\documentclass[a4paper, 11pt]{article}

\usepackage{mathrsfs,amssymb,amsmath, amsthm,color,tikz,epic}

\usepackage{multirow}

\usepackage[top=1.31in,bottom=1.29in,left=1.1in,right=0.9in]{geometry}
\usepackage[numbers]{natbib}
\usepackage[all]{xy}

\setcitestyle{open={},close={}}

\numberwithin{equation}{section}\theoremstyle{definition}
\swapnumbers

 \newtheorem{Theorem}[equation]{Theorem}
 \newtheorem{Prop}[equation]{Proposition}
 \newtheorem{Lemma}[equation]{Lemma}
 \newtheorem{Cor}[equation]{Corollary}
 \newtheorem{Notation}[equation]{Notation}
 \newtheorem{Defn}[equation]{Definition}
 \newtheorem{Example}[equation]{Example}
 \newtheorem{Remark}[equation]{Remark}

\newtheorem{Setup}[equation]{Set up}
\newtheorem{Facts}[equation]{Facts}
\newtheorem{Procedure}[equation]{Procedure}

\newtheorem{Summary}[equation]{Summary}

 \makeatletter
\def\enumerate{\begingroup\ifnum\@enumdepth>3\@toodeep\else
      \advance\@enumdepth\@ne
      \edef\@enumctr{enum\romannumeral\the\@enumdepth}%
      \topsep\z@\parskip\z@
      \list{\csname label\@enumctr\endcsname}
        {\@nmbrlisttrue\let\@listctr\@enumctr
         \parsep\z@\itemsep\z@\topsep\z@
         \setcounter{\@enumctr}{0}
         \def \fMakelabel##1{\hss\llap{\rm ##1}}
       }\fi}

 \makeatother

\def\fh{\mathfrak h}

\def\p{\mathfrak p}

\def\n{\mathfrak n}

\def\f{\mathfrak f}

\def\v{\mathfrak v}
\def\w{\mathfrak w}
\def\y{\mathfrak y}
\def\z{\mathfrak z}
\def\fc{\mathfrak c}
\def\fr{\mathfrak r}
\def\fa{\mathfrak a}
\def\fe{\mathfrak e}
\def\fri{\mathfrak i}

\def\bfn{\text{\bf n}}
\def\fI{\mathfrak I}

\def\fE{\mathfrak E}

\def\fS{\mathfrak S}

\def\fT{\mathfrak T}
\def\fTn{\fT^{\n}}

\def\C{\mathbb C}
\def\N{\mathbb N}

\def\F{\mathbb F}

\def\cO{\mathcal O}
\def\cH{\mathcal H}
\def\cB{\mathcal B}
\def\cT{\mathcal T}

\def\cR{\mathcal R}

\def\cI{\mathcal I}

\def\cN{\mathcal N}
\def\cC{\mathcal C}
\def\cZ{\mathcal Z}

\def\cS{\mathcal S}

\def\cK{\mathcal K}
\def\cA{\mathcal A}
\def\cW{\mathcal W}

\def\bfy{\text{\bf y}}
\def\bfw{\text{\bf w}}

\def\bbeta{\pmb{\Vec{\beta}}}
\def\xxi{\pmb{\Vec{\xi}}}

\def\bB{\cB^\circ}
\def\bC{\cC^\circ}

\def\hC{\hat\cC}
\def\hH{\hat\cH}
\def\hB{\hat\cB}

\def\bN{\cN^\circ}

\def\G{\Gamma}
\def\hG{\hat\Gamma}

\def\hL{\hat L}

\def\Pl{\mathbb{PL}}
\def\rPl{\mathbb{PL}_r}
\def\lPl{\mathbb{PL}_\ell}

\def\cObi{\cO^{\bi}}
\def\cOr{\cO^r}
\def\cOl{\cO^l}
\def\cOrl{\cO^{r\cap l}}

\def\V{\hat{V}}

\DeclareMathOperator{\Hom}{Hom}

\DeclareMathOperator{\col}{col}

\DeclareMathOperator{\tr}{tr}

\DeclareMathOperator{\End}{End}

\DeclareMathOperator{\rank}{rank}

\DeclareMathOperator{\main}{main}

\DeclareMathOperator{\nker}{nker}

\DeclareMathOperator{\suppl}{suppl}

\DeclareMathOperator{\bi}{bi}
\DeclareMathOperator{\im}{im}
\DeclareMathOperator{\supp}{supp}

\DeclareMathOperator{\Lie}{Lie}
\begin{document}


\title{Coadjoint orbits of Row  Closed Subgroups of $U_n(q)$}

\author{Qiong Guo$^{*}$, Richard Dipper$^{**}$\\ \\$^{*}${\footnotesize College of Sciences, Shanghai Institute of Technology} \\ {\footnotesize 201418 Shanghai, PR China}
\\ \scriptsize{E-mail:qiongguo@hotmail.com}\smallskip \\$^{**}$ {\footnotesize Institut f\"{u}r Algebra und Zahlentheorie}\\ {\footnotesize Universit\"{a}t Stuttgart, 70569 Stuttgart, Germany}
\\ \scriptsize{E-mail: richard.dipper@mathematik.uni-stuttgart.de}\\
\setcounter{footnote}{-1}\footnote{
{\scriptsize This work was  supported by NSFC no.\ 11601338}}
\setcounter{footnote}{-1}\footnote{\scriptsize\emph{2020 Mathematics Subject Classification.} Primary 20C15, 20D15. Secondary 20C33, 20D20 }
\setcounter{footnote}{-1}\footnote{\scriptsize\emph{Keywords.}  Row closed subgroups,  Coadjoint orbits, Supercharacter}}
\date{}


 \maketitle

\begin{abstract}
Let $n$ be a natural number, let $q$ be a prime power, and let $U_n(q)$ denote the group of unitriangular $n\times n$ matrices over the finite field $\F_q$ with $q$ elements. Row closed and column closed subgroups $U$ of $U_n(q)$ are special pattern subgroups obtained by deleting entire rows or, respectively, entire columns (apart from the diagonal entries). The supercharacters of the Andr\'{e}-Yan supercharacter theory of $U$ are afforded by the orbit modules arising from a monomial action of $U$ on the character group of the Lie algebra of $U$. We classify the corresponding orbits for row closed subgroups $U$, that is, we determine all orbits and identify which of the associated orbit modules are isomorphic and which afford orthogonal supercharacters. The classification for column closed subgroups follows from the mirror map, which reflects matrices across the antidiagonal.
\end{abstract}

\section{Introduction}  Determining the conjugacy classes of the finite groups $U_n(q)$ of upper unitriangular $n\times n$-matrices simultaneously for all prime powers $q$ and $n\in\N$ is known to be a wild problem in the categorical sense, see e.g. [\cite{vera1}], [\cite{vera}]. Thus, classifying the conjugacy classes and irreducible characters for $U_n(q)$ seems to be out of reach.  

 Andr\'{e} in [\cite{andre}], and later  Yan  in [\cite{yan thesis}, \cite{yan}], constructed characters of $U_n(q)$, called basic and transition characters, respectively, using a cruder version of Kirillov's orbit method [\cite{kirillov}]. These characters  provide an approximation to the classification problem of the irreducible characters of $U_n(q)$. Their approach was axiomatized by  Diaconis and  Isaacs in  [\cite{super}] through the notion of a supercharacter theory of a finite group $G$: Such a theory consists of a partition of $G$ into pairwise disjoint unions of conjugacy classes, called superclasses, and a set of pairwise orthogonal complex characters, called supercharacters, such that every irreducible complex character of $G$ occurs as a constituent of precisely one supercharacter. Moreover, the superclasses and supercharacters are in one-to-one correspondence, and the supercharacters are constant on the superclasses.

Diaconis and Isaacs extended the Andr\'{e}--Yan supercharacter theory to so-called algebra groups. Important examples of algebra groups are pattern subgroups of $U_n(q)$. These are defined by requiring the matrix entries in certain prescribed positions to vanish, subject to appropriate closure conditions; see \ref{startfacts} below. These conditions are satisfied, in particular, when the set of vanishing positions consists of all entries above the diagonal in a prescribed collection of rows or columns. The corresponding pattern subgroups $U$ of $U_n(q)$ are the row closed and column closed subgroups appearing in the title of this paper. The investigation of their Andr\'{e}--Yan supercharacters is the main theme of the present work.

One approach to constructing the Andr\'{e}--Yan supercharacters of pattern subgroups $U$ of $U_n(q)$ utilizes the natural left and right actions of $U$ on its Lie algebra $V = \Lie(U) = \{u-1\,|\,u\in U\}$. These actions induce left and right actions on the dual space $V^*$ and, in turn, a monomial biaction of $U$ on the set $\V$ of linear complex characters of the abelian group  
$(V,+)$. The resulting orbits may be viewed as finite analogues of Kirillov's
coadjoint orbits and are therefore also referred to as coadjoint
orbits.
The linearisation $\C V$ is isomorphic to the regular $\C U$-bimodule. Its one-sided orbit submodules afford the Andr\'{e}-Yan supercharacters of $U$. Consequently,  the characters afforded by any two such orbit modules are either equal or orthogonal. 

The aim of this paper is to apply this method to row closed  subgroups $U$ of $U_n(q)$ and to classify their coadjoint orbits under the monomial action on $\V$. This involves two problems. First, one must list the orbits, for example by finding distinguished elements of $\V$ that parametrize them. Second, one must determine which of the associated complex orbit modules are isomorphic and which afford orthogonal characters, that is, which have no irreducible constituents in common.

For column closed subgroups $U$, the classification of the right orbits of $U$ on $\V$ was carried out in [\cite{DG3}]. To extend these results to left orbits and also to row closed pattern subgroups, we use the mirror map obtained by reflecting $n\times n$ matrices across the antidiagonal. This map interchanges row closed and column closed patterns, as well as left and right orbits. Consequently, the classification of the left, respectively right, orbits of $U$ is equivalent to the classification of the right, respectively left, orbits of the reflected group.

We therefore concentrate on classifying the coadjoint orbits of row closed subgroups $U$ acting on $\V$. Applying the mirror map then immediately yields the classification of the right orbits and their orbit modules in the column closed case, thereby providing a new proof of the corresponding results of [\cite{DG3}]. The mirror map also yields classifications of the left orbits and biorbits in the column closed case, which were not previously known. Since this final step is straightforward, its details are left to the reader.

\section{Preliminaries}

We begin by fixing some basic notation for vectors and matrices over $\F_q$:

\begin{Notation}\label{colvects} 
Let $\cR$ and $\cC$ be finite ordered sets.
\begin{itemize}
\item[(1)] The $\F_q$-space of maps from $\cR$ to $\F_q$ (viewed as ordered tuples
$\bbeta=(\beta_r)=(\beta_r)_{r\in\cR}$ of elements $\beta_r\in\F_q$ indexed by $\cR$)
is denoted by $\F_q^{\cR}$. Similarly, $M_{\cR\times\cC}(q)$ denotes the $\F_q$-space of matrices whose entries are indexed by $\cR\times\cC$. We set
$M_\cR(q)=M_{\cR\times\cR}(q)$. For $l,k\in\N$, $\cR=\{1,\ldots,l\}$, and
$\cC=\{1,\ldots,k\}$, we replace the indices $\cR$ and $\cC$ simply by $l$ and $k$, respectively.

\item[(2)] Let $B \in M_{\cR\times\cC}(q)$. Then $\rank(B)$ denotes the rank of $B$. Let $i\in\cR$ and $j\in\cC$. We denote the $i$th row vector of $B$ by $\fr_i(B)\in\F_q^{\cC}$ and the $j$th column vector by $\fc_j(B)\in\F_q^{\cR}$. The entry of $B$ in position $(i,j)$ is denoted by $B_{ij}$, and the $(i,j)$th matrix unit of $M_{\cR\times\cC}(q)$ is denoted by $e_{ij}$. Thus
\[
\{e_{ij}\mid(i,j)\in\cR\times\cC\}
\]
is the natural basis of $M_{\cR\times\cC}(q)$. The dual basis of the dual space
\[
M_{\cR\times\cC}(q)^*
=
\Hom_{\F_q}(M_{\cR\times\cC}(q),\F_q)
\]
is given by the coordinate functions
\[
\varepsilon_{ij}=e_{ij}^*:B\longmapsto B_{ij},
\]
where $B\in M_{\cR\times\cC}(q)$ and $(i,j)\in\cR\times\cC$. For $A\in M_{\cR\times\cC}(q)$ we define 
\[
A^*=\sum_{(i,j)\in\cR\times\cC}A_{ij}\varepsilon_{ij}\in\Hom_{\F_q}(M_{\cR\times\cC}(q),\F_q).
\]
Thus 
\[ A^*:M_{\cR\times\cC}(q)\longrightarrow \F_q:B\longmapsto\sum_{(i,j)\in\cR\times\cC}A_{ij}B_{ij} = \tr(A^tB),
\]
where $\tr$ denotes the trace map on square matrices.

The transpose of matrix $B\in M_{\cR\times\cC}(q)$ is denoted by
\[
B^t\in M_{\cC\times\cR}(q),
\]
and if $\cR = \cC$ and $B$ is invertible, we abbreviate 
\[
B^{-t} = (B^{-1})^t = (B^t)^{-1}.
\]
For a row (respectively column) vector $\v\in\F_q^\cC$ (respectively $\v\in\F_q^\cR$), the corresponding column (respectively row) vector is denoted by $\v^t$. For $\v\in\F_q^\cR$ and $i\in\cR$, we denote the $i$th component of $\v$ by $\v_i$.

\item[(3)] We define the \textbf{support} of $B\in M_{\cR\times\cC}(q)$ by
\[
\supp(B)=\{(i,j)\mid i\in\cR,\; j\in\cC,\; B_{ij}\neq0\}.
\]
Thus,
\[
B=\sum_{{i\in\cR},\, {j\in\cC}}B_{ij}e_{ij}
 =\sum_{(i,j)\in\supp(B)}B_{ij}e_{ij}.
\]

\item[(4)] If $\cR'\subseteq\cR$, $\cC'\subseteq\cC$, $B\in M_{\cR\times\cC}(q)$, and
$Y\in M_{\cR'\times\cC'}(q)$, then
$B_{\cR'\times\cC'}$ denotes the restriction of $B$ to
$\cR'\times\cC'$ (viewing $B$ as a map from $\cR\times\cC$ to
$\F_q$). Moreover,
$Y^{\cR\times\cC}$ denotes the extension of $Y$ to
$\cR\times\cC$ by zeros. Thus
\[
B_{\cR'\times\cC'}
=
\sum_{(i,j)\in\cR'\times\cC'}B_{ij}e_{ij},
\qquad
Y^{\cR\times\cC}
=
\sum_{(i,j)\in\cR'\times\cC'}Y_{ij}e_{ij}
+
\sum_{(i,j)\in(\cR\times\cC)\setminus(\cR'\times\cC')}0\cdot e_{ij}.
\]
If no ambiguity arises, we henceforth omit the superscript
$\cR\times\cC$ and simply regard $Y$ as a matrix in
$M_{\cR\times\cC}(q)$ by setting
$Y_{ij}=0$ whenever
$(i,j)\in(\cR\times\cC)\setminus(\cR'\times\cC')$.
\item[(5)] For $\v\in\F_q^{\cR}$ and $\w\in\F_q^{\cR'}$, we use the analogous notation
$\v_{\cR'}\in\F_q^{\cR'}$ and $\w=\w^{\cR}\in\F_q^{\cR}$.

\hfill$\square$
\end{itemize}
\end{Notation}

Let $n$ be a natural number and let $\cR=\cC=\{1,\ldots,n\}$. Define
\[
\Phi^+=\{(i,j)\mid 1\le i<j\le n\},
\]
to be the set of \textbf{positive roots}. Then the group of upper unitriangular matrices in $M_n(q)$ is
\[
U_n(q)=\{A\in M_n(q)\mid \supp(A-E)\subseteq\Phi^+\},
\]
where $E=E_n$ denotes the $n\times n$ identity matrix
$
E=\sum_{1\le i\le n}e_{ii}.
$

We now give a brief account of the construction of supercharacters for pattern subgroups of the unitriangular group $U_n(q)$. For proofs and further details, we refer to [\cite{DG3}].

For any subset $J\subseteq\Phi^+$, we define
\[
V_J=\{A\in M_n(q)\mid\supp(A)\subseteq J\}
=\bigoplus_{(i,j)\in J}\F_qe_{ij}.
\]
The natural projection of $M_n(q)$ onto $V_J$ is denoted by $\pi_J$. Thus, for $A\in M_n(q)$,
\[
\pi_J(A)=\sum_{(i,j)\in J}A_{ij}e_{ij}.
\]

For $1\leq i,j\leq n$, $i\neq j$, and $\alpha\in\F_q$, define
\[
x_{ij}(\alpha)=E+\alpha e_{ij}\in M_n(q)
\]
and
\[
X_{ij}=\{x_{ij}(\alpha)\mid\alpha\in\F_q\}.
\]
Then $X_{ij}$ is a subgroup of the general linear group $GL_n(q)$, isomorphic to the additive group $(\F_q,+)$ of the field $\F_q$. It is called the \textbf{root subgroup} corresponding to the root $(i,j)\in\Phi^+$. Here we identify $\Phi^+$ with the set of positive roots of the root system $\Phi$ associated with $GL_n(q)$ and its Borel subgroup of upper triangular invertible matrices.

\begin{Defn}
A subset $J$ of $\Phi^+$ is said to be \textbf{closed} if $(i,j),(j,k)\in J$ implies $(i,k)\in J$.
\hfill$\square$
\end{Defn}

\begin{Facts}\label{startfacts}
The following facts are well known (see, e.g., [\cite{carter}]).
Let $J\subseteq\Phi^+$. Then
\[
U_J=\{A\in U_n(q)\mid\supp(A-E)\subseteq J\}
\]
is a subgroup of $U_n(q)$ if and only if $J$ is closed. In this case, we call $U_J$ a \textbf{pattern subgroup} of $U_n(q)$. Moreover, $U_J$ is generated by the root subgroups $X_{ij}$ with $(i,j)\in J$. Indeed, after fixing an arbitrary linear ordering of $J$, every element $u$ of $U_J$ can be written uniquely in the form
\begin{equation}\label{uniquelywritten}
u=\prod_{(i,j)\in J}x_{ij}(\alpha_{ij}).
\end{equation}
Here the elements $\alpha_{ij}\in\F_q$, for $(i,j)\in J$, are uniquely determined by $u$, provided that the product in \ref{uniquelywritten} is taken in the fixed ordering of $J$. In particular, $|U_J|=q^{|J|}$.
\hfill$\square$
\end{Facts}

Fix a closed subset $J$ of $\Phi^+$. We set
\[
V_J=\{A\in M_n(q)\mid\supp(A)\subseteq J\}
     =\{u-E\mid u\in U_J\}.
\]
We remark in passing that $V_J$ is the Lie algebra of $U_J$, although we shall not need this fact in the sequel. Note that $U_J$ acts on $V_J$ by matrix multiplication from the left and from the right. Hence $U_J$ acts on the dual space $V_J^*$ by
\begin{equation}
(u_1.\ell.u_2)(A)=\ell(u_1^{-1}Au_2^{-1}),
\qquad
\text{for }A\in V_J,\ \ell\in V_J^*,\ u_1,u_2\in U_J.
\end{equation}

For $A\in V_J$, we denote by $A^*$ the restriction  of
\[
\sum_{(i,j)\in J}A_{ij}\varepsilon_{ij}\in M_n(q)^*
\]
to $V_J$, abusing notation, if no confusion can arise. In [\cite{DG3}, 3.1] we proved the following.

\begin{Lemma}\label{dotstar}
Let $u_1,u_2\in U_J$ and $A\in V_J$. Then
\[
u_1.A^*.u_2
=
(u_1^{-t}Au_2^{-t})^*
=
\left(\pi_J(u_1^{-t}Au_2^{-t})\right)^*.
\]
This defines a bi-permutation action of $U_J$ on $V_J^*$.
\hfill$\square$
\end{Lemma}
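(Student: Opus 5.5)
The identity will be proved by evaluating both sides on an arbitrary $B\in V_J$ and moving everything into a trace. By the definition of the action, $(u_1.A^*.u_2)(B)=A^*(u_1^{-1}Bu_2^{-1})$. The first thing to check is that $u_1^{-1}Bu_2^{-1}$ actually lies in $V_J$. Write $u_1^{-1}Bu_2^{-1}=(u_1^{-1}(B+E)u_2^{-1})-u_1^{-1}u_2^{-1}$. Both products on the right lie in $U_J$ by Facts \ref{startfacts}, so their difference lies in $V_J$, since $V_J=\{u-E\mid u\in U_J\}$.

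Next, $A^*$ on $V_J$ is the restriction of $\sum_{(i,j)\in J}A_{ij}\varepsilon_{ij}$. Because $A\in V_J$ has $\supp(A)\subseteq J$, this equals the restriction of the full functional $C\mapsto\tr(A^tC)$ on $M_n(q)$, as in Notation \ref{colvects}(2). Cyclicity of the trace then gives
\[
\tr\bigl(A^t u_1^{-1}Bu_2^{-1}\bigr)=\tr\bigl(u_2^{-1}A^tu_1^{-1}B\bigr)=\tr\bigl((u_1^{-t}Au_2^{-t})^tB\bigr),
\]
so the two functionals $u_1.A^*.u_2$ and $(u_1^{-t}Au_2^{-t})^*$, the latter taken on all of $M_n(q)$ and then restricted, agree on $V_J$. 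Here $u_1^{-t}Au_2^{-t}$ is a lower-times-upper-times-lower product and need not lie in $V_J$. However, for any $M\in M_n(q)$ and $B\in V_J$ we have $\tr(M^tB)=\sum_{(i,j)\in J}M_{ij}B_{ij}=\tr(\pi_J(M)^tB)$. Hence the restriction of $M^*$ to $V_J$ equals $\pi_J(M)^*$, which gives the second equality.

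Finally, we need that this is a bi-permutation action. The formula $(u_1.\ell.u_2)(A)=\ell(u_1^{-1}Au_2^{-1})$ is a left action in $u_1$ and a right action in $u_2$. The two actions commute because matrix multiplication is associative, and each $u$ acts linearly and invertibly on $V_J^*$. The formula above shows that the basis-like set $\{A^*\mid A\in V_J\}$, which is all of $V_J^*$, is mapped to itself. So the action permutes the elements of $V_J^*$, which is what bi-permutation action means here.

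The only genuinely delicate point is the well-definedness step: that $u_1^{-1}Bu_2^{-1}\in V_J$ requires closedness of $J$, and it is handled by the $U_J$-coset argument in the first paragraph. Everything else is routine trace manipulation.
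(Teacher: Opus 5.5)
Your proof is correct and follows the natural route; the paper itself gives no proof here and only cites its earlier work, but the identity $A^*(B)=\tr(A^tB)$ recorded in Notation~\ref{colvects}(2) is set up for exactly this argument. You check $u_1^{-1}Bu_2^{-1}\in V_J$, use cyclicity of the trace to get $\tr\bigl((u_1^{-t}Au_2^{-t})^tB\bigr)$, and note that pairing with $B\in V_J$ only sees entries in $J$, which yields $\pi_J$. The remark about the ``basis-like set'' in your last paragraph is superfluous, since any pair of commuting left and right group actions on the set $V_J^*$ is automatically by permutations.
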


We remark that the matrix product $u_1^{-t}Au_2^{-t}$ in Lemma~\ref{dotstar} does not, in general, belong to $V_J$.

Next, we fix a nontrivial linear character
$
\theta:(\F_q,+)\to\C^* 
$
of the additive group of the field $\F_q$ into the multiplicative group $\C^* = \C\setminus\{0\}$ of the complex numbers. For any 
$\alpha\in\F_q$, we define
\begin{equation}\label{linchars}
\theta_\alpha:(\F_q,+)\longrightarrow\C^*:
\beta\longmapsto\theta(\alpha\beta)\in \C^*\text{ for }\beta\in\F_q.
\end{equation}
Then
$
\{\theta_\alpha\mid\alpha\in\F_q\}
$
is precisely the set of linear characters of the additive group $(\F_q,+)$.

For any $\ell\in V_J^*$, the composite map
$
\theta\circ\ell:V_J\to\C^*
$
is a linear character of the additive group $(V_J,+)$. Indeed, the map
$
\ell\longmapsto\theta\circ\ell
$
is a bijection from $V_J^*$ onto the group $\V_J$ of linear characters of the additive group $(V_J,+)$.

For $A\in V_J$, we henceforth denote the linear character $\theta\circ A^*$ by $[A]$. In the following, we speak of entries, columns, rows, etc. of matrices $A\in V_J$ and of characters $[A]\in\V_J$ alike.

In [\cite{DG3}], the linear character $[A]$ is also called a \textbf{lidempotent}, since, up to a power of $q$, it is an idempotent of the additive group
$V_J=\Lie(U_J)$ if we identify the group algebra $\C V_J$ with the space of maps from $V_J$ to $\C$.

For $u\in U_J$ and $A\in V_J$ we define $u.[A]=\theta\circ (u.A^*)$ and $[A].u=\theta\circ (A^*.u)$. Then this defines a permutation biaction of $U_J$ on $\V_J$, which may be turned by [\cite{DG3},3.14] into a monomial action setting 
\begin{eqnarray} \label{MonomialBiaction}
u[A]&=& \theta\big(A^*(u-E)\big)u.[A] \\
{[A]}u&=& \theta\big(A^*(u-E)\big)[A].u
\end{eqnarray}

\begin{Remark}\label{PermAction}
For most of this paper we investigate the coadjoint orbits and therefore deal only with the permutation action of $U_J$ on $\V_J$. Throughout, we use the notation introduced above, denoting the permutation action by a dot and the monomial action by juxtaposition.
Note that, for the study of coadjoint orbits $\cO\subseteq\V_J$, it suffices to consider the permutation action. Only when we consider the orbit modules $\C\cO$ do the monomial coefficients become relevant. Thus, throughout this paper, the notation $\C\cO$ always refers to the corresponding monomial $\C U_J$-module.
\end{Remark}

The following theorem is a special case of a more general result of Diaconis and Isaacs on $\F_q$-algebra groups from their fundamental paper [\cite{super}]. For details, we refer to [\cite{DG3}, 2.7].

\begin{Theorem}\label{supercharacters}
Write $U=U_J$. The left and right actions of $U$ on $\C\V_J$, with monomial basis $\V_J$ defined by (\ref{MonomialBiaction}), turn $\C\V_J$ into a $U$-bimodule. Moreover, the map
\[
f:\C\V_J\longrightarrow\C U:
[A]\longmapsto
\sum_{u\in U}\theta(A^*(u-E))u
=
\sum_{u\in U}
\left(
\prod_{(i,j)\in J}\theta(A_{ij}u_{ij})
\right)u
\]
is an isomorphism of $U$-bimodules onto the regular bimodule
$
{}_{\C U}\C U_{\C U}.
$
\hfill$\square$
\end{Theorem}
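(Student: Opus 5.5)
The plan is to prove both assertions at once: first show that $f$ is a linear bijection, then show that $f$ intertwines the operators $[A]\mapsto u[A]$ and $[A]\mapsto[A]u$ from (\ref{MonomialBiaction}) with left and right multiplication by $u$ in $\C U$. Once that is done, the bimodule axioms on $\C\V_J$ need no separate check. Indeed, if $f$ is bijective and $f(u[A])=uf([A])$ and $f([A]u)=f([A])u$ for all $u$ and $A$, then the operators on $\C\V_J$ are the transports of left and right multiplication on the regular bimodule. Hence they automatically satisfy the action laws and commute with each other.

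\emph{Bijectivity.} By Facts~\ref{startfacts}, the map $u\mapsto u-E$ is a bijection $U\to V_J$, and both sets have $q^{|J|}$ elements. Identify $\C U$ with the space of functions $U\to\C$ via $\sum_u c_u u\leftrightarrow(u\mapsto c_u)$. Under this identification, $f([A])$ is the function $u\mapsto[A](u-E)$, that is, the linear character $[A]$ of $(V_J,+)$ pulled back along this bijection. The map $A\mapsto[A]$ is a bijection onto $\V_J$. Distinct linear characters of the finite abelian group $(V_J,+)$ are orthogonal, hence linearly independent. So the $q^{|J|}$ elements $f([A])$ are linearly independent in $\C U$, which has dimension $q^{|J|}$. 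Therefore $f$ maps the basis $\V_J$ onto a basis, and $f$ is a linear isomorphism.

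\emph{Equivariance.} For the left action, substitute $w=uv$:
\[
uf([A])=\sum_{v\in U}\theta(A^*(v-E))\,uv=\sum_{w\in U}\theta\big(A^*(u^{-1}w-E)\big)\,w .
\]
Then split
\[
u^{-1}w-E=u^{-1}(w-E)+(u^{-1}-E).
\]
Both summands lie in $V_J$, since $V_J$ is stable under left multiplication by $U$ and $u^{-1}\in U$. Because $A^*$ is additive, the coefficient factors as
\[
\theta\big(A^*(u^{-1}(w-E))\big)\,\theta\big(A^*(u^{-1}-E)\big).
\]
By the definition of the dot action, $A^*(u^{-1}(w-E))=(u.A^*)(w-E)$. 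By Lemma~\ref{dotstar}, $u.A^*=B^*$ with $B=\pi_J(u^{-t}A)$. Hence
\[
uf([A])=\theta\big(A^*(u^{-1}-E)\big)\,f(u.[A]).
\]
The right action is handled symmetrically, using $wu^{-1}-E=(w-E)u^{-1}+(u^{-1}-E)$.

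\emph{Main obstacle.} The delicate step is matching the scalar $\theta(A^*(u^{-1}-E))$ produced above with the monomial coefficient written in (\ref{MonomialBiaction}). This requires care with the conventions of [\cite{DG3}, 3.14]: whether the coefficient is evaluated at $u$ or $u^{-1}$, and whether it is taken relative to $[A]$ or to $u.[A]$. The identity $u^{-1}-E=-u^{-1}(u-E)$ converts between these forms. It gives
\[
A^*(u^{-1}-E)=-(u.A^*)(u-E),
\]
so the scalar can be written equally well in terms of the translated character. I would fix the convention so that the displayed computation reproduces (\ref{MonomialBiaction}) exactly, and then verify that these scalars multiply correctly, i.e.\ form a cocycle. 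That cocycle property follows formally from equivariance together with the bijectivity of $f$.
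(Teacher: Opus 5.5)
\textbf{Overall.} Your argument is correct and self-contained. The paper gives no proof of this theorem; it only cites Diaconis--Isaacs and [DG3, 2.7]. Your direct verification is a valid replacement for that citation. It has three steps:
\begin{itemize}
\item[(a)] The pulled-back characters $u\mapsto\theta(A^*(u-E))$ form a basis of the functions on $U$, so $f$ is a linear bijection.
\item[(b)] The substitutions $w=uv$ and $w=vu$ give $uf([A])=\theta(A^*(u^{-1}-E))\,f(u.[A])$ and $f([A])u=\theta(A^*(u^{-1}-E))\,f([A].u)$.
\item[(c)] The bimodule axioms on $\C\V_J$ follow by transport of structure.
\end{itemize}
As you observe, (c) makes a separate cocycle check unnecessary.

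\textbf{The last paragraph needs changing.} The scalar is not a convention you can adjust to reproduce (\ref{MonomialBiaction}). Since $f$ is bijective, requiring $f(u[A])=uf([A])$ forces
\[
u[A]=f^{-1}\big(uf([A])\big)=\theta\big(A^*(u^{-1}-E)\big)\,u.[A]=\theta\big(-(u.A^*)(u-E)\big)\,u.[A],
\]
and likewise on the right.

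This agrees with the paper's Corollary~\ref{restricted action}: for $u=x_{ij}(\alpha)$ it gives the scalar $\theta(-\alpha A_{ij})$. It does not agree with (\ref{MonomialBiaction}) read literally. Take $n=2$ and $J=\{(1,2)\}$. Then $u.[A]=[A]$, and the printed display gives the scalar $\theta(\alpha A_{12})$. But $x_{12}(\alpha)f([A])=\theta(-\alpha A_{12})f([A])$, and these differ for suitable $\alpha$ whenever $q$ is odd and $A_{12}\neq0$.

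So no reading of the display as printed makes your computation match it. Instead, state the theorem for the monomial action with coefficient $\theta(A^*(u^{-1}-E))$ on both sides. This is the normalization actually used in Corollary~\ref{restricted action}. With that correction your proof is complete.
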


For root subgroup elements, the permutation action on $\V_J$ underlying the monomial action has a particularly simple description:

\begin{Cor}\label{restricted action}[\cite{DG3},3.14]
 For $(i,j)\in J\subseteq \Phi^+$ and $A\in V_J$, we have setting $\beta = A_{ij}$
\begin{itemize}
\item [(1)] $[A] x_{ij}(\alpha) =\theta(-\alpha A_{ij})[B] = \theta_{-\beta}(\alpha)[B]$, for $\alpha\in \F_q^*$. 
Here $B$ is obtained from $A$ by adding $-\alpha$ times column $j$ to column $i$ and then resetting all entries outside $J$ to zero. We call this ``{\bf restricted column operation}'' from right to left.
\item [(2)]  $ x_{ij}(\alpha) [A]=\theta (-\alpha A_{ij})[B]= \theta_{-\beta}(\alpha)[B]$,  for $\alpha\in \F_q^*$. 
Here $B$ is obtained from $A$ by adding $-\alpha$ times row $i$ to row $j$ and tand then setting all entries outside $J$ equal to zero. We call this ``{\bf restricted row operation}'' from top down. \hfill$\square$
\end{itemize}
\end{Cor}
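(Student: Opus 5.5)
The plan is to compute both ingredients of the monomial action \eqref{MonomialBiaction} for $u=x_{ij}(\alpha)$: the permutation part $[A].u$ (resp.\ $u.[A]$) via Lemma~\ref{dotstar}, and the scalar coefficient. For the scalar, I will not just read it off from \eqref{MonomialBiaction}. Instead I will recompute it by transporting the right regular action through the bimodule isomorphism $f$ of Theorem~\ref{supercharacters}, so that the sign convention ($-\alpha$ rather than $\alpha$) is forced by the requirement that $f$ be a bimodule map.

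\emph{Step 1 (permutation part).} We have $x_{ij}(\alpha)^{-1}=x_{ij}(-\alpha)$. Hence $u^{-t}=E-\alpha e_{ji}$. By Lemma~\ref{dotstar},
\[
[A].u=\theta\circ\bigl(\pi_J(A(E-\alpha e_{ji}))\bigr)^*.
\]
Now $A(E-\alpha e_{ji})=A-\alpha\,Ae_{ji}$, and $Ae_{ji}$ is the matrix whose $i$th column is $\fc_j(A)$ and whose other columns are zero. So this product is $A$ with $-\alpha$ times column $j$ added to column $i$. Applying $\pi_J$ resets all entries outside $J$ to zero, which gives exactly the matrix $B$ of (1). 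Symmetrically, $(E-\alpha e_{ji})A=A-\alpha\,e_{ji}A$ adds $-\alpha$ times row $i$ to row $j$, and after applying $\pi_J$ this is the $B$ of (2).

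\emph{Step 2 (coefficient).} Compute $f([A])\,u=\sum_{v}\theta(A^*(v-E))\,vu$ and substitute $w=vu$. Writing
\[
wu^{-1}-E=(w-E)u^{-1}+(u^{-1}-E),
\]
linearity of $A^*$ splits $\theta(A^*(wu^{-1}-E))$ into two factors:
\begin{itemize}
\item the first factor is $\theta\bigl(A^*((w-E)u^{-1})\bigr)=\theta\bigl((A^*.u)(w-E)\bigr)$, which is the coefficient of $w$ in $f([B])$;
\item the second factor is $\theta\bigl(A^*(u^{-1}-E)\bigr)=\theta(A^*(-\alpha e_{ij}))=\theta(-\alpha A_{ij})$, a scalar independent of $w$.
\end{itemize}
The first factor needs one check: $(A^*.u)(X)=A^*(Xu^{-1})$ for $X\in V_J$, so $f$ evaluates $A^*.u$ correctly on $w-E\in V_J$. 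This uses that $J$ is closed, so $(w-E)u^{-1}\in V_J$. Therefore $f([A])u=\theta(-\alpha A_{ij})f([B])$, and injectivity of $f$ gives (1). The left action is handled identically, using $u^{-1}w-E=u^{-1}(w-E)+(u^{-1}-E)$, and gives (2). Finally, $\theta(-\alpha\beta)=\theta_{-\beta}(\alpha)$ by \eqref{linchars}.

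The only delicate point is Step 2: reconciling the sign of the coefficient with \eqref{MonomialBiaction}. I expect the main obstacle to be keeping track of whether the monomial factor is evaluated at $u$ or at $u^{-1}$. Deriving it directly from $f$, as above, settles this unambiguously.
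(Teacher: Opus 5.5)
Your proof is correct, and it differs from the paper's intended derivation in how the scalar is obtained. The paper gives no argument here beyond citing [DG3, 3.14]. The intended derivation is your Step~1 for the permutation part, via Lemma~\ref{dotstar} with $u^{-t}=E-\alpha e_{ji}$, together with reading the scalar directly off (\ref{MonomialBiaction}) at $u=x_{ij}(\alpha)$.

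Your Step~2 instead derives the scalar by transporting right and left multiplication in $\C U$ through $f$. This is worth doing, because a literal reading of (\ref{MonomialBiaction}) gives $\theta(A^*(u-E))=\theta(\alpha A_{ij})$, the opposite sign to the statement.

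Your computation shows that the coefficient compatible with $f$ is $\theta(A^*(u^{-1}-E))$, which specializes to $\theta(-\alpha A_{ij})$. Unlike the printed version, this coefficient satisfies the cocycle identity needed for an action. With $u-E$ in its place, the identity $[A](uv)=([A]u)v$ already fails in $U_3(q)$ for $u=x_{12}(1)$ and $v=x_{23}(1)$: the two sides differ by the factor $\theta(A_{13})$. So you are implicitly reading Theorem~\ref{supercharacters} as saying that the monomial action is the one transported via $f$, which is the right reading.

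The comparison, then: the direct substitution is a one-liner but is only valid once the displayed formula is corrected to $u^{-1}-E$. Your route costs a few more lines but fixes the sign without ambiguity. It uses only that $f$ is an injective bimodule map and that $V_J$ is stable under multiplication by $U_J$.
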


In the following, we represent linear characters $[A]\in\V_J$ graphically by upper triangular matrices, omitting the entries below and on the diagonal. The remaining entries are to be interpreted as the coefficients of the coordinate functions 
$\varepsilon_{ij}$ followed by $\theta$. Moreover, the indices along the diagonal indicate both the row coordinates (to the right) and the column coordinates (upwards). Indices at the top of the triangle always indicate column coordinates. The permutation action of $x_{ij}(\alpha)$, for $(i,j)\in J$ and $\alpha\in\F_q$, on $[A]\in\V_J$ is illustrated as follows:

\begin{equation}\label{permaction}
\begin{picture}(320,140)


\put(-45,85){\makebox{right action:}}

\put(0,120){\line(1,0){120}}
\put(120,0){\line(0,1){120}}
\put(0,120){\line(1,-1){120}}

\put(90,30){\circle*{3}} 
\put(83,26){\makebox{$j$}}
\put(88,123){\makebox{$j$}}
\dottedline{.2}(90,120)(90,30)

\put(53,56){\makebox{$i$}}
\put(60,60){\circle*{3}} 
\put(58,123){\makebox{$i$}}
\dottedline{.2}(60,30)(60,120)

\multiput(55,50)(0,-8){4}{\line(4,3){10}}

\put(40,42){\vector(2,-1){12}}

\put(88,130){\line(0,1){10}}
\put(88,140){\line(-1,0){20}}
\put(68,140){\vector(-1,-2){5}}
\put(70,142){\makebox{$-\alpha$}}

\put(0,45){\makebox{set to zero}}


\put(165,85){\makebox{left action:}}

\put(200,120){\line(1,0){120}}
\put(320,0){\line(0,1){120}}
\put(200,120){\line(1,-1){120}}

\put(290,30){\circle*{3}}
\put(283,21){\makebox{$j$}}

\dottedline{.2}(260,30)(320,30)

\put(253,56){\makebox{$i$}}
\put(260,60){\circle*{3}} 
\dottedline{.2}(260,60)(320,60)

\put(325,60){\line(1,0){10}}
\put(335,60){\line(0,-1){25}}
\put(335,35){\vector(-2,-1){10}}
\put(338,45){\makebox{$-\alpha$}}

\put(215,42){\makebox{set to zero}}
\multiput(257,25)(8,0){4}{\line(3,4){10}}
\put(240,39){\vector(2,-1){12}}

\end{picture}
\end{equation}

\bigskip

For $A\in V_J$, we denote by $\cO_A^l$ and $\cO_A^r$ the left and right orbits of $\V_J$ generated by $[A]$, respectively, and by $\cO_A^{\bi}$ the corresponding biorbit. The characters afforded by these orbit modules are precisely the supercharacters of $U_J$ introduced by Diaconis and Isaacs in [\cite{super}], thereby generalizing the Andr\'e--Yan supercharacters for the special case $J=\Phi^+$ constructed in [\cite{andre}] and [\cite{yan}].

\begin{Remark}\label{basicLA}
Let $K$ be a field, $W$ a $K$-vector space with basis $Z$, and let $X,Y\subseteq Z$. Then basic linear algebra immediately yields
$
KX\cap KY = K(X\cap Y).
$
\hfill$\square$
\end{Remark}

\begin{Remark}{[\cite{DG3}, 3.16]} \label{HomSpace}
Recall the definition of the $\,\C U_J$-bimodule isomorphism
$f:\C\V_J\rightarrow\C U_J$ from Theorem~\ref{supercharacters}.
Observe that $f(\C\cO_A^l)=\C U_Jf([A])$ is a left ideal, $f(\C\cO_A^r)=f([A])\C U_J$ a right ideal, and 
$f(\C\cO_A^{\bi})=\C U_Jf([A])\C U_J$ a two-sided ideal of $\C U_J$. For $A,B\in V_J$, we have
\begin{eqnarray*}
\Hom_{\C U_J}\big(\C\cO_A^r,\C\cO_B^r\big)
&\cong&
\Hom_{\C U_J}\big(f([A])\C U_J,f([B])\C U_J\big)\\
&\cong&
\big(\C U_Jf([A])\big)\cap\big(f([B])\C U_J\big)\\
&\cong&
f(\C U_J[A])\cap f([B]\C U_J)\\
&\cong&
\C\cO_A^l\cap\C\cO_B^r\\
&\cong&
\C(\cO_A^l\cap\cO_B^r)
\qquad\text{by Remark~\ref{basicLA}}\\
&\cong&
\Hom_{\C U_J}\big(\C\cO_B^l,\C\cO_A^l\big),
\end{eqnarray*}
since $\C U_J$ is self-injective.

More precisely, if $[C]\in\cO_A^l\cap\cO_B^r$, then there exist $0\neq \lambda,\mu \in\C^*$ and  $x,y\in U_J$ such  that
\[
[C]=\lambda x[A]=\mu[B]y.
\]
The corresponding $\C U_J$-homomorphism
$
\C\cO_A^r\to\C\cO_B^r
$
is given by left multiplication by $\lambda x$. Since
$\cO_C^r=\cO_B^r$, this homomorphism is surjective, and it is injective because
$\lambda x$ is invertible. Similarly, right multiplication by $\mu y$
defines a $\C U_J$-isomorphism
$
\C\cO_B^l\to\C\cO_A^l,
$
which depends only on $[C]$. Moreover, $x$ and $y$ are unique up to multiplication by elements of the left stabilizer of $[A]$ and the right stabilizer of $[B]$, respectively.

This recovers, for the special case of pattern groups, the well-known fact (see, for example, [\cite{super}]) that the Andr\'e--Yan supercharacters are either equal or orthogonal. Finally, taking $A=B$, we obtain
\[
\End_{\C U_J}(\C\cO_A^r)
\cong
\End_{\C U_J}(\C\cO_A^l)
\]
as algebras with basis $\cO_A^l\cap\cO_A^r$.
\hfill$\square$
\end{Remark}

\begin{Cor}\label{iso orbits} Let $[A],[B]\in\V$. Then 
\[ 
\C\cOl_A\cong\C\cOl_B \text{ and } \C\cOr_A\cong\C\cOr_B\quad 
\text {if and only if } \cObi_A=\cObi_B.
\] 
Consequently, distinct biorbits of $\V$ afford orthogonal characters. 
\end{Cor}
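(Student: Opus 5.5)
The plan is to derive both directions from Remark~\ref{HomSpace}, which identifies $\Hom_{\C U_J}(\C\cO_A^r,\C\cO_B^r)$ with $\C(\cO_A^l\cap\cO_B^r)$. The same space is also isomorphic to $\Hom_{\C U_J}(\C\cO_B^l,\C\cO_A^l)$. The basic observation is that $\cO_A^l\cap\cO_B^r\neq\emptyset$ forces $\cObi_A=\cObi_B$. Indeed, if $[C]$ lies in the intersection, then $[C]$ is in the left orbit of $[A]$ and in the right orbit of $[B]$. Both orbits lie inside a single biorbit, namely that of $[C]$. Hence $\cObi_A=\cObi_C=\cObi_B$.

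For the direction ``if'', suppose $\cObi_A=\cObi_B$. Then $[B]=x.[A].y$ for some $x,y\in U_J$, using that the left and right permutation actions commute by Lemma~\ref{dotstar}. Set $[C]=x.[A]=[B].y^{-1}$. Then $[C]\in\cO_A^l\cap\cO_B^r$. By the ``more precisely'' part of Remark~\ref{HomSpace}, left multiplication by $\lambda x$ is an isomorphism $\C\cO_A^r\to\C\cO_B^r$, and right multiplication by $\mu y^{-1}$ is an isomorphism $\C\cO_B^l\to\C\cO_A^l$. Both required isomorphisms therefore hold.

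For ``only if'', it suffices to assume $\C\cO_A^r\cong\C\cO_B^r$; only one of the two isomorphisms is needed. An isomorphism is a nonzero homomorphism, because $\C\cO_A^r\neq0$. So $\C(\cO_A^l\cap\cO_B^r)\neq0$, and the intersection is nonempty. By the basic observation, $\cObi_A=\cObi_B$. If instead one assumes the left modules are isomorphic, the last isomorphism in Remark~\ref{HomSpace} gives $\cO_A^l\cap\cO_B^r\neq\emptyset$ in the same way.

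The orthogonality statement follows by the same argument, read contrapositively. If $\cObi_A\neq\cObi_B$, then $\cO_A^l\cap\cO_B^r=\emptyset$. Hence $\Hom_{\C U_J}(\C\cO_A^r,\C\cO_B^r)=0$, and similarly for the left modules. Since $\C U_J$ is semisimple, a zero Hom space means the two modules have no common irreducible constituent. So their characters are orthogonal.

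The character of a biorbit module $\C\cObi_A$ is a sum of characters of right orbit modules $\C\cO_{C}^r$ with $[C]\in\cObi_A$. Any two such summands, one from $\cObi_A$ and one from $\cObi_B$, are orthogonal by the previous paragraph. Therefore the biorbit characters are orthogonal as well.

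The only point needing care is the monomial scalars $\lambda,\mu$. These cause no difficulty, because Remark~\ref{HomSpace} already works in the monomial setting and orbits are sets of basis elements up to scalars. I expect no real obstacle beyond correctly invoking the commuting bi-action to see that left and right orbits sit inside a common biorbit.
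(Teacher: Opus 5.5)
Your proof is correct and follows essentially the paper's route: both directions rest on Remark~\ref{HomSpace}. In the ``if'' direction the isomorphisms are given by multiplication by a group element, which is exactly the mechanism behind the paper's observation $\cOl_A=\cOl_B.v$. Your ``only if'' step, which concludes $\cO_A^l\cap\cO_B^r\neq\emptyset$, is also the accurate form of the inference; the paper's wording that $u.[A]=[B]$ for some $u\in U$ is looser than what Remark~\ref{HomSpace} actually gives.
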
 
\begin{proof} Suppose that $\C\cOl_A\cong\C\cOl_B$. Then, by Remark~\ref{HomSpace}, there exists $u\in U$ such that $ u.[A]=[B]$, and hence $\cObi_A=\cObi_B$. Conversely, suppose that $\cObi_A=\cObi_B$. Then there exist $u,v\in U$ such that $ [A]=u.[B].v$, and therefore 
\[ 
\cOl_A = U.[A] = U.[B].v = \cOl_B.v. 
\] 
Hence right multiplication by $v$ induces an isomorphism from $\C\cOl_B$ onto $\C\cOl_A$. The corresponding statement for right orbits is proved analogously.  
\end{proof}

For later use we summarize some further facts about the orbits of the action of
$U_J$ on $\V_J$, which can be found in
[\cite{super}], [\cite{yan}], and [\cite{DG3}].

\begin{Facts}\label{gen orbit facts}
Let $[A],[B]\in\V_J$. Then the following hold.
\begin{enumerate}
\item[(1)]
Either the complex orbit modules $\C\cOr_A$ and $\C\cOr_B$ are isomorphic, or they have no irreducible constituent in common. The analogous statement holds for left orbit modules.

\item[(2)]
The one-sided orbits $\cOl_A$ and $\cOr_A$ have the same cardinality.

\item[(3)]
By a basic set-theoretic argument 
\[
|\cObi_A|
=
\frac{|\cOr_A||\cOl_A|}{|\cOr_A\cap\cOl_A|}
=
\frac{|\cOr_A|^2}{|\cOr_A\cap\cOl_A|}
=
\frac{|\cOl_A|^2}{|\cOr_A\cap\cOl_A|}.
\]
Therefore, using \ref{iso orbits}, we have 
\[
\C\cObi_A
\cong
\bigoplus_{\kappa\text{ copies}}\C\cOr_A
\cong
\bigoplus_{\kappa\text{ copies}}\C\cOl_A,
\]
where 
$\kappa=\frac{|\cOr_A|}{|\cOr_A\cap\cOl_A|} = \frac{|\cOl_A|}{|\cOr_A\cap\cOl_A|}$.
\hfill$\square$
\end{enumerate}
\end{Facts}

We now define the main objects of investigation in this paper.

\begin{Defn}
A subset $J$ of $\Phi^+$ is called \textbf{row closed} (respectively \textbf{column closed}) if
$(i,j)\in J$ implies $(i,l)\in J$ for all $i<l\le n$
(respectively $(k,j)\in J$ for all $1\le k<j$).
Equivalently, a row closed (respectively column closed) subset is obtained from $\Phi^+$ by removing all positions belonging to some rows (respectively some columns). \hfill $\square$
\end{Defn}

It is easy to see that every row closed or column closed subset of $\Phi^+$ is closed. The corresponding pattern subgroups $U_J$ are also called \textbf{row closed} or \textbf{column closed}, respectively.

\section{Switching Sides}

In this brief section we show that the problem of classifying the right orbit modules of row (respectively column) closed pattern subgroups is equivalent to the corresponding problem for the left orbit modules of column (respectively row) closed pattern subgroups. The key observation is that reflection at the anti-diagonal interchanges row closed and column closed patterns and reverses the order of matrix multiplication.

More precisely, define 
\[
i\longmapsto\bar i=n+1-i \text{ \,for\, } 1\le i\le n.
\]
This reflects the ordered set $\{1,\ldots,n\}$ in the midpoint $\frac{n+1}{2}$. Clearly,
\[
\bar{\bar i}=i
\qquad\text{and}\qquad
i\le j \Longleftrightarrow \bar j\le \bar i.
\]
Now define the mirror map
\begin{equation}\label{mirror}
\bar{\phantom{A}}:M_n(q)\longrightarrow M_n(q):
A\longmapsto \overline A=
\sum_{1\le i,j\le n}A_{ij}e_{\bar j\,\bar i}\quad \text{ for\, } A\in M_n(q).
\end{equation}
Let $J\subseteq\Phi^+$ be closed. Then
$
\bar J=\{(\bar j,\bar i)\mid (i,j)\in J\}\subseteq\Phi^+
$
is closed. Moreover, we have
\[
\overline{U_J}=U_{\bar J}, \quad\text{and}\quad
\overline{V_J}=V_{\bar J}.
\]

Let $A,B\in M_n(q)$ and $J\subseteq\Phi^+$.
Observe that
\[
\overline{AB}=\overline B\,\overline A,\quad
\overline{\pi_J(A)}=\pi_{\bar J}(\overline A)\quad\text{and}\quad
\overline{A^t}=\overline A^{\,t}.
\]
The mirror map extends naturally to $M_n(q)^*$ by setting
$
\overline{A^*}=\overline A^{\,*}.
$
Composing the mirror map with $\theta$ therefore yields a bijection
\[
\V_J\longrightarrow\V_{\bar J}:\overline{[A]}\longmapsto[\bar A]\in\V_{\bar J} \text{ for } [A]\in\V_J.
\]

Applying the mirror map to Lemma~\ref{dotstar}, we obtain, for
$u_1,u_2\in U_J$ and $A\in V_J$,
\begin{equation}
u_1.A^*.u_2
=
\overline{\overline{u_1.A^*.u_2}}
=
\overline{\bigl(\pi_{\bar J}(\bar u_2^{-t}\,\bar A\,\bar u_1^{-t})\bigr)^*}.
\end{equation}
Consequently,
\begin{equation}
u_1.[A].u_2
=
\overline{\bar u_2.[\bar A].\bar u_1}
=
\overline{[\bar u_2^{-t}\,\bar A\,\bar u_1^{-t}]}.
\end{equation}

We also observe that, by definition,
$
\overline{A}_{\bar j\bar i}=A_{ij}
$
for every $A\in V_J$. Let $(i,j)\in J$ and $\beta\in\F_q$. Applying the mirror map to Corollary~\ref{restricted action}, we obtain
\[
\overline{[A]x_{ij}(\beta)}
=
\theta(-\beta A_{ij})\,
\overline{[A].x_{ij}(\beta)}
=
\theta(-\beta\overline{A}_{\bar j\bar i})\,
x_{\bar j\bar i}(\beta).\overline{[A]}
=
x_{\bar j\bar i}(\beta)\overline{[A]}.
\]
Similarly,
\[
\overline{x_{ij}(\beta)[A]}
=
\overline{[A]}x_{\bar j\bar i}(\beta),
\]
for every $A\in V_J$.

Since every element of $U_J$ can be written as a product of root subgroup elements, we conclude:

\begin{Lemma}\label{mirrorlemma}
Let $J\subseteq\Phi^+$ be closed,  and let
$A\in V_J$. Then
$
\overline{u_1[A]u_2}
=
\bar u_2[\bar A]\bar u_1,
$ for all $u_1,u_2\in U_J$.
\hfill$\square$
\end{Lemma}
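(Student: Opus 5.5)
The plan is to reduce to root subgroup elements and then extend multiplicatively. By Facts~\ref{startfacts}, every $u\in U_J$ is a product of elements $x_{ij}(\alpha)$ with $(i,j)\in J$. The mirror map is anti-multiplicative: $\overline{uv}=\bar v\,\bar u$, and $\overline{x_{ij}(\alpha)}=x_{\bar j\bar i}(\alpha)$. The map $\C\V_J\to\C\V_{\bar J}$, $[A]\mapsto[\bar A]$, extended $\C$-linearly, is a bijection on monomial bases. So it suffices to prove two things. First, the identity holds when $u_1$ or $u_2$ is a single root element. Second, identities of the form $\overline{u[A]}=\overline{[A]}\,\bar u$ and $\overline{[A]u}=\bar u\,\overline{[A]}$ are compatible with products.

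For the root element case, I would use Corollary~\ref{restricted action}. For $\alpha\neq 0$ we have $[A]x_{ij}(\alpha)=\theta(-\alpha A_{ij})[B]$. Here $B$ comes from $A$ by adding $-\alpha$ times column $j$ to column $i$ and then applying $\pi_J$.

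Next I apply the mirror map. Column $j$ and column $i$ of $A$ become rows $\bar j$ and $\bar i$ of $\bar A$. The operation therefore becomes adding $-\alpha$ times row $\bar j$ to row $\bar i$ of $\bar A$. Since $\overline{\pi_J(\cdot)}=\pi_{\bar J}(\overline{\,\cdot\,})$, this is followed by truncation to $\bar J$. Because $(\bar j,\bar i)\in\bar J$ with $\bar j<\bar i$, this is exactly the restricted row operation from top down describing $x_{\bar j\bar i}(\alpha).[\bar A]$ in part (2) of the corollary.

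The scalar $\theta(-\alpha A_{ij})$ equals $\theta(-\alpha\bar A_{\bar j\bar i})$, which is the monomial coefficient for the left action of $x_{\bar j\bar i}(\alpha)$ on $[\bar A]$. This gives $\overline{[A]x_{ij}(\alpha)}=x_{\bar j\bar i}(\alpha)[\bar A]$. The left-action case is symmetric, and $\alpha=0$ is trivial. This is exactly the computation sketched just before the lemma.

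For the extension step, note that the action is a monomial biaction, so $u(v[A])=(uv)[A]$, and scalars pass through the linear extension of the mirror map. Inducting on the number of root factors then gives
\[
\overline{(uv)[A]}=\overline{u(v[A])}=\overline{v[A]}\,\bar u=\overline{[A]}\,\bar v\,\bar u=\overline{[A]}\,\overline{uv}.
\]
The same argument works on the right. Finally, left and right monomial actions commute, since $\C\V_J$ is a bimodule by Theorem~\ref{supercharacters}. Hence $\overline{u_1[A]u_2}=\overline{u_1([A]u_2)}=\overline{[A]u_2}\,\bar u_1=\bar u_2[\bar A]\bar u_1$.

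The main obstacle I expect is the bookkeeping in the root element step. I need to check that the mirror of "column $j$ into column $i$" really lands as "row $\bar j$ into row $\bar i$" in the correct direction, matching the top-down convention of Corollary~\ref{restricted action}(2). I also need the monomial scalar to match exactly, rather than differing by a sign such as $\theta(\alpha\cdot)$ versus $\theta(-\alpha\cdot)$. If the scalars are in doubt, I would instead verify the coefficient directly from the definition (\ref{MonomialBiaction}) as $\theta(A^*(u-E))$. The trace-form identity $A^*(u-E)=\bar A^*(\bar u-E)$ holds because $\overline{A}_{\bar j\bar i}=A_{ij}$. With that identity, the scalar agreement follows for arbitrary $u$, not only for root elements.
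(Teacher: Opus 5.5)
Your proposal is correct and follows essentially the paper's own argument: you check the identity on root subgroup elements using Corollary~\ref{restricted action} together with $\overline{A}_{\bar j\bar i}=A_{ij}$ and $\overline{x_{ij}(\alpha)}=x_{\bar j\bar i}(\alpha)$, and then extend to all of $U_J$ because $U_J$ is generated by root subgroups. Your explicit induction via $\overline{uv}=\bar v\,\bar u$ and the commuting left and right actions from Theorem~\ref{supercharacters} spell out the extension step the paper leaves implicit, and your fallback scalar check via $A^*(B)=\bar A^{\,*}(\bar B)$ guards against sign conventions, since the identity holds for any consistent convention.
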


\begin{Prop}\label{switch}
Let $J\subseteq\Phi^+$ be closed. Then classifying the left (respectively right) $U_J$-orbits on $\V_J$ and the characters afforded by the corresponding orbit modules is equivalent to classifying the right (respectively left) $U_{\bar J}$-orbits on $\V_{\bar J}$ and the characters they afford.
\hfill$\square$
\end{Prop}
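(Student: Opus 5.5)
The plan is to show that the mirror map is a bijection $\V_J\to\V_{\bar J}$ which intertwines the left monomial action of $U_J$ with the right monomial action of $U_{\bar J}$, and vice versa. All orbit data then transfer verbatim.

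First, $A\mapsto\bar A$ is an involutive $\F_q$-linear bijection $V_J\to V_{\bar J}$, so $[A]\mapsto[\bar A]$ is a bijection $\V_J\to\V_{\bar J}$. Likewise $u\mapsto\bar u$ is a bijection $U_J\to U_{\bar J}$, and it is an anti-isomorphism because $\overline{uv}=\bar v\,\bar u$. Lemma~\ref{mirrorlemma} gives $\overline{u_1[A]u_2}=\bar u_2[\bar A]\bar u_1$. Forgetting the scalars, the same identity holds for the permutation actions: $\overline{u_1.[A].u_2}=\bar u_2.[\bar A].\bar u_1$. Taking $u_2=E$ and letting $u_1$ range over $U_J$ gives
\[
\overline{\cO^l_A}=\cO^r_{\bar A}\quad\text{and, symmetrically,}\quad \overline{\cO^r_A}=\cO^l_{\bar A}.
\]
Hence the mirror map induces a bijection between the left $U_J$-orbits on $\V_J$ and the right $U_{\bar J}$-orbits on $\V_{\bar J}$. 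Under this bijection, distinguished orbit representatives correspond and orbit sizes are preserved.

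Next I would transfer the module-theoretic data. Extend the mirror map $\C$-linearly to $\C\V_J\to\C\V_{\bar J}$ and $\C U_J\to\C U_{\bar J}$; the latter is an algebra anti-isomorphism. By Lemma~\ref{mirrorlemma}, including the monomial coefficients, a left $\C U_J$-module $\C\cO$ is carried to the right $\C U_{\bar J}$-module $\C\bar\cO$, with $x\cdot m$ corresponding to $\bar m\cdot\bar x$. Consequently isomorphisms between left orbit modules correspond to isomorphisms between the mirrored right orbit modules. For a module-free check, I would use Remark~\ref{HomSpace}: the Hom spaces between right orbit modules are parametrized by $\cO^l_A\cap\cO^r_B$, and the mirror maps this set onto $\cO^r_{\bar A}\cap\cO^l_{\bar B}$, which parametrizes the corresponding Hom spaces on the other side. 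In particular, isomorphism and orthogonality of the afforded characters are preserved. Equivalently, by Corollary~\ref{iso orbits}, it is enough to note that biorbits are mapped to biorbits, since $\overline{\cO^{\bi}_A}=\cO^{\bi}_{\bar A}$.

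The main obstacle is the claim that the afforded characters correspond, and not merely the orbits. The point is that the character of the left module $\C\cO$ at $u$ should equal the character of the right module $\C\bar\cO$ at $\bar u$. Since both modules are monomial, I would check this by comparing diagonal monomial coefficients, which Lemma~\ref{mirrorlemma} matches exactly. Under the anti-isomorphism, one must also verify that $\bar u$ and $u$ play corresponding roles in the trace, which is clear because traces of monomial matrices only involve fixed basis elements. With this, equality and orthogonality statements on one side are equivalent to those on the other, and the proposition follows.
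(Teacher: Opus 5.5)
Your proposal is correct and takes essentially the paper's approach. The paper states the proposition without a separate proof, as an immediate consequence of Lemma~\ref{mirrorlemma}. Your argument spells that out: the mirror map is a bijection $\V_J\to\V_{\bar J}$ that intertwines the left (monomial) action of $U_J$ with the right action of $U_{\bar J}$ via the anti-isomorphism $u\mapsto\bar u$, and so it carries orbits, orbit modules and their characters (or equivalently Hom spaces and biorbits) across.
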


Let $J$ be row closed. By Proposition~\ref{switch},
the classification of the left $U_J$-orbits on $\V_J$ is equivalent to the
classification of the right $U_{\bar J}$-orbits on $\V_{\bar J}$, where
$\bar J$ is column closed. We may therefore apply the results of
[\cite{DG3}] and translate them, via the mirror map, into corresponding
results for the left orbits of $U_J$.

As will become apparent later, the classification of the left orbits of
$U_J$ on $\V_J$ is the key step in the classification of the right orbits. Furthermore, 
applying the mirror map once more, one obtains then the corresponding results for
the left orbits of column closed pattern subgroups of $U_n(q)$. 

Throughout
the remainder of the paper, we therefore assume that $J\subseteq\Phi^+$ is
row closed and study the action of $U_J$ on $\V_J$ from both sides.

\section{Left Orbits}

In [\cite{DG3}] we classified the right orbits of column closed pattern groups by means of so-called templates and determined which of the corresponding orbit modules are isomorphic and which afford orthogonal characters. In this section we transfer the basic framework of these results to row closed pattern groups using the mirror map of Section~\ref{mirror}. The reader may verify that the notions introduced here for left orbits are precisely the images, under the mirror map, of the corresponding constructions for right orbits of column closed pattern groups developed in Section~4 of [\cite{DG3}]. Since it will always be clear from the context whether we are considering the left action of a row closed pattern group or the right action of a column closed pattern group, we use the same notation in both settings.

\medskip

Henceforth we fix a row closed subset $J\subseteq\Phi^+$ and write $U=U_J$. To simplify notation, we omit the index ``$J$'' whenever no ambiguity can arise.

\begin{Defn} \label{normal} Let $1\leq j\leq n$. The column $j$, and all positions on it, are called $J$-\textbf{normal} or simply \textbf{normal}, if row $j$ is not contained in $J$. For $[A] \in\V$, the submatrix of $[A]$ consisting of all normal columns is called \textbf{normal kernel} of $[A]$ and is denoted by $\nker[A]$. 
\hfill$\square$
\end{Defn}

\begin{Remark}\label{normalfixedfromright} 
Let $(i,j)\in J$ be a normal position. Then Corollary~\ref{restricted action}
immediately implies that, for every $\alpha\in\F_q$, the root subgroup element
$x_{ij}(\alpha)$ acts from the left on each $[A]\in\V$ by the scalar
$\theta_{-\beta}(\alpha)$, where $\beta=A_{ij}$.

\begin{equation}\label{normalaction}
\begin{picture}(160,140)
\put(130,120){\line(0,-1){120}}
\put(130,120){\line(-1,0){120}}
\put(10,120){\line(1,-1){120}}

\put(50,66){\makebox{$i$}}
\put(60,70){\circle*{3}} 

\dottedline[$-$]{9}(110,70)(126,70)
\dottedline[$-$]{9}(61,70)(90,70)
\put(-15,130){\makebox{normal column $j$}}
\put(25,10){\makebox{row $j$ not in $J$}}
\put(95,13){\vector(1,1){15}}
\put(68,130){\vector(1,-1){28}}

\put(100,30){\circle*{3}} 
\put(90,26){\makebox{$j$}}

\dottedline{.2}(101,30)(130,30)
\multiput(99,26)(6,0){5}{\line(4,3){10}}

\dottedline{.2}(100,30)(100,60)
\dottedline{.2}(100,75)(100,120)

\put(96,65){\makebox{$\beta$}}

\put(147,70){\line(-1,0){15}}
\put(146,70){\line(0,-1){24}}
\put(146,46){\vector(-1,-1){12}}
\put(148,52){\makebox{$-\alpha$}}

\end{picture}
\end{equation}

Normality of columns has a different consequence for the right action of $U$.
Indeed, the entries
$
\{A_{sj}\mid 1\leq s<j\}
$
of a normal column $j$ of $[A]\in\V$ cannot be changed by right multiplication with any element of $U$. Consequently, the entries in all normal positions, and hence the entire normal kernel, are constant on the right orbit $\cO_A^r$.
To see this, observe that a root subgroup $X_{ab}$ acts nontrivially on column $j$ only if $a=j$. However, if row $j$ does not belong to $J$, then $X_{jb}\not\subseteq U_J$ by definition.
\hfill$\square$
\end{Remark}

\begin{Defn}\label{flip}
For $(i,j)\in J\subseteq\Phi^+$, the \textbf{hook} centered at $(i,j)$ is
\[
h_{ij}=h_{ij}^a\cup h_{ij}^l\cup\{(i,j)\},
\]
where
\[
h_{ij}^a=\{(i,k)\in J\mid i<k<j\}
\]
is the \textbf{hook arm},
\[
h_{ij}^l=\{(k,j)\in J\mid i<k<j\}
\]
is the \textbf{hook leg}, and $(i,j)$ is the \textbf{hook center}.

The \textbf{reduced hook} $\widehat h_{ij}$ is obtained from $h_{ij}$ by removing the hook center and all normal positions. Similarly, the \textbf{reduced hook arm} $\widehat h_{ij}^a$ is obtained from $h_{ij}^a$ by removing all normal positions.

There is an obvious involutory bijection
\[
\f=\f_{ij}:\widehat h_{ij}\longrightarrow\widehat h_{ij},
\]
which maps $(i,k)$ to $(k,j)$,
whenever $i<k<j$ and row $k$ is contained in $J$. Indeed, if row $k$ is not contained in $J$, then $(k,j)\notin J$, while $(i,k)$ is a normal position and hence does not belong to $\widehat h_{ij}^a$. Thus $\f^2=1$, and $\f$ induces a bijection between the hook leg $h_{ij}^l$ and the reduced hook arm $\widehat h_{ij}^a$. We call $\f$ the \textbf{flip map centered} at $(i,j)$.
\hfill$\square$
\end{Defn}

The following illustration shows the flip map. If row $k$ is contained in $J$,
then $(i,k)$ is mapped to $(k,j)$, and vice versa. If row $k$ is not contained
in $J$, then $(i,k)$ is a normal position and therefore does not belong to
$\widehat h_{ij}^a$:

\begin{equation}\label{picnormalbijection}
\begin{picture}(200,140)
\put(95,85){\line(0,-1){65}}
\put(95,85){\line(-1,0){65}}
\put(0,115){\line(1,-1){125}}

\put(95,85){\circle*{3}}
\put(30,85){\circle*{3}}
\put(95,20){\circle*{3}}
\put(95,40){\circle*{3}}
\put(75,85){\circle*{3}}
\put(75,40){\circle*{3}}
\put(97,44){\makebox{\small$(k,j)$}}
\put(90,90){\makebox{$(i,j)$}}
\put(52,74){\makebox{\small$(i,k)$}}
\put(22,80){\makebox{$i$}}
\put(86,15){\makebox{$j$}}
\put(66,35){\makebox{$k$}}

\put(75,40){\line(1,0){50}}

\put(75,40){\line(0,1){75}}

\put(125,115){\line(0,-1){125}}
\put(125,115){\line(-1,0){125}}

\put(150,115){\vector(-1,-1){52}}
\put(152.5,118){$h_{ij}^l$}

\put(86,133.5){\vector(-1,-1){47}}
\put(81,137){$h_{ij}^a$}

\end{picture}
\end{equation}
\smallskip

In [\cite{DG3}, 4.18] we classified the right orbits of column closed pattern subgroups of $U_n(q)$ by means of so-called templates, obtained from linear characters by removing as many nonvanishing entries as possible using restricted column operations. We now introduce the corresponding notion for the left action of row closed pattern subgroups, again called \textbf{templates}:

\begin{Defn}\label{template}
\begin{enumerate}
\item[i)]
A linear character $[A]\in\V$ is called a {\bf (left) template} if, for every nonzero row $\fr_i(A)$ of $A$, all entries
$A_{kj}$ with $i<k<j$ vanish, where $A_{ij}$ is the rightmost nonzero entry of row $i$. Equivalently, all entries on the hook leg $h_{ij}^l$ below the rightmost nonzero entry of each nonzero row vanish.

The position $(i,j)\in J$ is called the {\bf main condition} of row $i$. The set of all main conditions of the template $[A]$ is denoted by $\p[A]$. Since main conditions are rightmost nonzero entries of rows, they necessarily lie in distinct columns.

A subset $\p\subseteq J$ is called a {\bf set of main conditions} if $\p=\p[A]$ for some template $[A]\in\V$. The set
\[
\main[A]=\{(A_{ij},(i,j))\mid(i,j)\in\p[A]\}
\]
records the values of the main conditions together with their positions. The hooks centered at main conditions are called {\bf main hooks}, and intersections of main hooks are called {\bf main hook intersections}.

\item[ii)]
The positions of a template $[A]$ lying in the same row as a main condition, strictly to its left and not below another main condition, are called {\bf supplementary conditions}. Their set is denoted by $\suppl(\p)$, where $\p=\p[A]$. Equivalently, $\suppl(\p)$ consists precisely of the positions on main hook arms that are not main hook intersections. By abuse of terminology, the entries of $[A]$ at these positions are also called {\bf supplementary conditions}. Apart from the main entries, these are the only entries of $A$ that may be nonzero.

\item[iii)]
Let $[A]$ be a template and $\p=\p[A]$. The positions in $\suppl(\p)$ belonging to normal columns are called {\bf normal supplementary conditions}; their set is denoted by
$
\bfn=\bfn(\p).
$
The complementary set
$
\bfy=\bfy(\p):=\suppl(\p)\setminus\bfn(\p)
$
consists of the {\bf nonnormal supplementary conditions}, or  {\bf $y$-conditions} for short. Finally, $[A]$ is called a {\bf normal template} if all of its nonzero supplementary conditions are normal.
\hfill$\square$
\end{enumerate}
\end{Defn}

\begin{Example}\label{hookintersection}
Let  $J=\Phi^+\setminus \{\text{row $k$}\}$, let $[A]$ be a template in $\V$ with main condition set $\p=\{(i,l), (j,m)\}$ and let $A_{il} = z_1$ and $A_{jm}=z_2$.
\begin{center}
\begin{picture}(250,130)
\put(-63,80){$y$-conditions}
\put(58,140){\circle*{3}}
\put(58,140){\line(-1,-1){57}}
\put(69,110){\circle*{3}}
\put(69,110){\line(-5,-2){67}}

\put(0,160){\line(1,0){160}}
\put(0,160){\line(1,-1){160}}
\put(160,160){\line(0,-1){160}}

\put(20,140){\line(1,0){82}}
\put(102,139){$z_1$}
\put(105,138){\line(0,-1){83}}

\put(7,135){$i$}
\put(100,43){$l$}

\put(69,70){$k$}

\put(50,110){\line(1,0){82}}
\put(132,107){$z_2$}
\put(135,106){\line(0,-1){81}}

\put(32,107){$j$}
\put(126,16){$m$}
\put(105,110){\circle{4}}
\put(105,110){\line(3,-1){70}}
\put(178,85){main hook intersection}

\put(80,80){\line(1,0){80}}
\multiput(78,75)(9,0){9}{\line(1,1){10}}
\multiput(80,80)(0,6.25){13}{\line(0,1){5}}

\put(80,110){\circle*{3}}
\put(80,140){\circle*{3}}
\put(81,110){\line(5,1){83}}
\put(81,140){\line(6,-1){83}}
\put(168,125){normal supplementary conditions}


\end{picture}
\end{center}
Column $k$ is normal, and $(j,l)$ is the main hook intersection.
Furthermore, all positions in row $j$ strictly to the left of $z_2$, except
for $(j,l)$, together with all positions in row $i$ strictly to the left of
$z_1$, are the supplementary conditions. Among these, $(i,k)$ and $(j,k)$ are
normal supplementary conditions, whereas all remaining supplementary
conditions are $y$-conditions.\hfill$\square$
\end{Example}

\begin{Procedure}\label{maketemplate}
We now describe, with the aid of Illustration~\ref{pic4-3} below, how
restricted row operations can be used to eliminate entries from
$[A]\in\V$ and thereby produce a template in the left orbit
$\cO_A^l$.

\begin{center}
\begin{equation}\label{pic4-3}
\begin{picture}(220, 110)


\put(50,0){\line(0,1){130}}
\put(50,0){\line(-1,1){130}}
\put(50,130){\line(-1,0){130}}


\put(-56,84){$i$}
\put(-45,95){\circle*{4}}
\dottedline[\tiny x]{5}(-40,95)(-22,95)
\put(-21,93){$\scriptstyle A_{ik}$}
\dottedline[\tiny x]{5}(-5,95)(5,95)
\put(7,93){$\scriptstyle A_{ij}$}
\put(21,93){$\scriptstyle 0 \ldots\ldots  0$}


\put(-21,49){$k$}
\put(-10,60){\circle*{4}}
\dottedline[\tiny x]{5}(-5,60)(5,60)
\put(7,58){$\scriptstyle A_{kj}$}
\dottedline[\tiny x]{5}(25,60)(44,60)


\put(04,24){$j$}
\put(15,35){\circle*{4}}


\multiput(-10,65)(0,6){4}{\line(0,1){5}}
\multiput(15,65)(0,6){4}{\line(0,1){5}}
\multiput(15,38)(0,6){3}{\line(0,1){5}}


\put(50,96){\line(1,0){10}}
\put(59,96){\line(0,-1){26}}
\put(59,70){\vector(-1,-1){10}}
\put(64,80){$-A_{ij}^{-1}A_{kj}$}


\put(135,56){$ = $}
\put(-115,56){$x_{ik}(A_{ij}^{-1}A_{kj}) . $}



\put(270,0){\line(0,1){130}}
\put(270,0){\line(-1,1){130}}
\put(270,130){\line(-1,0){130}}


\put(164,84){$i$}
\put(175,95){\circle*{4}}
\dottedline[\tiny x]{5}(180,95)(198,95)
\put(199,93){$\scriptstyle A_{ik}$}
\dottedline[\tiny x]{5}(215,95)(225,95)
\put(227,93){$\scriptstyle A_{ij}$}
\put(241,93){$\scriptstyle 0 \ldots\ldots  0$}


\put(199,49){$k$}
\put(210,60){\circle*{4}}
\dottedline[\tiny x]{5}(215,60)(225,60)
\put(233,58){$\scriptstyle 0$}
\dottedline[\tiny x]{5}(245,60)(264,60)


\put(224,24){$j$}
\put(235,35){\circle*{4}}


\multiput(210,65)(0,6){4}{\line(0,1){5}}
\multiput(235,65)(0,6){4}{\line(0,1){5}}
\multiput(235,38)(0,6){3}{\line(0,1){5}}

\end{picture}
\end{equation}
\end{center}

 If $A$ is the zero matrix, then $[A]$ is the unique template in
$\cO_A^l=\{[A]\}$. Hence assume that $A\neq0$. Let $i$ be the index of a
nonzero row of $A$, and let $j$ be maximal with $i<j\le n$ and
$A_{ij}\neq0$, so that $A_{ij}$ is the rightmost nonzero entry of row
$i$. Let $i<k<j$ and define
\[
[B]=x_{ik}(A_{ij}^{-1}A_{kj}).[A]\in\cO_A^l.
\]
Then $B$ agrees with $A$ except possibly in row $k$, at positions on or
to the left of $(k,j)$. Moreover, $B_{kj}=0$, as can be seen from
Illustration~\ref{pic4-3}. Repeating this procedure for every entry
$A_{kj}$ in column $j$ with $i<k<j$, we obtain a linear character
$[C]\in\cO_A^l$ such that $C_{ij}=A_{ij}$, every entry of column $j$
below $(i,j)$ vanishes, and $C$ differs from $A$ only at positions
strictly southwest of $(i,j)$.

Proceeding in this way through all nonzero rows from top to bottom,
starting with the highest nonzero row, produces the desired template in
$\cO_A^l$ (cf.~[\cite{DG3}, 4.9]).
\hfill$\square$
 \end{Procedure}

In [\cite{DG3}, 4.13], for column closed pattern groups $U_J$, we defined the
\textbf{places} $\Pl(\p)=\Pl(\p,J)$ associated with a set of main conditions
$\p$ to be the positions on the main hook arms. The corresponding notion for
row closed pattern groups is obtained by replacing hook arms with hook legs.

\begin{Defn}\label{lplaces}
Let $[A]\in\V$ be a template, and let $\p=\p[A]$. The positions lying on the hook legs
$h_{ij}^l$, where $(i,j)\in\p$, are called
$\pmb{\ell}$-\textbf{places} of $\p$. Their set is denoted by
$
\lPl(\p) = \lPl(\p,J),
$
or simply by $\lPl$ when $\p$ is fixed.
\hfill$\square$
\end{Defn}

\begin{Remark}\label{mirror rl}
Recall that the mirror map \ref{mirror} reflects matrices across the antidiagonal, thereby interchanging rows and columns. It therefore induces natural bijections between the various objects associated with the row closed pattern subgroup $U_J$ of $U_n(q)$ and those associated with its mirror group $U_{\bar J}$.

In particular, it yields a bijection from $\V_J$ onto $\V_{\bar J}$, taking sets of main conditions $\p\subseteq J$ to the corresponding sets of main conditions $\overline\p\subseteq\bar J$ (see [\cite{DG3}, 4.8]) and main hooks for $\p$ to the corresponding main hooks for 
$\overline\p$, while interchanging hook legs and hook arms. Moreover, the mirror image of a template $[A]\in\V_J$ is precisely the template 
$[\bar A]\in\V_{\bar J}$ defined in [\cite{DG3}, 4.8]. Furthermore, the mirror map sends the left orbit
$\cOl_A\subseteq\V_J$ bijectively onto the right orbit
$\cOr_{\bar A}\subseteq\V_{\bar J}$, and maps the set of $\ell$-places
$\lPl(\p,J)$ bijectively onto $\Pl(\overline\p,\bar J)$.
\hfill$\square$
\end{Remark}

Theorem [\cite{DG3}, 4.18] asserts that every right orbit in $\V_{\bar J}$
contains a unique (right) template. Together with Remark~\ref{mirror rl},
this immediately yields:

\begin{Theorem}\label{unique one}
Each left $U_J$-orbit of $\V_J$ contains a unique template. Consequently,
the templates classify the left $U_J$-orbits on $\V_J$.
\hfill$\square$
\end{Theorem}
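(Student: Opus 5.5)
The argument is a transfer along the mirror map, combined with [\cite{DG3}, 4.18]. We split it into existence and uniqueness, but both halves come from the same dictionary.

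First we make the dictionary explicit. Since $J$ is row closed, $\bar J$ is column closed, so [\cite{DG3}, 4.18] applies to $U_{\bar J}$ acting on $\V_{\bar J}$ from the right. Composing the mirror map with $\theta$ gives a bijection $\V_J\to\V_{\bar J}$, $[A]\mapsto[\bar A]$. By Lemma~\ref{mirrorlemma}, taking $u_2=E$, we have $\overline{u[A]}=[\bar A]\bar u$ for all $u\in U_J$. Because $u\mapsto\bar u$ is a bijection $U_J\to U_{\bar J}$, this yields $\overline{\cOl_A}=\cOr_{\bar A}$ as sets. Here only the permutation action matters; the monomial scalars are irrelevant for orbits, as noted in Remark~\ref{PermAction}. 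Consequently, the left orbits of $U_J$ on $\V_J$ correspond bijectively to the right orbits of $U_{\bar J}$ on $\V_{\bar J}$.

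Second, we check that templates correspond. We must show that $[A]$ is a left template in the sense of Definition~\ref{template} if and only if $[\bar A]$ is a right template in the sense of [\cite{DG3}, 4.8]; this is the content of Remark~\ref{mirror rl}. To verify it directly, note that the mirror sends row $i$ to column $\bar i$, and the rightmost nonzero entry $(i,j)$ of row $i$ to the topmost nonzero entry $(\bar j,\bar i)$ of column $\bar i$. It also sends the hook leg $h^l_{ij}$ to the hook arm $h^a_{\bar j\bar i}$. Hence the condition ``the hook leg below each main condition vanishes'' becomes ``the hook arm to the left of each (column-)main condition vanishes'', which is the defining condition in [\cite{DG3}, 4.8]. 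This step is the main obstacle, and it is also the only place where care is needed: one must match precisely the conventions of [\cite{DG3}] (which entry is taken as main in a column, and which arm is cleared). To settle this I would first compare Procedure~\ref{maketemplate} with its mirror image, namely restricted column operations clearing entries to the left of the top entry of a column.

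Finally, we conclude. Given a left orbit $\cO=\cOl_A$, its image $\cOr_{\bar A}$ contains a unique right template $[T']$ by [\cite{DG3}, 4.18]. Then $[T]=\overline{[T']}$ lies in $\cO$ and is a left template. If $[S]\in\cO$ is another left template, then $\overline{[S]}$ is a right template in $\cOr_{\bar A}$, so $\overline{[S]}=[T']$ and hence $[S]=[T]$, because the mirror map is injective. Existence can alternatively be seen directly from Procedure~\ref{maketemplate}. Thus the map sending each left orbit to its template is a bijection between left orbits and templates, which is the classification claimed.
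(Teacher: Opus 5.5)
Your proposal is correct and follows the paper's route. The paper deduces the theorem directly from [\cite{DG3}, 4.18] applied to the column closed group $U_{\bar J}$, together with Remark~\ref{mirror rl}, which asserts exactly the two correspondences you verify explicitly: $\overline{\cOl_A}=\cOr_{\bar A}$ via Lemma~\ref{mirrorlemma}, and left templates corresponding to right templates via the hook leg/hook arm dictionary.
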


We again suppress the index $J$ until further notice.

Given a template $[A]\in\V$, Remark~\ref{mirror rl} and [\cite{DG3}, 4.19] immediately yield the following description of the elements in its left orbit:

\begin{Prop}\label{placebij}
Let $[A]\in\V$ be a template with $\p[A]=\p$. Then, for every choice of
scalars $\beta_{ij}\in\F_q$, $(i,j)\in\lPl(\p)$, there exists a unique
$[B]\in\cOl_A$ satisfying
\[
B_{ij}=\beta_{ij}\qquad\text{for all }(i,j)\in\lPl(\p).
\]
Consequently,
$
|\cOl_A|=q^{|\lPl(\p)|},
$
and, for every $[B]\in\cOl_A$, the entries $B_{kl}$ with
$(k,l)\in J\setminus\lPl(\p)$ are linear functions of the entries
$B_{ij}$, $(i,j)\in\lPl(\p)$.

If, in addition, $[A]$ is a normal template, then every nonzero nonnormal
entry of $[B]$ lies on a main hook leg. In particular, all entries of
$[B]$ lying above a nonnormal main condition in its column vanish.
\hfill$\square$
\end{Prop}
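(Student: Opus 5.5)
The plan is to transport the statement through the mirror map rather than argue directly with restricted row operations. By Remark~\ref{mirror rl}, $[A]\mapsto[\bar A]$ is a bijection $\V_J\to\V_{\bar J}$. It sends the template $[A]$ to the right template $[\bar A]$ of the column closed group $U_{\bar J}$, sends $\p$ to $\overline\p$, sends $\cOl_A$ bijectively onto $\cOr_{\bar A}$, and sends $\lPl(\p,J)$ bijectively onto $\Pl(\overline\p,\bar J)$. Since the mirror map only permutes coordinates, $\bar B_{\bar j\bar i}=B_{ij}$.

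First, I would invoke [\cite{DG3}, 4.19] for $\cOr_{\bar A}$. This says that for any prescribed values on the places $\Pl(\overline\p,\bar J)$ there is a unique element of $\cOr_{\bar A}$ taking those values, and that the remaining coordinates are linear functions of these. Pulling back along the mirror bijection gives existence and uniqueness of $[B]\in\cOl_A$ with $B_{ij}=\beta_{ij}$ on $\lPl(\p)$. It also gives the linear dependence of the entries $B_{kl}$, $(k,l)\in J\setminus\lPl(\p)$, on these values, because the mirror map is an $\F_q$-linear coordinate permutation. The cardinality $|\cOl_A|=q^{|\lPl(\p)|}$ then follows at once, since the map $\cOl_A\to\F_q^{\lPl(\p)}$, $[B]\mapsto(B_{ij})$, is a bijection.

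For the normal-template assertion, I would check directly that the mirror image of a normal template is a normal template in the sense of [\cite{DG3}]. Normal columns $j$ for row closed $J$ (row $j$ missing) correspond to normal rows $\bar j$ for column closed $\bar J$ (column $\bar j$ missing), and supplementary conditions on hook arms correspond to those on hook legs. The corresponding part of [\cite{DG3}, 4.19] says that every nonzero nonnormal entry of an element of the right orbit lies on a main hook arm; mirroring turns arms into legs, which gives the claim here. The final sentence follows from this. Take a position $(s,j)$ with $s<i$ above a nonnormal main condition $(i,j)$. It is not on the leg $h_{ij}^l$, which lies below $(i,j)$. It is also not on any other main hook leg $h_{i'j'}^l$, because main conditions lie in distinct columns and so $j'\neq j$. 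Since it is nonnormal, the entry vanishes.

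The main obstacle is bookkeeping rather than mathematics. I must make sure the translation dictionary is exact: templates, normality, and arms versus legs must match under the mirror map. If [\cite{DG3}, 4.19] is not phrased with the normal-template refinement, I would instead prove that part directly. The idea would be to note that from a normal template the left action only adds multiples of rows $i$ with $(i,j)\in\p$ to lower rows. By Remark~\ref{normalfixedfromright}, normal positions just produce scalars, so nonnormal nonzero entries can only be created in the main columns below main conditions, that is, on main hook legs.
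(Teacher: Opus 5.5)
Your proposal is correct and follows the paper's approach: the paper gives no separate proof and derives the proposition directly from Remark~\ref{mirror rl} together with [\cite{DG3}, 4.19] by transporting through the mirror map, exactly as you do. Your direct check of the final sentence is a correct supplement, and so is your fallback argument that rows of $B=\pi_J(u^{-t}A)$ are combinations of main-condition rows of $A$. The only caveat is that a nonnormal main condition itself is nonzero but not on any hook leg, so the statement must be read as counting main conditions as part of their hooks.
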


\section{Special Right Orbits}

In the previous section, we described the left $U_J$-orbits on $\V_J$
(and, equivalently, the right $U_{\bar J}$-orbits on $\V_{\bar J}$) in
terms of templates. There does not appear to be a corresponding notion of
right templates that parametrizes the right orbits. Nevertheless, as we
shall see, the classification of the left orbits, together with a counting
argument, provides a route to the classification of the right orbits.

Corollary~\ref{iso orbits} reduces the classification of the one-sided
orbits to two problems. First, the decomposition of $\V_J$ into biorbits
provides, by choosing one linear character $[A]\in\cObi$ from each
biorbit, a complete set of representatives of the pairwise nonisomorphic
orbit modules $\C\cOl_A$ (equivalently $\C\cOr_A$). It therefore remains
to decompose each biorbit into its one-sided orbits.

We begin by studying the biorbits of $U_J$ acting on $\V_J$.
We again suppress the index $J$ until further notice.

Let $[A]\in\V$. By Theorem~\ref{unique one}, the distinct left orbits
contained in the biorbit $\cObi_A$ are in one-to-one correspondence with
the templates contained in $\cObi_A$. Thus, to determine all left orbits
contained in $\cObi_A$, it suffices to determine these templates.
Clearly,
\begin{equation}\label{biorbitdec}
\cObi_A = U.[A].U =
\begin{cases}
\cOl_A.U = \displaystyle\bigcup_{[B]\in\cOl_A}\cOr_B,\\[2mm]
U.\cOr_A = \displaystyle\bigcup_{[C]\in\cOr_A}\cOl_C.
\end{cases}
\end{equation}

Hence, to classify the left orbits contained in $\cObi_A$, it suffices to
determine, for each $[B]\in\cOr_A$, the unique template contained in the
left orbit $\cOl_B$, and then identify which of these templates coincide.
Similarly, to classify the right orbits contained in $\cObi_A$, it suffices
to investigate the right orbits $\cOr_B$ with $[B]\in\cOl_A$.

Observe that, when studying $\cObi_A$, we may always assume that $[A]$
itself is a template, since every biorbit contains a left orbit and,
by Theorem~\ref{unique one}, each left orbit contains a unique template.
In Proposition~\ref{normaltempl} below, we shall show that we may, in fact,
assume that $[A]$ is a normal template.

\begin{Defn}\label{quasitemplate}
A linear character $[B]\in\V$ is called {\bf special} if the rightmost
nonzero entries of the nonzero rows of $B$ lie in pairwise distinct
columns. Orbits consisting of special linear characters are called
{\bf special} as well.

If $[B]\in\V$ is special, then the positions of the rightmost nonzero
entries of the nonzero rows of $B$ form the set
$\p[B]=\p$ of {\bf main conditions}, thereby extending the definition of
main conditions for templates given in Definition~\ref{template}(i). We
set
\[
\main[B]=\{(B_{ij},(i,j))\mid(i,j)\in\p[B]\}.
\]

Thus a special linear character differs from a template only in that
entries at main hook intersections are allowed to be nonzero.

Finally, if $[B]\in\V$ is special and all entries at the
$y$-conditions of $\p[B]$ vanish, then $[B]$ is called
{\bf extraspecial}.
\hfill$\square$
\end{Defn}

Proposition~\ref{placebij} provides a precise description of the left
orbit determined by a template. As noted at the beginning of this
section, there does not appear to be a corresponding notion of templates
for right orbits. Nevertheless, the right orbits generated by templates,
or more generally by special linear characters, will play a crucial role
in the classification of the right orbit $\C U_J$-modules. We now
describe the elements of these right orbits:

\begin{Defn}\label{rplaces}
Let $[A]\in\V$ be special, and let $\p=\p[A]$.
The positions $(r,s)\in\widehat h_{ij}^a$, where $(i,j)\in\p$, are called
\textbf{$r$-places} of $\p$. Their set is denoted by $\rPl(\p)$.
Equivalently, the $r$-places are precisely the nonnormal positions on
nonzero rows lying strictly to the left of the main conditions.
For $(i,j)\in\p$, the set of $r$-places contained in the reduced hook arm
$\widehat h_{ij}^a$ is denoted by $\rPl(i,j)$.
\hfill$\square$
\end{Defn}

\begin{Prop}\label{rplacebij}
Let $[A]\in\V$ be special with $\p[A]=\p$. Then, for any choice of
elements $\beta_{rs}\in\F_q$, $(r,s)\in\rPl(\p)$, there exists a unique
$[C]\in\cOr_A$ such that
\begin{enumerate}
\item[(i)] $C_{rs}=\beta_{rs}$ for all $(r,s)\in\rPl(\p)$;
\item[(ii)] $C_{ij}=A_{ij}$ for all main conditions $(i,j)\in\p$ and all
normal positions $(i,j)\in J$;
\item[(iii)] $C_{rs}=0$ for all remaining positions $(r,s)\in J$.
\end{enumerate}
Conversely, every element $[C]\in\cOr_A$ arises in this way.
Moreover, every element of $\cOr_A$ is special and satisfies
$\main[C]=\main[A]$.
In particular, the right orbit $\cOr_A$ is special, and the main
conditions together with their entries are constant on $\cOr_A$.
\end{Prop}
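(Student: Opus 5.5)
The plan is a counting argument. Let $S$ be the set of all $[C]\in\V$ described by (i)--(iii), where the $\beta_{rs}$, $(r,s)\in\rPl(\p)$, range freely over $\F_q$. I will show that $[A]\in S$, that $S$ is stable under the right action of $U$, and that $|S|=|\cOr_A|$. Together these give $\cOr_A=S$. The uniqueness of $[C]$ for a given choice of the $\beta_{rs}$ is built into the definition of $S$, since (i)--(iii) fix every entry. The final statements then follow, because every element of $S$ has the same main conditions and main entries as $A$.

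First, $[A]\in S$ (with $\beta_{rs}=A_{rs}$). Since $A$ is special, every entry of $A$ in a zero row, or strictly to the right of the main condition of its row, vanishes. The remaining positions of $J$ are main conditions, normal positions, and nonnormal positions strictly left of a main condition in a nonzero row, which are exactly the $r$-places.

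Second, stability. It suffices to check this for root elements $x_{ab}(\alpha)$ with $(a,b)\in J$. By Corollary~\ref{restricted action}, $[C].x_{ab}(\alpha)$ adds $-\alpha$ times column $b$ to column $a$ and then truncates to $J$. Because $J$ is row closed, only the rows $r<a$ are affected, and only at positions $(r,a)$.

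Since $(a,b)\in J$, row $a$ lies in $J$, so column $a$ is nonnormal. Hence normal positions never change. A zero row $r$ stays zero, because $C_{rb}=0$ there.

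Now let $r$ be a nonzero row with main condition $(r,j_r)$. If $b>j_r$, then $C_{rb}=0$ and nothing changes; this covers the case $a=j_r$, so main entries are preserved. If $b\le j_r$, then $a<j_r$, so $(r,a)$ is a nonnormal position strictly left of the main condition, i.e.\ an $r$-place. Changing its value keeps us in $S$. Thus $S$ is right $U$-stable and $\cOr_A\subseteq S$.

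Third, the count, which I expect to be the main obstacle. Clearly $|S|=q^{|\rPl(\p)|}$. By Facts~\ref{gen orbit facts}(2), $|\cOr_A|=|\cOl_A|$, and by Theorem~\ref{unique one} and Proposition~\ref{placebij}, $|\cOl_A|=q^{|\lPl(\p')|}$, where $\p'$ is the main condition set of the template in $\cOl_A$.

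I will show $\p'=\p$ by running Procedure~\ref{maketemplate} on the special character $[A]$. When clearing the entries $A_{kj}$ below a main condition $(i,j)$, only row $k$ is modified, and only at positions on or left of $(k,j)$. If the main column $j_k$ of row $k$ satisfies $j_k<j$, then $A_{kj}=0$ already and no operation is needed. If $j_k>j$, the modifications lie strictly left of $(k,j_k)$. Either way the rightmost nonzero entries of all rows, and their values, are untouched, so the resulting template has $\p'=\p$.

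Finally, $|\lPl(\p)|=|\rPl(\p)|$ follows from the flip map $\f_{ij}$ of Definition~\ref{flip}. For each $(i,j)\in\p$, $\f_{ij}$ gives a bijection between the hook leg $h^l_{ij}$ and the reduced hook arm $\widehat h^a_{ij}$. Summing over the main conditions, which lie in distinct rows and distinct columns so the legs and the arms are pairwise disjoint, gives $|S|=|\cOr_A|$ and hence $\cOr_A=S$.
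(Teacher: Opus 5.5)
Your proof is correct, but it proves existence differently from the paper.

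Your first two steps, $[A]\in S$ and right $U$-stability of $S$ checked on root elements, match the paper's closing observation. Restricted column operations preserve the rightmost nonzero entry of every row and only alter nonnormal positions strictly to the left of main conditions. This gives $\cOr_A\subseteq S$, the converse, and the statements about $\main[\,\,]$.

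For existence, the paper does not count. It orders the main conditions by column, $t_1<\cdots<t_k$, and prescribes the entries on each reduced hook arm $\widehat h^a_{r_\mu t_\mu}$ directly via moves $[A].x_{st}(\gamma)$ with $\gamma=\alpha^{-1}(\eta-\beta)$. It then checks two things. First, such a move only disturbs rows whose main condition lies further to the right, which are processed later. Second, an arm already processed is never touched again, since the entry at $(r_\nu,t_\mu)$ vanishes for $\nu<\mu$.

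You replace this bookkeeping with a count built from four inputs:
\begin{itemize}
\item the equality $|\cOr_A|=|\cOl_A|$ from Facts~\ref{gen orbit facts}(2);
\item the template description of left orbits, Theorem~\ref{unique one} and Proposition~\ref{placebij};
\item invariance of $\p$ under Procedure~\ref{maketemplate};
\item the flip bijection $h^l_{ij}\leftrightarrow\widehat h^a_{ij}$.
\end{itemize}
The invariance of $\p$ is the paper's Lemma~\ref{mainpreserved 1}, which you prove directly, so there is no circularity.

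The trade-off is as follows. Your argument is shorter and needs only a one-step stability check. However, it is non-constructive, and it relies on the left-orbit classification imported via the mirror map and on the general equal-size fact. The paper's proof is self-contained for the right action and exhibits an explicit element of $U$ realizing each prescribed $[C]$. It then obtains $|\cOr_A|=|\cOl_A|=q^{|\rPl(\p)|}$ (Remark~\ref{OrOl bij}) as a consequence rather than using it as input.
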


\begin{proof} 
The basic step consists of using the restricted column operations from
Corollary~\ref{restricted action} to prescribe arbitrary entries
$\beta\in\F_q$ at nonnormal positions $(r,s)\in J$ lying in a row
containing a main condition $(r,t)\in\p$, where $r<s<t$.
Observe that restricted column operations preserve the zero rows and the
rightmost nonzero entry of every nonzero row.

Set $0\neq A_{rt}=\alpha$ and suppose that $A_{rs}=\eta\in\F_q$.
Since $(r,s)$ is nonnormal, row $s$ is contained in $J$, and therefore
$X_{st}\le U$. Acting on $[A]$ from the right by
$x_{st}(\gamma)$, where
$\gamma=\alpha^{-1}(\eta-\beta)\in\F_q$, yields
$
[C]=[A].x_{st}(\gamma),
$
and hence $C_{rs}=\beta$, as illustrated below:
\bigskip

\begin{equation}\label{ycondpic}
\begin{picture}(150,110)
\put(130,100){\line(0,-1){130}}
\put(130,100){\line(-1,0){130}}
\put(0,100){\line(1,-1){130}}


\put(90,10){\circle*{3}} 
\put(80,5){\makebox{$c$}}

\put(10,90){\circle*{3}} 
\put(2,86){\makebox{$r$}}
\put(10,90){\line(1,0){27}}
\put(42,90){\line(1,0){27}}
\put(38,88){\makebox{$\eta$}}
\put(67,88){\makebox{$\alpha$}}
\put(70,30){\circle*{3}} 
\put(58,26){\makebox{$t$}}
\put(70,30){\line(0,1){40}}
\put(69,78){\line(0,1){10}}

\put(40,60){\circle*{3}} 
\put(31,55){\makebox{$s$}}
\put(40,60){\line(1,0){91}}
\put(40,60){\line(0,1){27}}


\put(25,75){\circle*{3}} 
\put(17,71){\makebox{$b$}}
\put(25,75){\line(1,0){42}}
\put(72,75){\line(1,0){15}}
\put(67,72){\makebox{$\zeta$}}
\put(87,72){\makebox{$\rho$}}
\put(90,10){\line(0,1){60}}

\put(148,88){\makebox{main conditions}}
\put(145,90){\vector(-4,-1){45}}
\put(145,90){\vector(-4,0){63}}

\put(145,73){\vector(-1,-1){13}}
\put(148,70){\makebox{row $s\subseteq J$}}


\put(69,103){\line(0,1){10}}
\put(70,113){\line(-1,0){19}}
\put(51,113){\vector(-1,-1){10}}
\put(50,116){\makebox{$-\gamma$}}

\put(-75,45){\makebox{$[A].x_{st}(\gamma):$}}

\put(148,30){\makebox{$(r,s)=\f(s,t)$ wrt hook $h_{rt}$}}

\end{picture}
\end{equation}
\bigskip
\bigskip

 We remark that this operation changes the entry $A_{bs}$ to
$
C_{bs}=A_{bs}-\gamma\zeta.
$

Observe that, when prescribing the entries on a fixed reduced hook arm
$\widehat h_{rt}^a$, the order in which the restricted column operations
are performed is irrelevant, since they amount to right multiplication by
an element of the abelian subgroup
$
\prod_{r<s<t}X_{st}.
$

Furthermore, apart from the prescribed entry at $(r,s)$, the linear
characters $[A]$ and $[A].x_{st}(\gamma)$ can differ only at positions
$(b,s)$ with $r<b<s$ for which the entry
$\zeta=A_{bt}$ to the right of $(b,s)$ is nonzero. Such a position
$(b,t)$ lies on the reduced hook arm $h_{bc}^a$ of the main condition
$(b,c)\in\p$ for some $r< b<t$.

Order the main conditions as
\begin{equation}\label{p order 1}
\p=\{(r_1,t_1),\ldots,(r_k,t_k)\},
\qquad
t_1<t_2<\cdots<t_k.
\end{equation}
We process the corresponding reduced hook arms in this order going from left to right. First we
prescribe the entries at the nonnormal positions in row $r_1$ to the left
of the main condition $(r_1,t_1)$. This may change entries at nonnormal
positions $(b,c)\in J$ with $b<t_1$. However, all such positions lie in
rows $b\in\{r_2,r_3,\ldots,r_k\}$, which have not yet been processed and
will be treated by later applications of the move illustrated in
\ref{ycondpic}.

Now assume inductively that the prescribed entries have already been
assigned on the reduced hook arms
$\widehat h_{r_\nu t_\nu}^a$ for
$1\le\nu<\mu\le k$.
Since $t_\nu<t_\mu$, the entry in position
$(r_\nu,t_\mu)$ vanishes, because $(r_\nu,t_\nu)$ is the rightmost
nonzero entry of row $r_\nu$. Consequently, the restricted column
operations used to prescribe the entries on
$\widehat h_{r_\mu t_\mu}^a$ do not affect row $r_\nu$.
Thus, once a reduced hook arm has been processed, it remains unchanged.

By induction, every entry outside the main conditions and the normal
supplementary conditions can therefore be prescribed arbitrarily by right
multiplication with a suitable element of $U$.

Finally, restricted column operations preserve the rightmost nonzero entry
of every nonzero row and do not alter entries outside the reduced hook
arms. Hence every element of $\cOr_A$ arises in the manner described
above, and the remaining assertions follow immediately.
\end{proof}

\begin{Remark}\label{OrOl bij}
Let again $[A]\in\V$ be special. In the setting of
Proposition~\ref{rplacebij}, the flip maps
$\f_{ij}:h_{ij}^l\rightarrow\widehat h_{ij}^a$, where $(i,j)$ runs through
$\p=\p[A]$, combine to a bijection
$
\f:\lPl(\p)\longrightarrow\rPl(\p).
$
Hence
$
|\cOl_A|
=
q^{|\lPl(\p)|}
=
q^{|\rPl(\p)|}
=
|\cOr_A|.
$
Furthermore, choosing $\beta_{rs}=0$ for all
$(r,s)\in\rPl(\p)$ yields an extraspecial linear character
$[E^A]\in\cOr_A$, which in addition vanishes at all nonnormal main hook
intersections. Consequently, $[E^A]$ is a normal template if and only if
$A_{rs}=0$ for every normal main hook intersection $(r,s)\in J$.
\hfill$\square$
\end{Remark}

Taking Remark~\ref{normalfixedfromright} into account, we obtain:

\begin{Cor}\label{extraspecial exists}
Every special right orbit $\cO\subseteq\V$ contains a unique
extraspecial linear character $[E]=[E^A]$ that vanishes at all nonnormal
main hook intersections. In particular,
$
\cO=\cOr_E.
$
Moreover, $[E]$ is a normal template if and only if every linear character
in $\cO$ vanishes at the normal main hook intersections.
\hfill$\square$
\end{Cor}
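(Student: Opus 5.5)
The plan is to derive the corollary directly from Proposition~\ref{rplacebij} and Remark~\ref{OrOl bij}, with Remark~\ref{normalfixedfromright} supplying the constancy of normal entries on right orbits.

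\emph{Existence.} Let $\cO$ be a special right orbit and pick any $[A]\in\cO$. Since $[A]$ is special, Remark~\ref{OrOl bij} provides $[E^A]\in\cOr_A=\cO$, obtained in Proposition~\ref{rplacebij} by taking $\beta_{rs}=0$ for all $(r,s)\in\rPl(\p)$. I check the required properties from the description (i)--(iii) there. By Proposition~\ref{rplacebij}, $\main[E^A]=\main[A]$, so $[E^A]$ is special with the same main conditions. Every $y$-condition is a nonnormal position on a main hook arm strictly left of a main condition. Such a position is therefore an $r$-place and has entry $0$, so $[E^A]$ is extraspecial. Likewise, every nonnormal main hook intersection $(r,s)$ lies on the arm of the main condition of row $r$ and is nonnormal, hence is also an $r$-place with entry $0$. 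Since $[E^A]\in\cO$ and $\cO$ is a right orbit, $\cO=\cOr_{E^A}$.

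\emph{Uniqueness.} Let $[E],[E']\in\cO$ both be extraspecial and vanish at all nonnormal main hook intersections. By Proposition~\ref{rplacebij}, every $[C]\in\cO$ is determined by three pieces of data:
\begin{itemize}
\item[(a)] its values on $\rPl(\p)$;
\item[(b)] the main conditions and their values, which are constant on $\cO$;
\item[(c)] the normal entries, which are constant on $\cO$ by Remark~\ref{normalfixedfromright}.
\end{itemize}
All remaining entries vanish. The set $\rPl(\p)$ consists exactly of the nonnormal positions on main hook arms, so it is the disjoint union of the $y$-conditions and the nonnormal main hook intersections. Both $[E]$ and $[E']$ vanish on this set, so they agree on $\rPl(\p)$. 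By the uniqueness clause of Proposition~\ref{rplacebij}, applied to any base point of $\cO$, it follows that $[E]=[E']$.

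\emph{The normal template criterion.} By Definition~\ref{template}, a normal template is a template (so all main hook intersections vanish) all of whose nonzero supplementary conditions are normal. $[E]$ already vanishes on the $y$-conditions and on the nonnormal main hook intersections. Hence $[E]$ is a normal template if and only if it also vanishes at the normal main hook intersections. Normal entries are constant on $\cO$ by Remark~\ref{normalfixedfromright}, so this happens exactly when every element of $\cO$ vanishes there.

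The main obstacle is the identification of $\rPl(\p)$ with the union of the $y$-conditions and the nonnormal main hook intersections. It also requires checking that nothing else becomes nonzero: positions outside the main rows, and positions right of main conditions, are zero by (iii) of Proposition~\ref{rplacebij}. This needs careful bookkeeping with Definitions~\ref{template} and~\ref{rplaces}, but I expect it to be routine.
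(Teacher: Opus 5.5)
Your proposal is correct and is essentially the paper's own argument. The corollary is read off from Proposition~\ref{rplacebij} with all $r$-place values set to $0$ (Remark~\ref{OrOl bij}), using that $\rPl(\p)$ is the disjoint union of $\bfy(\p)$ and the nonnormal main hook intersections, together with Remark~\ref{normalfixedfromright} for the constancy of the normal entries on $\cO$. You leave one step implicit: that a special linear character vanishing at all main hook intersections is a template. This is exactly the observation recorded right after Definition~\ref{quasitemplate}, so you can cite it there.
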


Proposition~\ref{rplacebij} shows that the main conditions together with their values are constant on special right orbits. In fact, a much stronger statement holds: they are constant on the union of all special right orbits contained in a given biorbit of~$\V$. The key observation is the following immediate consequence of Procedure~\ref{maketemplate}. 

\begin{Lemma}\label{mainpreserved 1} Let $[B]\in\V$ be special, and let $[T]$ be the unique template in $\cOl_B$. Then $[T]$ can be obtained from $[B]$ by a sequence of restricted row operations such that every intermediate linear character $[S]$ is special and satisfies 
$\main[B]=\main[S]$. Consequently $\main[\,\,\,]$ is constant on the special linear characters contained in the left orbit $\cOl_B = \cOl_T$.
\hfill$\square$ 
\end{Lemma}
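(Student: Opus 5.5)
We run Procedure~\ref{maketemplate} starting from $[B]$ itself and check that each individual elementary step preserves specialness and the set $\main[\,\,]$. Write $\p[B]=\{(r_1,t_1),\ldots,(r_k,t_k)\}$ with $r_1<r_2<\cdots<r_k$, and process the nonzero rows from top to bottom. Suppose row $i=r_\nu$ is being processed, with rightmost nonzero entry at $(i,j)=(r_\nu,t_\nu)$. The elementary move clears an entry at $(k,j)$ with $i<k<j$ by applying $x_{ik}(\gamma)$ with $\gamma=S_{ij}^{-1}S_{kj}$. Here $x_{ik}$ lies in $U$: since $S_{kj}\neq 0$, row $k$ meets $J$, and $(i,j)\in J$ with $J$ row closed gives $(i,k)\in J$. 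By Corollary~\ref{restricted action}, this move adds $-\gamma$ times row $i$ to row $k$ and truncates to $J$. Because row $i$ vanishes to the right of column $j$, only entries of row $k$ in columns $\le j$ change.

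The key step is that row $k$ keeps its rightmost nonzero entry. If row $k$ were zero to the right of column $j$, then $S_{kj}\neq 0$ would be the rightmost nonzero entry of row $k$. Since $S$ is special, column $j$ already contains the main condition $(i,j)$, so it cannot contain a second one; hence this case is impossible. Therefore row $k$ is nonzero and its main condition $(k,t)$ satisfies $t>j$. The operation leaves all entries right of column $j$ unchanged, so $(k,t)$ and $S_{kt}$ are unchanged. All other rows are untouched, and in particular row $i$ is unchanged. So the set of positions of rightmost nonzero entries and their values stay the same. In particular these positions still lie in distinct columns, so the new character is special with the same $\main$.

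Induction on the number of moves then shows that every intermediate $[S]$ is special with $\main[S]=\main[B]$. Since restricted row operations are the left action of elements of $U$, the final character lies in $\cOl_B$, and it is a template by Procedure~\ref{maketemplate}. By Theorem~\ref{unique one} it equals $[T]$. This gives $\main[B]=\main[T]$ for every special $[B]$, and the final assertion follows: any two special members of $\cOl_T$ have the same template and hence the same $\main$.

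The main obstacle is the specialness argument: a cleared entry $(k,j)$ must never be the rightmost nonzero entry of its row. I would check this carefully against the ordering used in the procedure, in particular that processing higher rows first does not break it. It does not, because the argument only uses specialness of the current character, and that is maintained inductively.
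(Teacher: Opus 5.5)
Your proof is correct and follows the paper's approach: the paper states the lemma as an immediate consequence of Procedure~\ref{maketemplate}, and you supply the verification it omits. That verification is that a clearing move $x_{ik}(\gamma)$ changes only row $k$, in columns $\le j$, while specialness rules out $(k,j)$ being the rightmost nonzero entry of row $k$, so the main condition of row $k$ lies strictly to the right of column $j$ and is unaffected; with Theorem~\ref{unique one} this yields $\main[B]=\main[T]$ and hence the constancy of $\main[\,\,]$ on the special characters in $\cOl_T$.
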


\begin{Example}\label{make template} We illustrate this with an example:
\quad
\begin{center}
\begin{picture}(350,210)

\put(0,100){$[B]=$}
\put(0,200){\line(1,0){180}}
\put(0,200){\line(1,-1){180}}
\put(180,200){\line(0,-1){180}}
\put(5,180){1}
\put(20,165){2}
\put(35,150){3}
\put(50,135){4}
\put(65,120){5}
\put(80,105){6}
\put(95,90){7}
\put(110,75){8}
\put(125,60){9}
\put(137,45){10}
\put(152,30){11}
\put(135,65){\line(1,0){46}}
\multiput(135,60)(8,0){6}{\line(1,1){10}}
\put(151,50){\line(1,0){30}}
\multiput(151,45)(8,0){4}{\line(1,1){10}}

\put(60,140){\line(1,0){105}}
\put(165,140){\line(0,-1){105}}
\put(165,140){\circle*{2}}
\put(167,139){$z_1$}
\put(152,141){\small $\gamma$}
\put(150,140){\circle{4}}
\put(137,141){\small $\delta$}
\put(135,140){\circle{4}}
\put(122,141){\small $\zeta$}
\put(120,140){\circle{4}}
\put(45,155){\line(1,0){105}}
\put(150,155){\line(0,-1){105}}
\put(150,155){\circle*{2}}
\put(152,154){$z_2$}
\put(135,155){\circle{4}}
\put(137,156){\small $\alpha$}
\put(120,155){\circle{4}}
\put(122,156){\small $\beta$}

\put(30,170){\line(1,0){90}}
\put(120,170){\line(0,-1){90}}
\put(120,170){\circle*{2}}
\put(122,171){$z_3$}

\put(15,185){\line(1,0){120}}
\put(135,185){\line(0,-1){120}}
\put(135,185){\circle*{2}}
\put(137,185){$z_4$}

\put(230,150){$\p=\{(4,11), (3,10), (2,8), (1,9)\}$}

\put(230,130){columns 9 and 10 are normal}

\put(230,100){main hook intersections:}

\put(230,80){$\{(3,9),(3,8),(4,10),(4,9),(4,8) \}$}
\end{picture}
\end{center}
Since $[B]$ is assumed to be special the only nonzero entries of $B$ are in rows $1,2,3$ and $4$ to the left and on the main conditions $z_1,z_2,z_3$,
 and $z_4$ respectively. There are nonzero entries $\alpha,\gamma,\delta$  at normal and $\beta,\zeta$ at nonnormal main hook intersections.
 All further nonzero values of $B$  are supplementary conditions not indicated in the illustration. To obtain zero values at the main hook intersections we act by suitable elements of the  root subgroups $X_{1,3},X_{2,3},X_{3,4},X_{1,4},X_{2,4}$ in that or any order compatible with the order of the hook arms top down.
 \hfill$\square$
\end{Example}

\begin{Prop}\label{mainpreserved} Let $\cO\subseteq\V$ be a biorbit, and let $[S],[T]\in\cO$ be special. Then $\main[S]=\main[T]$, and hence $\main[\,\,\,]$ is constant on the set of special linear characters contained in $\cO$. Moreover, if $[T]\in\cO$ is a template, then every special linear character in the left orbit $\cOl_T\subseteq\cO$ can be obtained from $[T]$ by a sequence of restricted row operations such that every intermediate linear character is again special. \end{Prop}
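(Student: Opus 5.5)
Given special $[S],[T]$ in the same biorbit $\cO$, I will connect them through a chain of one-sided orbits, each step of which preserves $\main[\,\,]$; the two one-sided tools are Lemma~\ref{mainpreserved 1} for left orbits and Proposition~\ref{rplacebij} for right orbits. By (\ref{biorbitdec}) we have $\cO=U.\cOr_S$, so $[T]\in\cOl_C$ for some $[C]\in\cOr_S$. By Proposition~\ref{rplacebij}, every element of the special right orbit $\cOr_S$ is special with $\main[C]=\main[S]$. Thus $[C]$ and $[T]$ are two special linear characters in the same left orbit $\cOl_C=\cOl_T$, and Lemma~\ref{mainpreserved 1} gives $\main[C]=\main[T]$. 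Combining the two equalities yields $\main[S]=\main[T]$. The same argument shows that $\main[\,\,]$ is constant on the set of special linear characters in $\cO$.

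For the second assertion, let $[T]\in\cO$ be a template and let $[B]\in\cOl_T$ be special. Since $\cOl_B=\cOl_T$ and $[T]$ is the unique template in that orbit by Theorem~\ref{unique one}, Lemma~\ref{mainpreserved 1} supplies a sequence of restricted row operations, that is, left actions by root elements $x_{ik}(\alpha)$, carrying $[B]$ to $[T]$ through special intermediates. I then reverse this sequence. The inverse of $x_{ik}(\alpha)$ is $x_{ik}(-\alpha)$, again a restricted row operation, and the permutation action is a group action. So applying the inverse operations in reverse order carries $[T]$ back to $[B]$, and the intermediate characters are exactly the same special ones as before. This gives the required sequence from $[T]$ to $[B]$.

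I expect no serious obstacle here. The only point needing care is that Lemma~\ref{mainpreserved 1} is formulated for a special $[B]$ together with \emph{the} template in its left orbit. We must therefore make sure that $[C]$ is special before invoking the lemma, which Proposition~\ref{rplacebij} guarantees. We also need the template of $\cOl_C$ to coincide with the template of $\cOl_T$; this holds because both are the unique template of the same left orbit. Formally, we apply the lemma to both $[C]$ and $[T]$: the lemma gives that each has the same $\main$ as the unique template of that orbit, and hence $\main[C]=\main[T]$.
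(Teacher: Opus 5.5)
Your proposal is correct and follows essentially the paper's own proof. The paper first reduces to $[T]$ being a template and passes through $[S].y^{-1}=x.[T]\in\cOr_S\cap\cOl_T$, which is exactly your $[C]$; it uses Proposition~\ref{rplacebij} and Lemma~\ref{mainpreserved 1} just as you do, and obtains the second assertion by the same reversal of the restricted row operations.
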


\begin{proof}
In view of Lemma~\ref{mainpreserved 1}, we may assume that $[T]$ is a
template. Since $[S]\in\cObi_T=\cO$, we find $x,y\in U$ such that
$[S]=x.[T].y$, and hence
\[
[\tilde S]=[S].y^{-1}=x.[T]\in\cOl_T
\]
is special with $\main[\tilde S]=\main[S]$ by
Proposition~\ref{rplacebij}. Moreover,
$\main[T]=\main[\tilde S]$ by Lemma~\ref{mainpreserved 1}, and therefore
$\main[S]=\main[T]$, as desired. Finally, reversing the sequence of
restricted row operations in Lemma~\ref{mainpreserved 1} proves the last
assertion.
\end{proof}

As an immediate consequence we obtain result [\cite{DG3}, 5.1]. Since every
biorbit $\cObi\subseteq\V$ contains a left orbit, it contains a template
$[A]$, and hence $\cObi=\cObi_A$. By Proposition~\ref{rplacebij}, the
right orbit $\cOr_A$ consists of special linear characters $[C]$
satisfying $C_{rs}=0$ for all normal main hook intersections. Moreover,
 $\cOr_A$ contains a unique
extraspecial linear character
$[E]\in\cOr_A\subseteq\cObi_A$ by Corollary~\ref{extraspecial exists}, which is a normal template. Thus we have
shown:

\begin{Cor}\label{normaltempl}
Every biorbit $\cObi$ in $\V$ contains normal templates.
\hfill$\square$
\end{Cor}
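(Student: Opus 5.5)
The plan is to start from an arbitrary biorbit $\cObi$ and pass, step by step, to a normal template inside it. First, $\cObi$ contains some left orbit $\cOl_B$ with $[B]\in\cObi$. By Theorem~\ref{unique one}, $\cOl_B$ contains a unique template $[A]$. Hence $\cObi=\cObi_A$, and we may assume from now on that $[A]$ is a template. A template is special, because its main conditions lie in distinct columns. So Proposition~\ref{rplacebij} and Corollary~\ref{extraspecial exists} apply to the right orbit $\cOr_A\subseteq\cObi_A$.

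Next, Corollary~\ref{extraspecial exists} gives a unique extraspecial $[E]=[E^A]\in\cOr_A$ vanishing at all nonnormal main hook intersections. This $[E]$ is obtained from Proposition~\ref{rplacebij} by choosing $\beta_{rs}=0$ for all $(r,s)\in\rPl(\p)$. By the same corollary, $[E]$ is a normal template exactly when every element of $\cOr_A$ vanishes at the normal main hook intersections. Since normal entries are constant on right orbits (Remark~\ref{normalfixedfromright}), this reduces to checking that $A$ itself vanishes there.

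That check is immediate because $[A]$ is a template. By Definition~\ref{template}, the only possibly nonzero entries of a template are main conditions and supplementary conditions, and supplementary conditions exclude main hook intersections. So $A_{rs}=0$ at every main hook intersection, normal or not. Then Proposition~\ref{rplacebij}(ii) gives $C_{rs}=A_{rs}=0$ at normal main hook intersections for all $[C]\in\cOr_A$. Hence $[E]$ is a normal template.

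Finally, $[E]\in\cOr_A\subseteq\cObi_A=\cObi$, which proves the statement. The step needing the most care is confirming that $[E]$ really is a template, that is, that its leg entries below each main condition vanish. The nonzero entries of $[E]$ are only main conditions and normal positions carrying $A$'s values. The normal entries in a main column $j$ below the main condition $(i,j)$ lie on the hook leg $h^l_{ij}$, and they vanish because $A$ is a template. The nonnormal supplementary entries of $[E]$ are zero by construction, so $[E]$ is indeed normal.
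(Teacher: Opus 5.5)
Your proposal is correct and follows essentially the same route as the paper. You take the template $[A]$ in a left orbit of the biorbit and use Proposition~\ref{rplacebij} and Remark~\ref{normalfixedfromright} to see that every element of $\cOr_A$ vanishes at the normal main hook intersections because $[A]$ does; Corollary~\ref{extraspecial exists} then shows that the extraspecial $[E^A]\in\cOr_A$ is a normal template, and your extra direct check of the hook-leg condition for $[E^A]$ is a harmless confirmation of what that corollary already gives.
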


\begin{Notation} \label{Q sets} From now on we use the following notations:  
\begin{enumerate}
\item[(1)] $[A]\in\V$ is a normal template in $\V$.
\item[(2)] For $W\subseteq\V$  we denote the set of templates contained in $W$ by 
$\fT(W)$, of normal templates by $\fTn(W)$,
of special linear characters by $\fS(W)$ and of extraspecial linear characters by $\fE(W)$.  
For $[B]\in\V$ we replace the argument $\cObi_B$ by the index $B$ and write $\fT_B = \fT(\cObi_B)$
and similarly $\fTn_B,\fS_B$ and $\fE_B$. 
\item[(3)] For $[B]\in\V$ we abbreviate
\[
\cOrl_B=\cOr_B\cap\cOl_B
\]
and recall from Remark~\ref{HomSpace} that
$\cOrl_B$ determines a basis of
$\End_{\C U}(\C\cOr_B)$.
\item[(4)] For $[C]\in\fS_B$ let $[C^\fe]\in\fE_B$ denote the unique extraspecial linear character satisfying 
\[
C^\fe_{ij}=
\begin{cases}
C_{ij}, & \text{if }(i,j)\in\p[C]\cup \nker[C],\\
0, & \text{otherwise} 
\end{cases}
\]
If in addition $C_{rs} = 0$ for all normal main hook intersections, then $[C^\fe]$ is a normal template in $\fTn_B$.  
\item[(5)] $[T_B]\in\cOl_B$ denotes the unique template contained in the left orbit $\cOl_B$. 
\hfill$\square$
\end{enumerate}
\end{Notation}

By Theorem~\ref{unique one}, it suffices to determine all templates in
$\cObi_A$ in order to classify the left orbits contained therein. We now
show that it is in fact enough to construct the normal templates. Indeed,
all templates in $\cObi_A$ are then obtained simply by assigning arbitrary
values from $\F_q$ to the $y$-conditions of the normal templates.

\begin{Prop}\label{normalenough}
Let $[T]\in\cObi_A$ be a normal template. Then, for any choice of
$\beta_{ij}\in\F_q$ for $(i,j)\in\bfy=\bfy(\p)$, the linear character
$[C]\in\V$ defined by $C_{ij}=\beta_{ij}$ for all $(i,j)\in\bfy$ and
$C_{ij}=T_{ij}$ otherwise, is a template in $\fT(\cOr_T)$. As the
$\beta_{ij}$ vary over $\F_q$, one obtains all templates in
$\fT(\cOr_T)$. Thus, in particular,
$|\fT(\cOr_T)|=q^\upsilon$, where $\upsilon=|\bfy|$.
Moreover, the union
\begin{equation}\label{templates fT}
\bigcup_{[T]\in\fTn_A}\fT(\cOr_T)
\end{equation}
is disjoint and equals the set $\fT_A$ of all templates in $\cObi_A$.
\end{Prop}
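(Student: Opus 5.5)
The plan is to read everything off from Proposition~\ref{rplacebij}, applied to the normal template $[T]$. Set $\p=\p[T]$. Since $[T]$ is special, that proposition describes $\cOr_T$ exactly: it consists of the special $[C]$ with
\begin{itemize}
\item[(a)] $\main[C]=\main[T]$;
\item[(b)] $C$ agreeing with $T$ on all normal positions;
\item[(c)] arbitrary entries on $\rPl(\p)$;
\item[(d)] zeros everywhere else.
\end{itemize}
Next I would split the $r$-places. By Definition~\ref{rplaces}, $\rPl(\p)$ is the set of nonnormal positions on nonzero rows strictly left of the main conditions. By Definition~\ref{template}(ii),(iii), such a position is either a $y$-condition in $\bfy(\p)$ or a nonnormal main hook intersection. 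So $\rPl(\p)$ is the disjoint union of $\bfy$ and the set of nonnormal main hook intersections.

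The key step is to decide which $[C]\in\cOr_T$ are templates. For $[C]$ special with main conditions $\p$, being a template means that the entries on the hook legs $h_{ij}^l$ below each main condition $(i,j)\in\p$ vanish. I would check that every nonzero candidate position $(k,j)$ on such a leg is a main hook intersection. Row $k$ must be nonzero, and $(k,j)$ must lie strictly left of the main condition of row $k$, since main conditions lie in distinct columns and entries to the right of a main condition vanish. Hence $(k,j)$ lies on the arm of that main hook as well as on the leg of $h_{ij}$, so it is a main hook intersection. Conversely, every main hook intersection is such a leg position. So $[C]$ is a template if and only if $C$ vanishes at all main hook intersections.

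At the normal ones this holds automatically. $[T]$ is a normal template, so it vanishes there, and by Remark~\ref{normalfixedfromright} normal entries are constant on $\cOr_T$. At the nonnormal ones it is exactly the choice $\beta=0$ in Proposition~\ref{rplacebij}. The free parameters that remain are the entries on $\bfy$. Moreover, $[T]$ itself is the member of $\cOr_T$ with the given normal supplementary values, so the $[C]$ described in the statement are precisely the templates in $\cOr_T$. This gives $|\fT(\cOr_T)|=q^{\upsilon}$.

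For the union, let $[S]\in\fT_A$ be any template in $\cObi_A$. Templates are special, so by Proposition~\ref{rplacebij} the orbit $\cOr_S$ is special, and by Corollary~\ref{extraspecial exists} it contains a unique extraspecial $[E]$ vanishing at the nonnormal main hook intersections. Since $[S]$ is a template it vanishes at the normal main hook intersections. These entries are constant on $\cOr_S$, so the same corollary shows that $[E]$ is a normal template. Thus $[E]\in\fTn_A$ and $[S]\in\cOr_E$.

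For disjointness, suppose $[S]\in\fT(\cOr_T)\cap\fT(\cOr_{T'})$ with $[T],[T']\in\fTn_A$. Then $\cOr_T=\cOr_{T'}$. A normal template has zero $y$-conditions, so it is extraspecial, and it vanishes at all main hook intersections. Therefore both $[T]$ and $[T']$ are the unique element singled out in Corollary~\ref{extraspecial exists}, and $[T]=[T']$.

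I expect the main obstacle to be the identification of the nonzero hook-leg positions with main hook intersections and of $\rPl(\p)$ with $\bfy$ together with the nonnormal main hook intersections. This needs careful use of the distinct-column property of main conditions and of row closedness, which makes the normal columns exactly those whose rows are missing from $J$. Everything else is bookkeeping from Proposition~\ref{rplacebij} and Corollary~\ref{extraspecial exists}.
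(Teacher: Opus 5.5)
Your proposal is correct and follows essentially the paper's route: you describe $\cOr_T$ via Proposition~\ref{rplacebij}, identify the templates in it as the elements vanishing at the nonnormal main hook intersections (with the $y$-conditions free), and then use Corollary~\ref{extraspecial exists} to show that every template in $\cObi_A$ lies in some $\fT(\cOr_T)$ with $[T]$ a normal template. The only difference is cosmetic. You obtain disjointness from the uniqueness of the distinguished extraspecial character in Corollary~\ref{extraspecial exists}, whereas the paper observes that the sets $\fT(\cOr_T)$ have pairwise distinct normal kernels.
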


\begin{proof} By construction, $[C]$ coincides with $[T]$ at the main conditions and on the normal columns, and has nonzero entries elsewhere only at positions on the main hook arms of $\p[T]=\p[A]$. Moreover, it vanishes at all main hook intersections. Therefore $[C]$ is a template. Now \ref{rplacebij} yields that 
$[C]\in\cOr_T$ and hence $[C]\in\fT(\cOr_T)$. 

Conversely, let $[D]\in\cOr_T$ be a template. Then
$\nker[D]=\nker[T]$, and hence $[D^\fe]=[T]$. In particular,
$[D]$ and $[T]$ coincide except possibly at positions on reduced main
hook arms that are not main hook intersections. Thus they differ only at
positions $(i,j)\in\bfy$, and hence $[D]$ is one of the linear
characters $[C]$ constructed above.

If $[C]\in\cObi_A$ is a template, then the right orbit
$\cOr_C$ contains a normal template
$[T]\in\fTn_A$ by Corollary~\ref{extraspecial exists}. Hence
$[C]\in\fT(\cOr_T)$, and therefore $[C]$ belongs to the union
\ref{templates fT}. Obviously, the subsets
$\fT(\cOr_T)$  have pairwise distinct normal
kernels. Hence the union is disjoint. 
\end{proof}

Recall that $[A]\in\V$ is a normal template by the general assumption i) in Notation \ref{Q sets}.

\begin{Lemma}\label{normtempl yconds}
Linear characters $[B]$ in $\cOl_A$ vanish at the $y$-conditions
$(i,j)\in\bfy(\p)$. Consequently, all special linear characters in
$\cOl_A$ are extraspecial.
\end{Lemma}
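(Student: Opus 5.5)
The plan is to use the explicit description of $\cOl_A$ in Proposition~\ref{placebij}, applied to the normal template $[A]$, and to check directly that every $y$-condition position carries a zero entry. Let $(i,s)\in\bfy(\p)$. By definition $(i,s)$ is a supplementary condition in the row of a main condition $(i,j)\in\p$, with $i<s<j$. It lies in a nonnormal column $s$, so row $s$ is contained in $J$. Moreover, $(i,s)$ is not a main hook intersection, so it does not lie on the hook leg of any main condition. The final assertion of Proposition~\ref{placebij} says that, since $[A]$ is a normal template, every nonzero nonnormal entry of any $[B]\in\cOl_A$ lies on a main hook leg. As $(i,s)$ is nonnormal and not on any main hook leg, we get $B_{is}=0$.

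The main step is therefore to confirm that a $y$-condition is never on a main hook leg $h^l_{kl}$ with $(k,l)\in\p$. Being on such a leg means $s=l$ and $k<i<l$, i.e.\ $(i,s)$ lies below the main condition $(k,s)$. This is exactly the definition of a main hook intersection, which Definition~\ref{template}(ii) excludes from $\suppl(\p)$. The condition ``not below another main condition'' in that definition gives precisely this. This bookkeeping is the only point that needs care. If one distrusts the final clause of Proposition~\ref{placebij}, I would instead reprove it by induction along the restricted row operations $x_{ab}(\alpha)$ generating $\cOl_A$. A row operation adds a multiple of row $a$ to row $b$ and then truncates to $J$. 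One checks that the invariant ``nonzero nonnormal entries lie only on main hook legs or at main conditions'' is preserved, using that the main conditions are rightmost nonzero entries in distinct columns. This inductive check is where I expect the real difficulty to lie.

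For the consequence, let $[B]\in\cOl_A$ be special. By Lemma~\ref{mainpreserved 1}, $\main[B]=\main[A]$, so $\p[B]=\p$ and the $y$-conditions of $\p[B]$ are exactly $\bfy(\p)$. By the first part, $B$ vanishes on $\bfy(\p)$, so $[B]$ is extraspecial by Definition~\ref{quasitemplate}.
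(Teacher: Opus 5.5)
Your argument is correct, but it runs through the final clause of Proposition~\ref{placebij} instead of arguing directly, as the paper does.

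The paper's proof notes that, since $[A]$ is a normal template, the only possible nonzero entry of the nonnormal column $j$ of $A$ is a main condition. Since $(i,j)$ is not a main hook intersection, there is no such main condition at or above row $i$, hence $A_{aj}=0$ for all $a\le i$. Restricted row operations only add multiples of higher rows to lower rows; equivalently, $u.[A]=[\pi_J(u^{-t}A)]$ with $u^{-t}$ lower unitriangular. So the vanishing of the top segment of column $j$ persists on all of $\cOl_A$, and $B_{ij}=0$.

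Your route is shorter on the page, but it leans on a clause that is stated here without proof, imported via the mirror map. Read literally, that clause also needs a caveat: nonnormal main conditions are themselves nonzero nonnormal entries lying on no main hook leg. You handle this correctly, since a $y$-condition is never a main condition. Your check is also right that, for a position on a main hook arm, ``not a main hook intersection'' means exactly ``not on any main hook leg''.

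Your fallback induction is far easier than you anticipate. The left action treats each column separately and triangularly, and unpacking this is precisely the paper's one-line argument.

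Finally, your use of Lemma~\ref{mainpreserved 1} to get $\p[B]=\p$, so that the $y$-conditions of $[B]$ are those of $\p$, supplies a step the paper leaves implicit in ``The lemma follows.''
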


\begin{proof}
Let $(i,j)\in\bfy(\p)$. Since $(i,j)$ is a $y$-condition, it is
nonnormal by definition, and there is a main condition in row $i$ to the
right of $(i,j)$. Moreover, $(i,j)$ is not a main hook intersection.
Hence column $j$ is not normal and contains no main condition
$(a,j)$ with $a<i$. Since $[A]$ is a normal template, this implies
$A_{aj}=0$ for all $1\le a\le i$, and hence
$B_{aj}=0=B_{ij}$. The lemma follows.
\end{proof}

\begin{Theorem}\label{nker suff}
Let $[C]\in\cOl_A$ be special and suppose $[C]$ vanishes at normal main hook intersections. Then $[C]$ is extraspecial, and there exists 
$y\in U$ such that $[T]=[C].y\in\cObi_A$ is a normal template. Moreover, the correspondence $\cT_A:[C]\mapsto[T]$ defines a surjective map from the set of extraspecial  linear characters in $\cOl_A$ that vanish at all normal main hook intersections onto the set of normal templates in the biorbit $\cObi_A$.  
\end{Theorem}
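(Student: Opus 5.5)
First, extraspeciality. Since $[C]\in\cOl_A$ and $[A]$ is a normal template (Notation~\ref{Q sets}(1)), Lemma~\ref{normtempl yconds} shows that $[C]$ vanishes at all $y$-conditions of $\p[A]$. By Proposition~\ref{mainpreserved}, $\main[C]=\main[A]$, so $\p[C]=\p$ and the $y$-conditions of $[C]$ are those of $[A]$. Hence $[C]$ is extraspecial.

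Next I construct $y$. The plan is to take the element $[C^\fe]$ of Notation~\ref{Q sets}(4), which is the $\beta=0$ choice in Proposition~\ref{rplacebij} applied to the special character $[C]$. By Proposition~\ref{rplacebij} there is a unique $[E]\in\cOr_C$ that keeps the main conditions and all normal positions of $[C]$ and is zero at every $r$-place and at every other position. It remains to check that $[E]$ is a normal template. Its nonzero entries are the main conditions and entries in normal columns. Every normal entry lies in a row containing a main condition, strictly to its left: this holds for $[C]$ because $[C]$ is special with $\p[C]=\p$, and normal columns are fixed by the right action by Remark~\ref{normalfixedfromright}.

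The normal main hook intersections of $[E]$ vanish because they vanish for $[C]$ by hypothesis. Nonnormal main hook intersections lie among the $r$-places, so they are set to $0$. Hence every entry of $[E]$ on a main hook leg below a main condition is zero, and $[E]$ is a template. Its only nonzero supplementary conditions are normal, so it is a normal template, as in Corollary~\ref{extraspecial exists}. Choosing $y\in U$ with $[E]=[C].y$ and setting $[T]=[E]$ gives $[T]\in\cObi_C=\cObi_A$. The assignment $\cT_A$ is well defined by the uniqueness in Corollary~\ref{extraspecial exists}.

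For surjectivity, let $[T]\in\fTn_A$ and write $[T]=x.[A].z$. Put $[C]=x.[A]=[T].z^{-1}\in\cOl_A\cap\cOr_T$. Since $[T]$ is special, Proposition~\ref{rplacebij} shows that $[C]$ is special. It has the same main conditions and the same normal columns as $[T]$, so its normal main hook intersections agree with those of $[T]$, which are zero. By uniqueness in Corollary~\ref{extraspecial exists}, the normal template $[T]$ is the distinguished extraspecial element of $\cOr_C$, that is, $\cT_A([C])=[T]$.

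I expect the main obstacle to be the verification that $[E]$ is a genuine template. This requires that every nonzero normal entry lies strictly left of a main condition in its own row, and that nothing nonzero survives on hook legs in normal columns below main conditions. Both depend on correctly using that normal positions are right-invariant and that $[C]$ is special with $\p[C]=\p$.
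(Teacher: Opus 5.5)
Your proposal is correct and follows essentially the same route as the paper. Extraspeciality comes from Lemma~\ref{normtempl yconds}, the normal template in $\cOr_C$ comes from Proposition~\ref{rplacebij} (equivalently Corollary~\ref{extraspecial exists}), and surjectivity comes from writing $[T]=x.[A].z$ and taking $[C]=x.[A]\in\cOl_A\cap\cOr_T$. The only difference is that you check directly that the $\beta=0$ element of $\cOr_C$ is a normal template, and you make explicit via Proposition~\ref{mainpreserved} that $\p[C]=\p$; the paper leaves both points implicit.
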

\begin{proof}
There exists $z\in U$ such that $[C]=z.[A]$. Since $[C]$ is special
by assumption, it is extraspecial by Lemma~\ref{normtempl yconds}.
Hence, by Corollary~\ref{extraspecial exists}, there exists $y\in U$
such that
\[
[T]=[C].y=z.[A].y\in\cObi_A
\]
is the unique normal template in $\cOr_C$. Therefore, the
correspondence $\cT_A$ is a well-defined map.

Now let $[T]\in\cObi_A$ be a normal template. Then $[T]$ is
extraspecial and, as a template, vanishes at all main hook
intersections. Furthermore,
$
[T]=z.[A].y^{-1}
$
for some $z,y\in U$. Hence
\[
[C]=[T].y=z.[A]\in\cOr_T\cap\cOl_A.
\]
Now $[C]=[T].y$ is special by Proposition~\ref{rplacebij}, and
$\nker[C]=\nker[T]$ by Remark~\ref{normalfixedfromright}. Moreover,
$[C]=z.[A]$ is extraspecial by Lemma~\ref{normtempl yconds}. Since
$[T]$ vanishes at all normal main hook intersections and
$\nker[C]=\nker[T]$, the same is true for $[C]$. Thus $[C]\in\cOl_A$
is an extraspecial linear character vanishing at all normal main hook
intersections. Since $[T]$ is the unique normal template in $\cOr_C$,
we have $\cT_A([C])=[T]$. Hence $\cT_A$ is surjective.
\end{proof}

Theorem~\ref{nker suff} suggests that, in order to classify the left
orbits in $\cObi_A$, we first determine the special linear characters
in $\cOl_A$ that vanish at all normal main hook intersections. This
will be carried out in the next section. In a second step, in
Section~7, we establish a criterion for when two such special linear
characters are mapped under $\cT_A$ to the same normal template. We
then determine representatives of the fibres of $\cT_A$, which in
turn yield representatives of the left orbits in $\cObi_A$.

As we shall see later, there is a similar simplification for the
construction of the right orbits in $\cObi_A$. In this case, however,
normal templates are no longer sufficient. Instead, we shall need all
extraspecial linear characters in $\cOl_A$, including those that do not
vanish at the normal main hook intersections. Thus, in either case,
constructing the extraspecial linear characters in $\cOl_A$ is the key to
decomposing $\cObi_A$ into one-sided orbits.

\section{Construction of the Extraspecial Linear Characters in $\cOl_A$}

In this section we construct the extraspecial linear characters in the left orbit $\cOl_A$ of the normal template $[A]$. We start with some notation:

\begin{Defn}\label{column group} For $1\le s<n$, we define the \textbf{column group} $X_{\col(s)}$ to be the subgroup of $U=U_J$ generated by the root subgroups $X_{js}$ with $1\le j<s$ and $(j,s)\in J$. This is an abelian pattern subgroup of $U$. Hence 
\[ 
X_{\col(s)} = (X_{1,s}\times X_{2,s}\times\cdots\times X_{s-1,s})\cap U_J 
\] 
is the direct product of the root subgroups $X_{js}$ with $(j,s)\in J$. We write 
\[ X_\cS=\prod_{j\in\cS}X_{js}\le X_{\col(s)}
 \] 
for subsets $\cS\subseteq\{1,\ldots,s-1\}$ satisfying $\{(j,s)\mid j\in\cS\}\subseteq J$
 \hfill$\square$ \end{Defn}

 Let $1\le a\le n$. The only way to change the row $\fr_a(B)$ of
$[B]\in\V$ is by applying restricted row operations from the left,
that is, by acting on $[B]$ with a sequence of root group elements
$x_{ia}(\alpha)$, where $1\le i<a$, $(i,a)\in J$, and
$\alpha\in\F_q$. Equivalently, we act by elements of the column group
$X_{\col(a)}$ (see Illustration~\ref{nmupic} below). Since
$X_{\col(a)}$ is abelian, the order of the root group elements in such a
sequence is irrelevant.  
   
\begin{Setup}\label{Gammadef}
Observe that every nonnormal column of $[A]$ contains at most one nonzero entry, which is then necessarily a main condition, since $[A]$ is a normal template by assumption. Fix $(a,b)\in\p=\p[A]$. Until stated otherwise, we shall work with this fixed main condition and therefore omit from the notation introduced below indices indicating dependence on $(a,b)$. Later, when we combine the constructions associated with different main conditions, we shall index the objects as needed.

We define the following subsets of $\{1,\ldots,n\}$: 
\begin{alignat*}{3}
&\cR\, &\, = \, &\{e\,|\,\text{there exists } (e,f)\in\p \text{ with }1\leq e<a<f<b\}\\
&\hC&\,= \,  &\{s\,|\,a<s<b, \text{ and there exists } e\in\cR: A_{es}=\alpha\neq 0\}\\
&\cC&\,  = \,& \{s \,|\,s\in\hC,\, \fc_s(A) \text{ is normal}\}\\
&\cC^\n&\, = \, &\{s\,|\,s\in\cC \text{ there exists }1\leq t<a: (t,s)\in\p\}
\end{alignat*}

We define 
\[ 
\hG=A_{\cR\times\hC} \qquad\text{and}\qquad \G=\hG_{\cR\times\cC}=A_{\cR\times\cC}. 
\] 
Thus $\hG$ is obtained from $[A]$ by deleting all rows at or below row $a$ and all columns not strictly between columns $a$ and $b$. From the remaining matrix we further delete all zero rows and zero columns, as well as all rows containing a main condition strictly to the right of column $b$. The matrix $\G$ is obtained from $\hG$ by deleting all nonnormal columns. We illustrate these sets below:

\begin{equation}\label{nmupic}
\begin{picture}(230,235)

\put(10,220){\line(1,0){220}}
\put(10,220){\line(1,-1){220}}
\put(230,220){\line(0,-1){220}}

\put(100,130){\line(1,0){90}}
\put(190,130){\line(0,-1){90}}
\put(90,126.5){\makebox{$a$}}
\put(186.5,29.5){\makebox{$b$}}
\put(193,128){\makebox{$z_1$}}

\multiput(190,132)(0,5.9){15}{\line(0,1){3}}
\multiput(187,132)(-6.1,0){15}{\line(1,0){3}}
\multiput(102,132)(0,5.9){15}{\line(0,1){3}}
\multiput(187,218)(-6,0){15}{\line(1,0){3}}
\dottedline[$\line(0,1){1}$]{4}(100,133)(100,219)

\put(80,150){\line(1,0){130}}
\put(210,150){\line(0,-1){130}}
\put(48,146.5){\makebox{$\cR\not\owns c$}}
\put(206.5,8.5){\makebox{$d$}}
\put(213,148){\makebox{$z_2$}}

\put(40,190){\line(1,0){75}}
\put(124,190){\line(1,0){14}}
\put(145,190){\line(1,0){26}}
\put(170,190){\line(0,-1){130}}
\put(32.5,186){\makebox{$e$}}
\put(165,49){\makebox{$f$}}
\put(173,188){\makebox{$z_3$}}

\put(60,170){\line(1,0){90}}
\put(150,170){\line(0,-1){90}}
\put(52,166){\makebox{$g$}}
\put(146.5,71){\makebox{$r$}}
\put(152,168){\makebox{$z_4$}}

\put(120,110){\line(1,0){110}}
\multiput(126,105)(8.2,0){13}{\line(1,2){5}}
\put(140,90){\line(1,0){90}}
\multiput(146,85)(7.8,0){11}{\line(1,2){5}}
\put(110,106.5){\makebox{$s$}}
\put(132,86){\makebox{$t$}}

\multiput(120,132)(0,5.7){10}{\line(0,1){3}}
\multiput(120,195)(0,5.1){5}{\line(0,1){3}}

\multiput(140,132)(0,5.7){10}{\line(0,1){3}}
\multiput(140,195)(0,5.1){5}{\line(0,1){3}}
\put(116,188){\makebox{$\alpha$}}
\put(138,188){\makebox{$\beta$}}




\put(-80,92){\makebox{$A_{ab}=z_1, z_2,z_3,z_4$ main conditions}}
\put(-80,72){\makebox{$0\neq \alpha,\beta\in \F_q$}}
\put(-80,52){\makebox{row $c$ is not a row of $\hat \Gamma$ and $\Gamma$}}

\put(186,230){\vector(-2,-3){22}}

\put(185,228){\makebox{ $\hat\Gamma$}}

\put(150,230){\vector(-1,-1){29}}
\put(150,230){\vector(-1,-3){9}}

\put(150,228){\makebox{ $\Gamma$}}

\end{picture}
\end{equation}

Observe that $e,g\in\cR$, and parts of rows $e$ and $g$ are rows of $\hG$, 
since the main conditions $z_3,z_4$ lie to the north of row $a$ and strictly 
between columns $a$ and $b$. Columns $r$ and $f$ belong in part to $\G$, and 
$r,f\in\hC$, since $e,g\in\cR$ and the main conditions $A_{ef}=z_3$ and 
$A_{gr}=z_4$ are nonzero entries in these rows.

Rows $s$ and $t$ are not contained in $J$, and hence columns $s$ and $t$ are 
normal. They contain the normal supplementary conditions $\alpha$ and $\beta$, 
respectively, in row $e$, and hence $s,t\in\cC$. Neither column contains a main 
condition to the north of row $a$, and hence $s,t\notin\cC^\n$.

Also $c\notin\cR$, since row $c$ contains the main condition $z_2$ to the right 
of column $b$. If $0\neq\gamma\in\F_q$, then
\[
[B]=x_{ca}(\gamma).[A]
\]
satisfies
\[
B_{ad}=-\gamma z_2\neq0,
\]
and hence $[B]$ is no longer special. However, by 
Proposition~\ref{mainpreserved}, every special $[C]\in\cOl_A$ can be obtained 
from $[A]$ by a sequence of restricted row operations such that every 
intermediate linear character is special. Therefore, row $c$ is not used for 
restricted row operations on row $a$, and accordingly $c\notin\cR$.
\hfill$\square$
\end{Setup}

\begin{Remark}\label{nullmatrix hG}Keep the notation introduced in \ref{Gammadef} above. Then: 
\begin{enumerate} \item[(1)] The rows of $\hG$ are linearly independent, since each row contains a main condition and these main conditions lie in pairwise different columns. Thus $\rank(\hG)=|\cR|$. 
\item[(2)] The matrix $\G$ does not necessarily have full rank $|\cR|$. However, the columns 
$
\{\fc_s(A)\mid s\in\cC^\n\}
$ 
are linearly independent, since each of them contains a main condition and these main conditions lie in pairwise different rows and columns. 
\item[(3)] Columns $s,t$ of $\G$ in Illustration~\ref{nmupic} above intersect the hook arm $h_{ab}^a$ of $[A]$ at $(a,s)$ and $(a,t)$, respectively. Similarly, each main condition $(g,r)\in\p$ with $1\leq g<a<r<b$ gives rise to a main hook intersection 
$ 
(a,r)=h_{ab}^a\cap h_{gr}^l 
$ 
on row $a$ between columns $a$ and $b$. 
\item[(4)] It may happen that $\hG$, and hence $\G$ as well, are empty matrices, for instance when $(a,b)\in\p$ is the highest main condition. In this case, the entries on the hook arm $h_{ab}^a$ cannot be manipulated and therefore remain unchanged throughout the constructions that follow.
\hfill$\square$\end{enumerate}\end{Remark}

Let $i\in\cR$ and $\alpha\in\F_q$. From
Illustrations~\ref{permaction} and~\ref{nmupic}, together with
part~(4) of \ref{colvects}, it follows immediately that
$[B]=x_{ia}(\alpha).[A]$ is obtained by adding $-\alpha$ times the
extension by zeros of the row $\fr_i(\hG)$ of $\hG$ to row $a$ of
$[A]$. More generally, elementary linear algebra yields:

\begin{Lemma}\label{key lin alg} Let $\alpha_i\in\F_q$ for $i\in\cR$ and $x=\prod_{i\in\cR}x_{ia}(-\alpha_i)\in X_\cR\leq X_{\col(a)}$. Then $[B]=x.[A]$ coincides with $[A]$ except in row $a$, which is given by $\fr_a(B)=\fr_a(A)+\sum_{i\in\cR}\alpha_i\fr_i(\hG)$. 
\hfill$\square$ \end{Lemma}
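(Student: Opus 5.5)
The plan is to reduce to a single root subgroup element and then iterate, using the commutativity of the column group $X_{\col(a)}$.

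First, note that for $i\in\cR$ we have $i<a$ and $(i,a)\in J$. Indeed, row $i$ contains a main condition, and hence lies in $J$; since $J$ is row closed, every position $(i,l)$ with $l>i$ belongs to $J$. Thus $x_{ia}(-\alpha_i)\in X_{\col(a)}$. Because $X_{\col(a)}$ is abelian (Definition~\ref{column group}), the order of the factors in $x$ is irrelevant. It therefore suffices to show the following claim for a single factor: if $[B']$ is any linear character whose rows $i\in\cR$ agree with those of $A$, then
\[
[B'']=x_{ia}(-\alpha_i).[B']
\]
differs from $[B']$ only in row $a$, and
\[
\fr_a(B'')=\fr_a(B')+\alpha_i\fr_i(\hG).
\]
The lemma then follows by induction on the number of factors. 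Each step changes only row $a$, and $a\notin\cR$ because every element of $\cR$ is $<a$. Hence the rows $i\in\cR$ used in later steps are still those of $A$, and the contributions simply add up.

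For the single step I would invoke Corollary~\ref{restricted action}(2) with $\alpha=-\alpha_i$. The element $x_{ia}(-\alpha_i)$ adds $\alpha_i$ times row $i$ to row $a$ and then resets to zero all entries outside $J$. Clearly only row $a$ changes. It remains to identify the part of row $i$ that survives the truncation; I would do this column by column.

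\emph{Columns $l\le a$.} The positions $(a,l)$ are not in $\Phi^+$, so these contributions are discarded.

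\emph{Columns $l>a$.} Since $(a,b)\in\p\subseteq J$, row $a$ lies in $J$, and by row closedness all positions $(a,l)$ with $l>a$ lie in $J$. So nothing is truncated here, and the added vector is $\alpha_i\,(A_{il})_{l>a}$. This splits into two cases.

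\begin{itemize}
\item[(a)] Columns $l\ge b$. Since $i\in\cR$, the rightmost nonzero entry of row $i$ is its main condition $(i,f)$ with $f<b$. Hence $A_{il}=0$ for all $l\ge b$.
\item[(b)] Columns $a<l<b$. Here $A_{il}\ne0$ forces $l\in\hC$ by the definition of $\hC$. So the restriction of row $i$ to columns in $(a,b)$ is exactly the extension by zeros of $\fr_i(\hG)=\fr_i(A_{\cR\times\hC})$, in the sense of Notation~\ref{colvects}(4),(5).
\end{itemize}

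Combining the two cases gives the claimed formula for $\fr_a(B'')$. The only real care needed is this bookkeeping: checking that row $i$ has no support at or beyond column $b$, and that truncation removes precisely the columns $\le a$. I do not expect any genuine obstacle beyond these checks.
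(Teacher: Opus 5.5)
Your proof is correct and follows the route the paper intends: the paper reads off the single-factor case $x_{ia}(\alpha)$ from the restricted row operation of Corollary~\ref{restricted action} (Illustrations~\ref{permaction} and~\ref{nmupic}) and then combines the factors by elementary linear algebra, which matches your reduction plus induction using that $X_{\col(a)}$ is abelian. Your column-by-column bookkeeping makes explicit what the paper calls immediate: rows $i\in\cR$ vanish from column $b$ onward, truncation removes only columns $\le a$, and their support strictly between columns $a$ and $b$ lies in $\hC$.
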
 
For $\y=(\alpha_i)_{i\in\cR}\in\F_q^{\cR}$, observe that 
\[ 
\y\hG=\sum_{i\in\cR}\alpha_i\fr_i(\hG)\in\F_q^{\hC}. 
\] 
Thus Lemma~\ref{key lin alg} implies:

\begin{Cor}\label{DefofLinearmap}
With the notation introduced in \ref{Gammadef}, define the linear maps
\[
\hL:\F_q^{\cR}\longrightarrow\F_q^{\hC}:
\y\longmapsto\y\hG\quad\text{and}\quad
L:\F_q^{\cR}\longrightarrow\F_q^{\cC}:
\y\longmapsto\y\G,
\]
where $\y\in\F_q^{\cR}$ is viewed as a row vector.
Moreover, define
\begin{equation}\label{singlecolumnchange}
x_\y=\prod_{i\in\cR}x_{ia}(-\y_i)\in X_\cR\leq U,
\text{ and }
\alpha x_\y=x_{\alpha\y}
\text{\, for }\alpha\in\F_q.
\end{equation}
Then, for $\y,\tilde\y\in\F_q^{\cR}$, 
$
 x_\y x_{\tilde\y}=x_{\y+\tilde\y}. 
$ 
 Thus, $X_\cR$ is an $\F_q$-vector space, and the map $\y\mapsto x_\y$ is an isomorphism from $\F_q^{\cR}$ onto $X_\cR$. Moreover, if $[B]=x_\y.[A]$, then $[B]$ coincides with $[A]$ except in row $a$, which is given by 
 $ 
 \fr_a(B)=\fr_a(A)+\hL(\y). 
$ 
 \hfill$\square$ \end{Cor}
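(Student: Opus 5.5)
The statement has three parts: $X_\cR$ is abelian and $x_\y x_{\tilde\y}=x_{\y+\tilde\y}$; the map $\y\mapsto x_\y$ is an isomorphism $\F_q^{\cR}\to X_\cR$, which makes $X_\cR$ an $\F_q$-space; and $x_\y.[A]$ differs from $[A]$ only in row $a$, which equals $\fr_a(A)+\hL(\y)$. I will prove them in that order, mostly by explicit matrix computation inside the column group $X_{\col(a)}$.

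\emph{Group law.} For $i\in\cR$ we have $(i,a)\in J$. Indeed, $i$ is a row carrying a main condition $(i,f)$ with $f>a$, and row closedness then puts every position $(i,l)$ with $i<l$ into $J$. So each $x_{ia}(\cdot)$ lies in $X_{\col(a)}$. For distinct $i,i'<a$ we compute $e_{ia}e_{i'a}=0$. Hence all these root elements commute, and
\[
\prod_{i\in\cR}x_{ia}(\alpha_i)=E+\sum_{i\in\cR}\alpha_ie_{ia}.
\]
It follows at once that $x_\y x_{\tilde\y}=E-\sum_i(\y_i+\tilde\y_i)e_{ia}=x_{\y+\tilde\y}$, and that the product does not depend on the ordering of $\cR$. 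The map $\y\mapsto x_\y$ is therefore a homomorphism. It is injective because $x_\y-E$ determines the $\y_i$, and it is surjective onto $X_\cR=\prod_{i\in\cR}X_{ia}$ by the same formula. Transporting the scalar multiplication along this bijection gives the $\F_q$-space structure, with $\alpha x_\y:=x_{\alpha\y}$ by definition.

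\emph{Action.} By Lemma~\ref{key lin alg}, applied with $\alpha_i=\y_i$, the character $x_\y.[A]$ agrees with $[A]$ outside row $a$, and its row $a$ is $\fr_a(A)+\sum_{i\in\cR}\y_i\fr_i(\hG)$. Here $\fr_i(\hG)$ is extended by zeros, as in Notation~\ref{colvects}(4). This sum is exactly $\y\hG=\hL(\y)$.

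To justify Lemma~\ref{key lin alg} without quoting it, I would use Lemma~\ref{dotstar}: $x_\y.A^*=(\pi_J(x_\y^{-t}A))^*$, with $x_\y^{-t}=E+\sum_i\y_ie_{ai}$. Then $x_\y^{-t}A=A+\sum_i\y_ie_{ai}A$ adds $\sum_i\y_i\fr_i(A)$ to row $a$. After applying $\pi_J$, only the entries of $\fr_i(A)$ in columns $l>a$ survive, since $(a,l)\in J$ exactly when $l>a$ for row $a$ in $J$.

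\emph{Main obstacle.} The hard part is showing that the entries of $\fr_i(A)$ in columns $l>a$ that lie outside $\hC$ all vanish. In columns $l\ge b$ this holds because $i\in\cR$ has its rightmost nonzero entry at a main condition $(i,f)$ with $f<b$. In columns $a<l<b$, any nonzero entry $A_{il}$ puts $l\in\hC$ by the definition of $\hC$. So the surviving entries are precisely the extension by zeros of $\fr_i(\hG)$, and the stated row formula follows.
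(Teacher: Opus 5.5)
Your proof is correct and follows the paper's route. The paper obtains the corollary directly from Lemma~\ref{key lin alg} together with the observation $\y\hG=\sum_{i\in\cR}\y_i\fr_i(\hG)$, and it leaves the group law $x_\y x_{\tilde\y}=x_{\y+\tilde\y}$ implicit in the fact that the column group $X_{\col(a)}$ is abelian, just as you do. Your extra direct check via Lemma~\ref{dotstar} is sound, including the vanishing of the entries of $\fr_i(A)$ in columns $l>a$ outside $\hC$, and it fills in what the paper leaves to Illustration~\ref{nmupic}.
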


\begin{Remark}\label{hL and L} Note that the rows of $\hG$ span $\im\hL$, and hence $\hL$ is injective by part (1) of Remark~\ref{nullmatrix hG}. However, $L$ is not injective in general. Observe also that $L=\pi\circ\hL$, where $\pi:\F_q^{\hC}\to\F_q^{\cC}$ is the natural projection mapping $\v\in\F_q^{\hC}$ to $\v_{\cC}\in\F_q^{\cC}$. Thus, for $i\in\cC$, we have $\hL(\y)_i=L(\y)_i$. In particular, if $0\neq\y\in\ker(L)$, then $0\neq\z=\hL(\y)$, but $\z_i=0$ for all $i\in\cC$. Therefore $\hL(\ker(L)) = \{\z\in\im\hL\mid \z_i=0\text{ for all }i\in\cC\}$. \hfill$\square$ \end{Remark}

We now need a fact from elementary linear algebra. For the moment, let $\hC$ and $\cR$ be arbitrary finite ordered sets with $|\cR|\leq|\hC|$, and let $\hG\in M_{\cR\times\hC}(q)$ be an arbitrary matrix of full rank $|\cR|$. Define the linear map $\hL:\F_q^\cR\to\F_q^{\hC}$ by $\hL(\v)=\v\hG$ for $\v\in\F_q^\cR$. Then $\hL$ is injective, and the rows $\{\fr_i(\hG)\mid i\in\cR\}$ of $\hG$ form a basis of $\im(\hL)\leq\F_q^{\hC}$.

\begin{Lemma}\label{linalg}
Let $\cZ\subseteq\hC$ be such that
$\{\fc_j(\hG)\mid j\in\cZ\}$ is a basis of the column space of $\hG$.
For each $i\in\hC\setminus\cZ$, write
\[
\fc_i(\hG)
=
\sum_{j\in\cZ}\gamma_{ij}\fc_j(\hG),
\]
where the coefficients $\gamma_{ij}\in\F_q$ are uniquely determined.

Then, for every choice of $\beta_j\in\F_q$, $j\in\cZ$, there exists
a unique $\y\in\F_q^\cR$ such that
$\z=\hL(\y)\in\F_q^{\hC}$ satisfies
\[
\z_i=
\begin{cases}
\beta_i, & \text{if } i\in\cZ,\\
\sum_{j\in\cZ}\gamma_{ij}\beta_j,
& \text{if } i\in\hC\setminus\cZ.
\end{cases}
\]
Moreover, every element of $\im(\hL)$ is obtained in this way.
\end{Lemma}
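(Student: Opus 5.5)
The approach is to restrict to the coordinates in $\cZ$ and show that this restriction is an isomorphism from $\im(\hL)$ onto $\F_q^{\cZ}$. Once that holds, the prescribed values at $\cZ$ determine everything else through the linear relations among the columns of $\hG$. Concretely, let $\pi_\cZ:\F_q^{\hC}\to\F_q^{\cZ}$ be the projection $\v\mapsto\v_\cZ$, and consider the composite $\pi_\cZ\circ\hL:\F_q^\cR\to\F_q^\cZ$. It is given by $\y\mapsto\y\,\hG_{\cR\times\cZ}$.

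First I would check that this composite is bijective. The columns $\fc_j(\hG)$, $j\in\cZ$, form a basis of the column space of $\hG$, and the column space has dimension $\rank(\hG)=|\cR|$. Hence $|\cZ|=|\cR|$, and the square matrix $\hG_{\cR\times\cZ}$ has linearly independent columns, so it is invertible. Therefore $\y\mapsto\y\hG_{\cR\times\cZ}$ is a bijection $\F_q^\cR\to\F_q^\cZ$. For any choice of $(\beta_j)_{j\in\cZ}$ this gives a unique $\y$ with $\hL(\y)_j=\beta_j$ for all $j\in\cZ$.

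Next I would verify the formula on the remaining coordinates. For $i\in\hC\setminus\cZ$, the $i$-th coordinate is $\z_i=\y\,\fc_i(\hG)$. Substituting $\fc_i(\hG)=\sum_{j\in\cZ}\gamma_{ij}\fc_j(\hG)$ and using linearity gives $\z_i=\sum_{j\in\cZ}\gamma_{ij}\,\y\fc_j(\hG)=\sum_{j\in\cZ}\gamma_{ij}\beta_j$, as claimed. The uniqueness of $\y$ asserted in the statement is exactly the injectivity of $\pi_\cZ\circ\hL$ established above.

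Finally, for the last assertion, every $\z\in\im(\hL)$ has the form $\hL(\y)$. Setting $\beta_j=\z_j$ for $j\in\cZ$ recovers $\z$ by the uniqueness just shown, so every element of $\im(\hL)$ arises in this way. The only step needing care is the dimension count $|\cZ|=|\cR|$, which uses the full-rank hypothesis on $\hG$; everything else is routine linear algebra.
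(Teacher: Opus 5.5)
Your proposal is correct and follows essentially the same argument as the paper. The paper also uses $\rank(\hG)=|\cR|=|\cZ|$ to show that the square submatrix $\hG_{\cR\times\cZ}$ is invertible. It then obtains the off-$\cZ$ coordinates from the relations $\fc_i(\hG)=\sum_{j\in\cZ}\gamma_{ij}\fc_j(\hG)$, and deduces the final assertion because the $\cZ$-entries of $\z$ determine $\y$ uniquely.
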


\begin{proof}
Since the columns $\{\fc_j(\hG)\mid j\in\cZ\}$ form a basis of the
column space of $\hG$ and
$\rank(\hG)=|\cR|=|\cZ|$, the submatrix
$
\Lambda=\hG_{\cR\times\cZ}
$
is square and invertible.

For a given choice of $\beta_j\in\F_q$, $j\in\cZ$, define
$\tilde\z\in\F_q^{\cZ}$ by $\tilde\z_j=\beta_j$ for $j\in\cZ$.
Then
\[
\y=\tilde\z\Lambda^{-1}\in\F_q^{\cR}
\]
is the unique vector satisfying $\y\Lambda=\tilde\z$.

Set $\z=\hL(\y)=\y\hG$. For $s\in\hC$, we have
$\z_s=\y\fc_s(\hG)$. In particular, if $j\in\cZ$, then
\[
\z_j
=\y\fc_j(\hG)
=\y\fc_j(\Lambda)
=\tilde\z_j
=\beta_j.
\]
For $i\in\hC\setminus\cZ$, we obtain
\[
\z_i = \y\fc_i(\hG) = \sum_{j\in\cZ}\gamma_{ij}\y\fc_j(\hG) =  \sum_{j\in\cZ}\gamma_{ij}\beta_j.
\]
Thus $\z$ has the required entries.

Conversely, if $\z\in\im(\hL)$, then $\z=\hL(\y)$ for some
$\y\in\F_q^\cR$, and its entries $\z_j$, $j\in\cZ$, determine
$\y$ uniquely, since $\Lambda$ is invertible. Hence every element of
$\im(\hL)$ is obtained uniquely in this way.
\end{proof}

We return to the situation of \ref{Gammadef}. 

\begin{Defn}\label{row bases}
Keep the notation introduced in \ref{Gammadef}. We now define certain
subsets of the columns of $\hG$, corresponding subsets of $\hC$, and
the intersections of these columns with row $a$. Recall that the
columns of $\G$ indexed by $\cC^\n$ are linearly independent.

\begin{enumerate}
\item[(1)] We define
$
\cB^\n=\{\fc_i(\G)\,|\,i\in\cC^\n\}$ and
$\cH^\n=\{(a,t)\,|\,t\in\cC^\n\}$.
Thus $\cH^\n$ is the set of normal main hook intersections on row $a$
of $[A]$.
\item[(2)] Choose a basis $\tilde\cB$ of the column space of $\G$
containing $\cB^\n$, and define
$
\cB^\fri=\tilde\cB\setminus\cB^\n.
$
Then set
\[
\cC^\fri=\{i\in\cC\,|\,\fc_i(\G)\in\cB^\fri\}
\quad\text{and}\quad
\cN^\fri=\{(a,i)\in J\,|\,\fc_i(\G)\in\cB^\fri\}.
\]

\item[(3)] We denote by
$\bC=\cC\setminus(\cC^\fri\cup\cC^\n)$
the set of remaining column labels of $\G$. We define
\[
\bB=\{\fc_i(\G)\mid i\in\bC\}
\quad\text{and}\quad
\bN=\{(a,t)\mid t\in\bC\}.
\]
Thus
\[
\bN\cup\cN^\fri
=\{(a,t)\mid t\in\bC\cup\cC^\fri\}
\]
is the set of normal supplementary conditions on row $a$ whose columns do not vanish above row~$a$. Note that none of these columns can contain
a main condition above row $a$, since then the corresponding position of row $a$ would be a main hook intersection rather than a supplementary condition.

\item[(4)] Now choose a basis $\hB\supseteq\tilde\cB$ of the column
space of $\hG$, and define
$\hB^\fa=\hB\setminus\tilde\cB$.
Thus $\hB^\fa$ consists of the column vectors in $\hB$ indexed by
nonnormal columns. We define
\[
\hC^\fa=\{i\mid\fc_i(\hG)\in\hB^\fa\}
\quad\text{and}\quad
\hH^\fa=\{(a,t)\in J\mid t\in\hC^\fa\}.
\]
Thus the basis $\hB$ of the column space of $\hG$ is the disjoint union
$
\hB=\cB^\n\cup\cB^\fri\cup\hB^\fa.
$

\item[(5)] Finally, we set
$\hC^\fri=\hC\setminus(\cC\cup\hC^\fa)$.
Then we define
\[
\hB^\fri=\{\fc_t(\hG)\mid t\in\hC^\fri\}
\quad\text{and}\quad
\hH^\fri=\{(a,t)\mid t\in\hC^\fri\}.
\]
Thus $\hH^\fri\cup\hH^\fa$ is the disjoint union of the sets of
nonnormal main hook intersections on row~$a$ indexed by
$\hC^\fri$ and $\hC^\fa$, respectively.\hfill$\square$
\end{enumerate}
\end{Defn}

\begin{Remark}\label{row bas prop}
Observe that
\[
\hC
=
\cC^\n\cup\cC^\fri\cup\bC\cup\hC^\fa\cup\hC^\fri
\]
is a set partition of $\hC$. Hence
$\hB^\fh=\cB^\n\cup\hB^\fa\cup\hB^\fri$
is a disjoint union as well.

The set $\hB^\fh$ consists of all nonnormal column vectors of $\hG$
together with those normal column vectors that contain a main condition,
and hence precisely of all column vectors of $\hG$ that contain a main
condition. These are linearly independent, since the
main conditions lie in pairwise different rows and columns. Moreover,
each row of $\hG$ contains precisely one main condition. Hence
$\hB^\fh$ has cardinality $|\cR|$ and is a basis of the column space
of $\hG$.

Since $\hB$ is also a basis of the column space of $\hG$, we have
\begin{equation}\label{comp bases}
|\hB|
=
|\cB^\n|+|\cB^\fri|+|\hB^\fa|
=
|\hB^\fh|
=
|\cB^\n|+|\hB^\fa|+|\hB^\fri|,
\end{equation}
and hence
$
|\cB^\fri|=|\hB^\fri|.
$
The corresponding positions on $h_{ab}^a$ are precisely the main hook
intersections
$
\hH^\fh
=
\cH^\n\cup\hH^\fa\cup\hH^\fri
$
on row $a$.
\hfill$\square$
\end{Remark}

Note that, for every $s\in\bC\cup\hC^\fri$, we can uniquely write
\begin{equation}\label{col deprel}
\fc_s(\hG)
=
\sum_{\fc_t(\hG)\in\hB}
\gamma_{st}\fc_t(\hG)
\end{equation}
with $\gamma_{st}\in\F_q$. If $s\in\bC$, then
$\fc_s(\hG)=\fc_s(\G)$ lies in the span of $\tilde\cB$, and hence
$\gamma_{st}=0$ for all $\fc_t(\hG)\in\hB^\fa$.
With this notation, Lemma~\ref{linalg} and the injectivity of $\hL$
(see Remark~\ref{hL and L}) imply:

\begin{Prop}\label{extspecial local}
For every choice of $\beta_s\in\F_q$ for
$(a,s)\in\cH^\n\cup\cN^\fri\cup\hH^\fa$, there exists a unique
extraspecial linear character $[B]\in\cOl_A$ which coincides with
$[A]$ at all positions except possibly those in row $a$. The entries
of row $a$ are given by
\begin{equation}\label{entries row a}
B_{as}
=
\begin{cases}
\beta_s,
&
\text{if }(a,s)\in\cH^\n\cup\cN^\fri\cup\hH^\fa,\\
A_{as}
+
\sum_{t\in\cC^\n\cup\cC^\fri\cup\hC^\fa}
\gamma_{st}(\beta_t-A_{at}),
&
\text{if }(a,s)\in\hH^\fri,\\
A_{as}
+
\sum_{t\in\cC^\n\cup\cC^\fri}
\gamma_{st}(\beta_t-A_{at}),
&
\text{if }(a,s)\in\bN,
\\
A_{as},
&
\text{otherwise}.
\end{cases}
\end{equation}
Moreover, every extraspecial linear character which coincides with
$[A]$ except possibly in row $a$ is obtained by a unique choice of
the entries $\beta_s\in\F_q$.
\end{Prop}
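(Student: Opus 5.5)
The plan is to reduce the statement to Lemma~\ref{linalg}, applied to $\hG$ with the basis $\cZ=\cC^\n\cup\cC^\fri\cup\hC^\fa$, i.e.\ with the columns in $\hB$. By Proposition~\ref{mainpreserved}, every special $[B]\in\cOl_A$ that agrees with $[A]$ outside row $a$ arises from $[A]$ through special intermediate steps. The only root subgroups that change row $a$ are the $X_{ia}$ of the column group $X_{\col(a)}$. I will first argue that only $i\in\cR$ can be used without destroying speciality or altering other rows in a way that cannot be undone. Rows $i$ whose main condition lies right of column $b$ are excluded, because they would create a nonzero entry to the right of $(a,b)$, as shown for row $c$ in Setup~\ref{Gammadef}. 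Rows $i$ whose main condition lies left of column $a$, and zero rows, contribute nothing between columns $a$ and $b$.

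More carefully, if $[B]=x.[A]$ with $x\in X_{\col(a)}$ and $B$ differs from $A$ only in row $a$, then
\[
\fr_a(B)=\fr_a(A)+\sum_i\alpha_i\fr_i(A).
\]
I will show that the coefficients $\alpha_i$ for $i\notin\cR$ must vanish, or else contribute only zero. For the rows with main conditions right of $b$, a nonzero coefficient would put a nonzero entry in row $a$ to the right of $b$. This uses that the main conditions lie in distinct columns, so the rightmost such contribution cannot cancel. Hence row $a$ is $\fr_a(A)+\hL(\y)$ for some $\y\in\F_q^{\cR}$, by Corollary~\ref{DefofLinearmap}. Conversely, every $\y$ gives an element $x_\y.[A]\in\cOl_A$ differing from $[A]$ only in row $a$, and only in the columns $\hC$.

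Next I check extraspeciality. Since $\hC$ lies strictly between $a$ and $b$, the main condition $(a,b)$ stays rightmost in row $a$, so $x_\y.[A]$ is special with the same main conditions. The $y$-conditions of row $a$ are nonnormal, non-intersection positions. Their columns have no nonzero entries in rows of $\cR$, because $[A]$ is a normal template. Hence these columns lie outside $\hC$, remain zero, and $[B]$ is extraspecial. Lemma~\ref{normtempl yconds} gives this directly as well.

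It remains to parametrize $\im\hL$. The set $\hB=\cB^\n\cup\cB^\fri\cup\hB^\fa$ is a basis of the column space of $\hG$ indexed by $\cZ$. I apply Lemma~\ref{linalg} with $\beta'_t=\beta_t-A_{at}$ for $t\in\cZ$. This gives a unique $\y$ with $\hL(\y)_t=\beta_t-A_{at}$ for $t\in\cZ$. For $s\notin\cZ$ it gives $\hL(\y)_s=\sum_{t\in\cZ}\gamma_{st}(\beta_t-A_{at})$, using the coefficients from (\ref{col deprel}). The resulting formula is (\ref{entries row a}). For $s\in\bC$ we use $\gamma_{st}=0$ when $t\in\hC^\fa$, and columns outside $\hC$ are unchanged. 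Uniqueness follows from the injectivity of $\hL$ (Remark~\ref{hL and L}), together with the fact that the values at $\cZ$ determine $\y$. The converse statement follows from the first step, which shows that every such $[B]$ has row $a$ in $\fr_a(A)+\im\hL$.

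The main obstacle is that first step: justifying that no other element of $X_{\col(a)}$ yields further extraspecial characters agreeing with $[A]$ off row $a$. This requires the triangular, distinct-columns argument on main conditions to force all non-$\cR$ coefficients that matter to vanish.
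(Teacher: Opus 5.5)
Your proposal is correct and follows essentially the paper's route: apply Lemma~\ref{linalg} to $\hG$ with the basis $\hB$ indexed by $\cC^\n\cup\cC^\fri\cup\hC^\fa$ and shifted values $\beta_t-A_{at}$, realize the result as $x_\y.[A]$ via Corollary~\ref{DefofLinearmap}, and use $\gamma_{st}=0$ for $s\in\bC$, $t\in\hC^\fa$. Your distinct-columns argument, which excludes rows whose main condition lies right of column $b$, makes the converse more explicit than the paper's brief appeal to Lemma~\ref{linalg} and injectivity of $\hL$; the one step you and the paper both take for granted (that $u.[A]$ agreeing with $[A]$ off row $a$ forces $u.[A]=u_a.[A]$ with $u_a\in X_{\col(a)}$) follows from the bottom-to-top factorization $u=u_2\cdots u_{n-1}$ used in the proof of Proposition~\ref{premain 4}.
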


\begin{proof}
Recall first that, apart from the main condition, the only nonzero
entries in row $a$ of $[A]$ are normal supplementary conditions, since
$[A]$ is a normal template. Thus $A_{as}\neq 0$ implies
$(a,s)\in\cN^\fri\cup\bN$. Set
\begin{equation}\label{deltas}
\delta_s=
\begin{cases}
\beta_s,
& \text{if }(a,s)\in\cH^\n\cup\hH^\fa,\\
\beta_s-A_{as},
& \text{if }(a,s)\in\cN^\fri.
\end{cases}
\end{equation}
Then
$\delta_s=\beta_s-A_{as}$ for all
$(a,s)\in\cH^\n\cup\cN^\fri\cup\hH^\fa$.

Recall the injective linear map
$\hL:\F_q^{\cR}\to\F_q^{\hC}$, $\y\mapsto\y\hG$, defined in
\ref{DefofLinearmap}. Applying Lemma~\ref{linalg}, we obtain a unique
$\y\in\F_q^\cR$ such that $\hL(\y)=\v\in\F_q^{\hC}$ satisfies
$v_s=\delta_s$ for all
$s\in\cC^\n\cup\cC^\fri\cup\hC^\fa$.
For $s\in\bC\cup\hC^\fri$, we have
\begin{equation}\label{vau sub s}
\v_s
=
\sum_{t\in\cC^\n\cup\cC^\fri\cup\hC^\fa}
\gamma_{st}\delta_t
=
\sum_{t\in\cC^\n\cup\cC^\fri\cup\hC^\fa}
\gamma_{st}(\beta_t-A_{at}).
\end{equation}

By Corollary~\ref{DefofLinearmap},
$[B]=x_\y.[A]$ satisfies
$\fr_a(B)=\fr_a(A)+\y\hG=\fr_a(A)+\v$.
Hence
$B_{as}=A_{as}+\delta_s=\beta_s$ for
$(a,s)\in\cH^\n\cup\cN^\fri\cup\hH^\fa$.
For $(a,s)\in\bN\cup\hH^\fri$, equation~\ref{vau sub s} gives
\[
B_{as}
=
A_{as}
+
\sum_{t\in\cC^\n\cup\cC^\fri\cup\hC^\fa}
\gamma_{st}(\beta_t-A_{at}),
\]
while all remaining entries of $[A]$ are unchanged. Moreover, \ref{col deprel} implies that $\gamma_{st}=0$ whenever $s\in\bC$ and
$t\in\hC^\fa$. This proves
Equation~\ref{entries row a}.

By construction, $[B]\in\cOl_A$ is extraspecial. Conversely, every
extraspecial linear character in $\cOl_A$ which coincides with $[A]$
outside row $a$ arises from a unique choice of the entries $\beta_s$,
by Lemma~\ref{linalg} and the injectivity of $\hL$. This proves the
last assertion.
\end{proof}

Here is a special case, which will be needed later and
follows immediately from Equation~\ref{entries row a}:

\begin{Cor}\label{choice rl}
Choose $\beta_s=A_{as}$ for
$(a,s)\in\cH^\n\cup\cN^\fri$. Then $B_{as}=A_{as}$ for
$(a,s)\in\bN$ as well. Hence $[B]$ and $[A]$ coincide at all normal
positions in row $a$.
\hfill$\square$
\end{Cor}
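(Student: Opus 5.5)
The claim follows by substituting the special choice $\beta_s=A_{as}$ for $(a,s)\in\cH^\n\cup\cN^\fri$ into Equation~\ref{entries row a} of Proposition~\ref{extspecial local}. The free parameters $\beta_t$ for $(a,t)\in\hH^\fa$ remain arbitrary. They play no role here, because they enter only through the second case, which concerns the nonnormal positions $\hH^\fri$.

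The key step is the third case of Equation~\ref{entries row a}. For $(a,s)\in\bN$ it reads
\[
B_{as}=A_{as}+\sum_{t\in\cC^\n\cup\cC^\fri}\gamma_{st}(\beta_t-A_{at}).
\]
Every $t$ in this sum satisfies $(a,t)\in\cH^\n\cup\cN^\fri$, so the special choice gives $\beta_t-A_{at}=0$. Hence each summand vanishes and $B_{as}=A_{as}$. It matters that the sum excludes $t\in\hC^\fa$. This exclusion was justified just before Proposition~\ref{extspecial local}: for $s\in\bC$ the column $\fc_s(\hG)$ lies in the span of $\tilde\cB$, so $\gamma_{st}=0$ whenever $t\in\hC^\fa$. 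That is exactly why the $\hH^\fa$-parameters cannot disturb the normal entries.

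It then remains to check that these cases exhaust all normal positions of row $a$. The normal positions $(a,s)\in J$ with $a<s<b$ and $s\in\hC$ are precisely those with $s\in\cC=\cC^\n\cup\cC^\fri\cup\bC$, that is, the positions in $\cH^\n\cup\cN^\fri\cup\bN$. The first two sets are handled by the choice itself, and $\bN$ by the computation above. Every other normal position of row $a$ (with $s\notin\hC$, or $s$ outside $(a,b)$) falls under the ``otherwise'' case, where $B_{as}=A_{as}$. Positions outside row $a$ are unchanged by Proposition~\ref{extspecial local}.

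The only point requiring care is this bookkeeping of the partition of $\hC$ from Remark~\ref{row bas prop} against the normal/nonnormal split, and in particular confirming that $\hH^\fri$ and $\hH^\fa$ consist of nonnormal positions. This holds by their definitions in \ref{row bases}(4) and (5), so I expect no real obstacle.
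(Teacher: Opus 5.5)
Your proposal is correct and is essentially the paper's own argument: the paper just notes that the corollary follows immediately from Equation~\ref{entries row a}, and the key point, as you say, is that the $\bN$-case sum runs only over $t\in\cC^\n\cup\cC^\fri$, since $\gamma_{st}=0$ for $s\in\bC$ and $t\in\hC^\fa$. One small wording slip: the $\hH^\fa$-parameters enter not only through the $\hH^\fri$-case but also directly through the first case at the positions $\hH^\fa$ themselves; these positions are nonnormal too, so your conclusion is unaffected.
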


So far, we have concentrated on a single main hook $h_{ab}$ of $[A]$.
We now combine the constructions for all main hooks of $[A]$. We first
introduce some notation:

\begin{Notation}\label{setup 2}
Let
$\p[A]=\p=\{(i_1,j_1),\ldots,(i_k,j_k\})$. We order $\p$ from bottom
to top so that $i_1>i_2>\cdots>i_k$, in contrast to the ordering \ref{p order 1} of
$\p$ used in Proposition~\ref{rplacebij}.

For $1\leq\mu\leq k$, we now attach the subscript $\mu$ to the objects
associated with the main hook $h_{i_\mu j_\mu}$ that were defined in
\ref{Gammadef}, \ref{DefofLinearmap}, and \ref{row bases}. 
\hfill$\square$
\end{Notation}

Note that all rows $\fr_i(A)$ with $i<i_k$ are zero rows, and hence
$\hG_k$ is the empty matrix (cf. part (4) of Remark ~\ref{nullmatrix hG}). Consequently,
$\fr_{i_k}(B)=\fr_{i_k}(A)$ for every extraspecial linear character
$[B]\in\cOl_A$.

We can now apply the method of Proposition~\ref{extspecial local} to
construct all extraspecial linear characters in $\cOl_A$, working
through the main hook arms from bottom to top. Set $[B_0]=[A]$, and
suppose that, for $1\leq\mu\leq k-1$, we have constructed an
extraspecial linear character $[B_{\mu-1}]\in\cOl_A$ by successively
changing entries at main hook intersections and normal supplementary
conditions on the hook arms
$h_1^a,\ldots,h_{\mu-1}^a$ using Proposition~\ref{extspecial local}.

To manipulate the entries on $h_\mu^a$, observe that no entry above
row $i_{\mu-1}$ has been changed in the construction of
$[B_{\mu-1}]$. Hence
\[
(B_{\mu-1})_{\cR_\mu\times\hC_\mu}
=
A_{\cR_\mu\times\hC_\mu}
=
\hG_\mu,
\]
and, in particular,
$\fr_{i_\mu}(B_{\mu-1})=\fr_{i_\mu}(A)$.

Working from bottom to top through the main hook arms ensures that all
submatrices, main hook legs, and linear maps involved are determined
by $[A]$ from the outset. Consequently, it makes no difference whether
we regard the procedure as successively manipulating the main hook
arms $h_1^a,h_2^a,\ldots,h_k^a$ in passing from
$[B_0]=[A]$ to $[B_1],\ldots,[B_k]$, or as manipulating the hook arms
$h_\mu^a$, $1\leq\mu\leq k$, of $[A]$ independently of one another.

In Setup \ref{Gammadef}  and Definition~\ref{row bases}, we defined certain sets of columns of the matrices
$\hG_\mu$ and corresponding positions on the main hook arms $h_\mu^a$,
for $\mu=1,\ldots,k$, omitting the indices associated with the
individual main conditions. Henceforth, the corresponding symbols
without indices will denote the unions of these sets over all main
conditions.

\begin{Defn}\label{Hook inters}
Recall that
$\cH_\mu^\n=\{(i_\mu,s)\in J\mid s\in\cC_\mu^\n\}$
denotes the set of normal main hook intersections on the hook arm
$h_\mu^a$, as defined in part (1) of Definition~\ref{row bases}.
We set
\[
\cH^\n
=
\cH_1^\n\cup\cH_2^\n\cup\cdots\cup\cH_k^\n.
\]
Thus $\cH^\n$ is the set of all normal main hook intersections of
$\p$. Similarly, we set
\[
\cN^\fri=\bigcup_{\mu=1}^k\cN_\mu^\fri
\qquad\text{and}\qquad
\bN=\bigcup_{\mu=1}^k\bN_\mu.
\]
The sets $\hH^\fa$, $\hH^\fri$, $\cC^\n$, and the other sets introduced
in Setup~\ref{Gammadef} and Definition~\ref{row bases} are defined
analogously as the disjoint unions of their local parts associated
with the individual main hook arms.

Moreover, we abbreviate
$
\cH=\cH^\n\cup\cN^\fri\cup\hH^\fa.
$
Note that $\cN^\fri\cup\bN$ is the set of normal supplementary
conditions.
\hfill$\square$
\end{Defn}

Proposition~\ref{extspecial local} and Corollary~\ref{choice rl} now
imply immediately the following:

\begin{Theorem}\label{extspecial global}
For every choice of
$\bbeta=(\beta_{rs})\in\F_q^\cH$, there exists a unique extraspecial
linear character $[A(\bbeta)]=[B]\in\cOl_A$ satisfying:
\begin{enumerate}
\item[(i)] $B_{rs}=\beta_{rs}$ for all $(r,s)\in\cH$.

\item[(ii)] The entries $B_{i_\mu t}$ at positions
$(i_\mu,t)\in\hH^\fri_\mu\cup\bN_\mu$, for
$\mu=1,\ldots,k$, are determined by Equation~\ref{entries row a}
applied to the main hook $h_{i_\mu j_\mu}$.

\item[(iii)] $B_{rs}=A_{rs}$ for all remaining positions
$(r,s)\in J$.
\end{enumerate}
Every extraspecial linear character in $\cOl_A$ is obtained in this
way for a unique choice of $\bbeta\in\F_q^\cH$.

Moreover, if $\beta_{rs}=A_{rs}$ for all
$(r,s)\in\cH^\n\cup\cN^\fri$, then $B_{rs}=A_{rs}$ for all
$(r,s)\in\bN$ as well.
\hfill$\square$
\end{Theorem}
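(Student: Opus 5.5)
We obtain Theorem~\ref{extspecial global} by iterating Proposition~\ref{extspecial local} over the main hooks, ordered bottom to top as in Notation~\ref{setup 2}. Given $\bbeta\in\F_q^\cH$, split it into the local families $\bbeta_\mu=(\beta_{i_\mu s})$ indexed by $\cH_\mu^\n\cup\cN_\mu^\fri\cup\hH_\mu^\fa$. Put $[B_0]=[A]$. Inductively, let $\y_\mu\in\F_q^{\cR_\mu}$ be the unique vector supplied by Lemma~\ref{linalg}, as in the proof of Proposition~\ref{extspecial local}, for the data $\hG_\mu$ and $\bbeta_\mu$. Then set $[B_\mu]=x_{\y_\mu}.[B_{\mu-1}]$, with $x_{\y_\mu}\in X_{\cR_\mu}\le X_{\col(i_\mu)}$.

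The key point is that $x_{\y_\mu}$ changes only row $i_\mu$. It does so by adding $\y_\mu\hG_\mu$, where $\hG_\mu$ is built from rows $\cR_\mu$, all strictly above $i_\mu$. Since $i_1>i_2>\cdots$, earlier steps modified only rows $i_1,\dots,i_{\mu-1}$, which lie below $i_\mu$. Hence they cannot affect rows $<i_\mu$, and they cannot affect row $i_\mu$ itself. So $(B_{\mu-1})_{\cR_\mu\times\hC_\mu}=\hG_\mu$ and $\fr_{i_\mu}(B_{\mu-1})=\fr_{i_\mu}(A)$, as already observed before Definition~\ref{Hook inters}. Consequently Corollary~\ref{DefofLinearmap} and equation~\ref{entries row a} apply verbatim to $[B_{\mu-1}]$ with the data of $[A]$. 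Row $i_\mu$ of $[B_\mu]$ is then exactly the row described in Proposition~\ref{extspecial local} for hook $h_{i_\mu j_\mu}$, and rows $i_\nu$ with $\nu<\mu$ are untouched later. Setting $[B]=[B_k]\in\cOl_A$ gives (i), (ii) and (iii). Extraspeciality holds because each row $i_\mu$ is modified only at positions of $\hC_\mu$, which lie strictly between $i_\mu$ and $j_\mu$. Thus the main conditions remain the rightmost nonzero entries, no zero row becomes nonzero, and Lemma~\ref{normtempl yconds} forces the $y$-conditions to vanish.

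Uniqueness given $\bbeta$ is clear, since (i)--(iii) specify every entry of $B$. For surjectivity, let $[C]\in\cOl_A$ be extraspecial. By Proposition~\ref{mainpreserved}, $[C]$ is reached from $[A]$ by restricted row operations through special characters. Arguing as in Setup~\ref{Gammadef}, the only operations available on row $i_\mu$ that keep specialness come from $X_{\cR_\mu}$, so $[C]$ differs from $[A]$ only in the rows $i_\mu$ by vectors lying in $\im\hL_\mu$. I expect this step to be the main obstacle. The cleanest route is a counting argument: the rows of a general element of $\cOl_A$ change via the whole column groups $X_{\col(i_\mu)}$, and one must check that any component outside $X_{\cR_\mu}$ either produces entries right of $j_\mu$ or lands in a non-extraspecial character. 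Given this, reading off $\beta_{rs}=C_{rs}$ on $\cH$ and applying the uniqueness part of Proposition~\ref{extspecial local} hook by hook, bottom to top, gives $[C]=[A(\bbeta)]$ with $\bbeta$ unique.

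The final assertion follows at once by applying Corollary~\ref{choice rl} to each hook $h_{i_\mu j_\mu}$.
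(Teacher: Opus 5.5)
Your existence construction and the final clause match the paper. The paper also applies Proposition~\ref{extspecial local} hook by hook from bottom to top, using $(B_{\mu-1})_{\cR_\mu\times\hC_\mu}=\hG_\mu$ and $\fr_{i_\mu}(B_{\mu-1})=\fr_{i_\mu}(A)$, and it gets the last assertion from Corollary~\ref{choice rl}.

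The gap is the converse, that every extraspecial $[C]\in\cOl_A$ equals some $[A(\bbeta)]$. You state this only conditionally (``Given this, \dots''). Two things must be shown: that $[C]$ agrees with $[A]$ outside the rows $i_\mu$, and that $\fr_{i_\mu}(C)-\fr_{i_\mu}(A)$ is the zero extension of a vector in $\im\hL_\mu$. Citing Setup~\ref{Gammadef} does not give this. That remark only shows that a single operation $x_{ca}(\gamma)$ with $c\notin\cR$, applied to $[A]$ itself, destroys specialness. Along a path of special intermediates from Proposition~\ref{mainpreserved}, the rows you add to row $i_\mu$ are rows of the intermediate characters, not of $A$. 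So the accumulated change still has to be controlled. The counting argument you allude to is not carried out, and no independent count of the extraspecial elements of $\cOl_A$ is at hand. The paper passes over this step as immediate, via the converse in Proposition~\ref{extspecial local}, but as you suspected it needs an argument.

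It closes quickly if you avoid paths altogether, as in the proof of Proposition~\ref{premain 4}.
\begin{enumerate}
\item[(1)] Write $[C]=u.[A]$ with $u=u_2u_3\cdots u_{n-1}$ and $u_i\in X_{\col(i)}$. Then $u_{n-1}$ acts first and each $u_i$ changes only row $i$. When $u_i$ acts, every row $d<i$ is still a row of $A$, so
\[
\fr_i(C)=\fr_i(A)-\sum_{d<i}\zeta_{di}\,\fr_d(\Delta_i[A]).
\]
\item[(2)] By Lemma~\ref{mainpreserved 1}, $\main[C]=\main[A]$. Hence the rows of $C$ without a main condition vanish, and row $i_\mu$ of $C$ vanishes to the right of $j_\mu$.
\item[(3)] The nonzero rows of $\Delta_{i_\mu}[A]$ come from main conditions $(i_\nu,j_\nu)$ with $i_\nu<i_\mu<j_\nu$. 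Their rightmost nonzero entries lie in the pairwise distinct columns $j_\nu$.
\item[(4)] Suppose some $\nu$ with $\zeta_{i_\nu i_\mu}\neq0$ had $j_\nu>j_\mu$. Choose such a $\nu$ with $j_\nu$ maximal among all indices with nonzero coefficient. Then $C_{i_\mu j_\nu}=-\zeta_{i_\nu i_\mu}A_{i_\nu j_\nu}\neq0$, contradicting (2).
\item[(5)] Hence only rows $i_\nu\in\cR_\mu$ contribute. So $\fr_{i_\mu}(C)-\fr_{i_\mu}(A)$ is the zero extension of $\hL_\mu(\y)$ for $\y=(-\zeta_{e i_\mu})_{e\in\cR_\mu}$.
\item[(6)] Set $\beta_{rs}=C_{rs}$ for $(r,s)\in\cH$. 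By Lemma~\ref{linalg}, an element of $\im\hL_\mu$ is determined by its entries on $\cC^\n_\mu\cup\cC^\fri_\mu\cup\hC^\fa_\mu$, since these index the basis $\hB_\mu$. This gives $[C]=[A(\bbeta)]$, and the uniqueness of $\bbeta$ is forced by (i).
\end{enumerate}
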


\begin{Remark}
Instead of prescribing the values $\beta_{rs}\in\F_q$ for
$(r,s)\in\cH=\cH^\n\cup\cN^\fri\cup\hH^\fa$ in
Theorem~\ref{extspecial global}, we may use the alternative basis
$\hB^\fh$ of Remark~\ref{row bas prop} and prescribe the entries at
the positions in
$\hH^\fh=\cH^\n\cup\hH^\fri\cup\hH^\fa$, the set of all main hook
intersections.
\hfill$\square$
\end{Remark}

Recall from \ref{Q sets} that, for $W\subseteq\V$, $\fE(W)$ denotes
the set of extraspecial linear characters in $W$.

\begin{Notation}\label{part choices}
For $\cW\subseteq\cH$, set
\[
\fE^\cW
=
\{[A(\bbeta)]\mid
\bbeta=(\beta_{rs})\in\F_q^\cH,\ 
\beta_{rs}=A_{rs}\text{ for all }(r,s)\in\cH\setminus\cW\}.
\]
Thus $\fE^\cW$ consists of those extraspecial linear characters in
$\cOl_A$ for which only the entries indexed by $\cW$ are allowed to
vary. Note that $\fE^\cH$ is the set of all extraspecial linear characters in $\cOl_A$ by \ref{extspecial global}.

For the particular choices
$\cW=\cH^\n,\cN^\fri$, and $\hH^\fa$, we abbreviate
$
\fE^{\cH^\n}=\fE^\n,\,
\fE^{\cN^\fri}=\fE^\fri,\,
\fE^{\hH^\fa}=\fE^\fa,
$
respectively.
\hfill$\square$
\end{Notation}

\section{Decomposing $\cObi_A$ into Left Orbits}

Recall from Theorem~\ref{nker suff} that, in order to decompose $\cObi_A$ into
left orbits, it suffices to determine the special linear characters in
$\cOl_A$ that vanish at all normal main hook intersections. In the
previous section, we constructed all extraspecial linear characters in
$\cOl_A$. We now make suitable choices for the entries at the main hook
intersections and normal supplementary conditions in
Theorem~\ref{extspecial global}, and then at the $y$-conditions using
Proposition~\ref{normalenough}, to obtain a complete classification of
the left orbits in $\cObi_A$.

Note that different extraspecial linear characters $[B]\in\cOl_A$
which vanish at normal main hook intersections may map to the same
normal template under the surjective map $\cT_A$ of
Theorem~\ref{nker suff}. Thus, in order to use these characters to
classify the left orbits in $\cObi_A$, we have to determine the
fibres of $\cT_A$. For this purpose, we first inspect
$
\cOrl_A=\cOr_A\cap\cOl_A
$
in view of \ref{HomSpace}. 

Recall from Proposition~\ref{rplacebij}
that the right orbit $\cOr_A$ consists entirely of special linear
characters. Moreover, by \ref{normalfixedfromright}, all
$[B]\in\cOr_A$ have the same normal kernel,
$
\nker[B]=\nker[A].
$
 We define

\begin{equation}\label{auto vectors}
\cA(A)=\cA
=
\left\{
\bbeta=(\beta_{rs})\in\F_q^\cH
\,\middle|\,
\beta_{rs}=A_{rs}
\text{ for all }(r,s)\in\cH^\n\cup\cN^\fri
\right\}.
\end{equation}

Thus, $\bbeta\in\F_q^\cH$ belongs to $\cA$ if and only if
$\beta_{rs}=0$ for $(r,s)\in\cH^\n$,
$\beta_{rs}=A_{rs}$ for $(r,s)\in\cN^\fri$, and
$\beta_{rs}\in\F_q$ is arbitrary for $(r,s)\in\hH^\fa$.

\begin{Theorem}\label{main1}
Let $\bbeta=(\beta_{rs})\in\cA$. Then there exists a unique
extraspecial linear character
$[A(\bbeta)]=[B]\in\cOrl_A$ satisfying:
\begin{enumerate}
\item[(i)] $B_{ij}\neq A_{ij}$ implies
$(i,j)\in\hH^\fa\cup\hH^\fri$;

\item[(ii)] $B_{rs}=\beta_{rs}$ for
$(r,s)\in\hH^\fa$;

\item[(iii)] the entries $B_{i t}$ at
$(i,t)\in\hH^\fri\cup\bN$ are determined by
Equation~\ref{entries row a}; in particular,
$B_{it}=A_{it}$ for
$(i,t)\in\bN$, hence $\nker[B]=\nker[A]$.
\end{enumerate}
Moreover, every $[B]\in\cOrl_A$ is obtained in this way and 
\[
\cOrl_A
=
\{[A(\bbeta)]\mid\bbeta\in\cA\}
=
\fE^\fa.
\]
In particular,
$|\cOrl_A|=q^\alpha$, where
$\alpha=|\hH^\fa|$.
\end{Theorem}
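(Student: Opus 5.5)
The plan is to identify $\cOrl_A$ inside the set $\fE^\cH$ of all extraspecial linear characters in $\cOl_A$ given by Theorem~\ref{extspecial global}, and then cut it down by the constraints that membership in $\cOr_A$ imposes.

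First, every element of $\cOr_A$ is special with $\main=\main[A]$ by Proposition~\ref{rplacebij}. By Lemma~\ref{normtempl yconds}, every special element of $\cOl_A$ is extraspecial. Hence $\cOrl_A\subseteq\fE^\cH$, so each $[B]\in\cOrl_A$ equals $[A(\bbeta)]$ for a unique $\bbeta\in\F_q^\cH$.

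Next we show that $\bbeta\in\cA$. Since $[B]\in\cOr_A$, Remark~\ref{normalfixedfromright} gives $\nker[B]=\nker[A]$. The positions in $\cH^\n\cup\cN^\fri$ are normal, so $\beta_{rs}=B_{rs}=A_{rs}$ there, and in particular $\beta_{rs}=0$ on $\cH^\n$ because $[A]$ is a template. Thus $\bbeta\in\cA$. Properties (i)--(iii) then follow directly from Theorem~\ref{extspecial global}:
\begin{itemize}
\item[] the final clause of that theorem gives $B=A$ on $\bN$, hence $\nker[B]=\nker[A]$;
\item[] the only remaining positions where $B$ may differ from $A$ are $\hH^\fa\cup\hH^\fri$;
\item[] the entries on $\hH^\fri$ are given by Equation~\ref{entries row a}.
\end{itemize}
This proves the inclusion $\cOrl_A\subseteq\{[A(\bbeta)]\mid\bbeta\in\cA\}=\fE^\fa$, where the last equality is just Notation~\ref{part choices}.

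The main obstacle is the reverse inclusion: for every $\bbeta\in\cA$, the character $[B]=[A(\bbeta)]$, which lies in $\cOl_A$ by construction, must also lie in $\cOr_A$. To show this, use Proposition~\ref{rplacebij}, which describes $\cOr_A$ completely. An element $[C]$ lies in $\cOr_A$ exactly when it has the same main conditions and values as $A$, agrees with $A$ at all normal positions, has arbitrary entries at the $r$-places $\rPl(\p)$, and vanishes elsewhere. So we must check four things for $[B]$.
\begin{enumerate}
\item[(a)] $B$ agrees with $A$ on the main conditions. This holds because $B$ differs from $A$ only on $\hH^\fa\cup\hH^\fri$, which are main hook intersections and not main conditions.
\item[(b)] $B$ agrees with $A$ on all normal positions. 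By (i), $B$ differs from $A$ only on $\hH^\fa\cup\hH^\fri$, and these sets consist of nonnormal positions by Definition~\ref{row bases}(4),(5).
\item[(c)] Every position where $B$ may differ from $A$ is an $r$-place. The positions in $\hH^\fa\cup\hH^\fri$ are nonnormal positions on reduced main hook arms, so they lie in $\rPl(\p)$, where Proposition~\ref{rplacebij} allows arbitrary values.
\item[(d)] All other nonnormal positions of $B$ vanish. These entries equal those of $A$, and since $A$ is a normal template they are $0$ at nonnormal positions off the main conditions.
\end{enumerate}
By the uniqueness statement of Proposition~\ref{rplacebij}, $[B]$ is then the element of $\cOr_A$ with $r$-place entries $B_{rs}$, so $[B]\in\cOr_A$. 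The one point needing care is that $A$ itself has zero entries at the nonnormal main hook intersections, since $A$ is a template; this is what puts it in the correct form for the comparison.

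Finally, the correspondence $\bbeta\mapsto[A(\bbeta)]$ is injective by Theorem~\ref{extspecial global}. The set $\cA$ is parametrised freely by the coordinates in $\hH^\fa$, so $|\cOrl_A|=|\cA|=q^{|\hH^\fa|}$.
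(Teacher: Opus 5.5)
Your proof is correct and follows essentially the same route as the paper. Both directions rest on Theorem~\ref{extspecial global}: membership of $[A(\bbeta)]$ in $\cOr_A$ for $\bbeta\in\cA$ is checked via Proposition~\ref{rplacebij} (same main conditions and same normal kernel), and for the converse the constancy of $\nker$ on right orbits (Remark~\ref{normalfixedfromright}) forces $\bbeta\in\cA$; you simply spell out in more detail the check that the paper compresses into ``Proposition~\ref{rplacebij} now yields $[B]\in\cOr_A$''.
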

 
 \begin{proof}
By Theorem~\ref{extspecial global}, for every $\bbeta\in\cA$ there
exists a unique extraspecial linear character
$[B]=[A(\bbeta)]\in\cOl_A$ with the prescribed entries in
$\hH^\fa$, while the remaining entries on the main hook arms are
determined by Equation~\ref{entries row a}.
Applying Corollary~\ref{choice rl} to all main hooks of $[A]$ shows
that $B_{rs}=A_{rs}$ at all normal positions $(r,s)\in J$. Hence
$\nker[B]=\nker[A]$. Proposition~\ref{rplacebij} now yields
$[B]\in\cOr_A$, and therefore $[B]\in\cOrl_A$.

If $[C]\in\cOrl_A$, then in particular $[C]\in\cOl_A$ is extraspecial
with $\nker[C]=\nker[A]$. Hence the entries of $[C]$ on
$\cH^\n\cup\cN^\fri$ agree with those of $[A]$. Applying
Theorem~\ref{extspecial global}, we see immediately that $[C]$ is one
of the linear characters $[B]\in\cOrl_A$ constructed above and the
the theorem follows by \ref{HomSpace}.
\end{proof}

Applying \ref{HomSpace}, we have:

\begin{Cor}\label{endoring} The elements of $\fE^\fa$ determine bases of $\End_{\C U}(\C\cOr_A)$ and $\End_{\C U}(\C\cOl_A)$.
\hfill$\square$\end{Cor}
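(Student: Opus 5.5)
This follows by combining Theorem~\ref{main1} with Remark~\ref{HomSpace}. The plan is to identify $\cOrl_A$ as a set and then transport this identification to the endomorphism rings.

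First, specialise Remark~\ref{HomSpace} to $A=B$. It gives $\End_{\C U}(\C\cOr_A)\cong\C\cOl_A\cap\C\cOr_A$. By Remark~\ref{basicLA}, applied in $\C\V$ with the monomial basis $\V$, this intersection equals $\C(\cOl_A\cap\cOr_A)=\C\cOrl_A$. One point needs care: the monomial action only rescales basis elements by nonzero scalars $\theta(\cdot)$, so the orbit modules are still spanned by the subsets $\cOl_A$ and $\cOr_A$ of the basis $\V$, and Remark~\ref{basicLA} applies. The explicit description in Remark~\ref{HomSpace} makes the basis concrete. Each $[C]\in\cOrl_A$ can be written $[C]=\lambda x[A]=\mu[A]y$, and it yields the endomorphism of $\C\cOr_A$ given by left multiplication by $\lambda x$. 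Distinct $[C]$ give linearly independent endomorphisms, because these endomorphisms send $[A]$ to the distinct basis elements $[C]$ up to nonzero scalars. Symmetrically, right multiplication by $\mu y$ gives the endomorphism of $\C\cOl_A$ attached to $[C]$. The final line of Remark~\ref{HomSpace} already records that both endomorphism algebras have a basis indexed by $\cOrl_A$.

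Second, invoke Theorem~\ref{main1}, which gives $\cOrl_A=\{[A(\bbeta)]\mid\bbeta\in\cA\}=\fE^\fa$. Substituting this into the previous step shows that the elements of $\fE^\fa$, through the endomorphisms just described, form bases of $\End_{\C U}(\C\cOr_A)$ and of $\End_{\C U}(\C\cOl_A)$. As a consistency check, the dimension is then $|\fE^\fa|=q^{|\hH^\fa|}$.

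I expect no real obstacle, since all the work is done in Theorem~\ref{main1}. The only step needing attention is the monomial-versus-permutation subtlety in Remark~\ref{basicLA} noted above, which the nonzero-scalar argument settles.
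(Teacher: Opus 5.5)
Your proposal is correct and follows the paper's argument. The paper likewise specializes Remark~\ref{HomSpace} to $A=B$, which yields bases of both endomorphism algebras indexed by $\cOrl_A$, and then substitutes $\cOrl_A=\fE^\fa$ from Theorem~\ref{main1}; your added checks (that the orbit modules are still spanned by the orbit subsets under the monomial action, and linear independence via evaluation at $[A]$) only spell out what Remark~\ref{HomSpace} already asserts.
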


As an immediate consequence of \ref{HomSpace} and Theorem~\ref{main1},
we obtain necessary and sufficient conditions for the left orbit modules
$\C\cOl_B$, with $[B]\in\V$, to be irreducible. Since
$[B]\in\cObi_A$ for some normal template $[A]\in\V$ by
Corollary~\ref{normaltempl}, and $\C\cOl_B\cong\C\cOl_A$, we may
assume that $[B]=[A]$.

\begin{Cor}\label{main2} Let $[A]\in\V$ be a normal template and suppose $\p[A] = \{(i_1,j_1),\ldots,(i_k,j_k)\}$ such that $i_1>\ldots>i_k$. Then the following statements are equivalent:
\begin{enumerate}
\item[(1)] For all $1\leq\mu\leq k$ the column space of  $\hG_\mu$ is spanned by its normal columns.
\item[(2)] $\hH^\fa_\mu = \emptyset$ for all $1\leq\mu\leq k$. 
\item[(3)] $\cOrl_A = \{[A]\}$.
\item[(4)] $\dim_\C(\End_{\C U}(\C\cOr_A)) = \dim_\C(\End_{\C U}(\C\cOl_A)) = 1$.
\item[(5)] $\C\cOr_A$ and $\C\cOl_A$ are irreducible $\C U_J$-modules.\hfill$\square$
\end{enumerate}
\end{Cor}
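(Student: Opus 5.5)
We prove the equivalences in the cyclic order (1)$\Leftrightarrow$(2)$\Rightarrow$(3)$\Rightarrow$(2), together with (3)$\Leftrightarrow$(4)$\Leftrightarrow$(5). Essentially everything follows from Theorem~\ref{main1}, Remark~\ref{HomSpace} and Definition~\ref{row bases}.

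For (1)$\Leftrightarrow$(2), fix $\mu$ and recall the construction in Definition~\ref{row bases}. The basis $\tilde\cB_\mu=\cB^\n_\mu\cup\cB^\fri_\mu$ of the column space of $\G_\mu$ is extended to a basis $\hB_\mu$ of the column space of $\hG_\mu$. The added vectors $\hB^\fa_\mu$ are columns indexed by $\hC^\fa_\mu$, and $\hH^\fa_\mu$ is in bijection with $\hC^\fa_\mu$. The columns of $\G_\mu$ are exactly the normal columns of $\hG_\mu$. Hence $\hB^\fa_\mu=\emptyset$ holds if and only if the span of the normal columns equals the full column space of $\hG_\mu$. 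Since $|\hH^\fa_\mu|=|\hB^\fa_\mu|=\dim\operatorname{col}(\hG_\mu)-\dim\operatorname{col}(\G_\mu)$, this quantity is independent of the choice of bases. The empty-matrix case of Remark~\ref{nullmatrix hG}(4) is trivially consistent with this.

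For (2)$\Leftrightarrow$(3), Theorem~\ref{main1} gives $|\cOrl_A|=q^{\alpha}$ with $\alpha=|\hH^\fa|=\sum_\mu|\hH^\fa_\mu|$, the union being disjoint. Since $[A]\in\cOrl_A$ always, we get $\cOrl_A=\{[A]\}$ if and only if $\alpha=0$, which holds if and only if every $\hH^\fa_\mu$ is empty.

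For (3)$\Leftrightarrow$(4), Remark~\ref{HomSpace} (see also Corollary~\ref{endoring}) says that $\cOrl_A$ indexes a basis of both $\End_{\C U}(\C\cOr_A)$ and $\End_{\C U}(\C\cOl_A)$. Both dimensions therefore equal $|\cOrl_A|=q^\alpha$, and this is $1$ exactly when (3) holds.

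For (4)$\Leftrightarrow$(5), the group algebra $\C U$ is semisimple by Maschke's theorem, so a module $M$ is irreducible if and only if $\dim\End_{\C U}(M)=1$ (Schur's lemma together with complete reducibility). One point must be checked here: in the semisimple setting a module with a $1$-dimensional endomorphism ring is irreducible, since otherwise a decomposition $M=M_1\oplus M_2$ would supply projections giving $\dim\End\ge2$. With that, the equivalence follows, and it applies to the left and the right orbit module simultaneously. The only step requiring any care is the basis-independence in (1)$\Leftrightarrow$(2), and the dimension count above settles it.
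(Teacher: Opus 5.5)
Your proof is correct and takes the same route as the paper. The paper states the corollary without proof, as an immediate consequence of Remark~\ref{HomSpace} and Theorem~\ref{main1}. Your count $|\hH^\fa_\mu|=\dim\col(\hG_\mu)-\dim\col(\G_\mu)$ for (1)$\Leftrightarrow$(2), and your Maschke--Schur argument for (4)$\Leftrightarrow$(5), fill in details the paper leaves implicit.
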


\begin{Remark}\label{order hook inters}  Recall that we denoted the number of one-sided orbits in $\cObi_A$ by $\kappa$ in \ref{gen orbit facts}. We now set 
$\alpha = |\hH^\fa| = |\hH^\fa_1|+\ldots+|\hH^\fa_k|$  and  similarly $\iota = |\hH^\fri|$. Note that then $\iota = |\cN^\fri|$ as well by \ref{row bas prop} (extended to all main hooks by \ref{Hook inters}). Moreover we denote the number of $y$-conditions of $\p$ by  $\upsilon = |\bfy(\p)|$. Then we have in view of \ref{OrOl bij}
\begin{equation}\label{place count}
q^{|\lPl(\p)|} =|\cOl_A|=|\cOr_A|= q^{|\rPl(\p)|} = q^{\alpha+\iota+\upsilon} \text{ and } \kappa = \frac{|\cOr_A|}{|\cOrl_A|} = \frac{q^{\alpha+\iota+\upsilon}}{q^\alpha} = q^{\iota+\upsilon}  
\end{equation}
observing that $\rPl(\p) = \hH^\fa\cup\hH^\fri\cup\bfy$ (disjoint union) by  \ref{rplacebij} and applying \ref{main1}.
\hfill$\square$\end{Remark}

Recall from \ref{part choices} the definition of $\fE^\fri$.
Theorem~\ref{extspecial global} now implies:

\begin{Prop}\label{praemain3}
For every $\bbeta\in\F_q^\cH$ such that $\beta_{rs}\in\F_q$ is
chosen arbitrarily for $(r,s)\in\cN^\fri$ and
$\beta_{rs}=A_{rs}=0$ for
$(r,s)\in\cH^\n\cup\hH^\fa$, there exists a unique extraspecial
linear character $[A(\bbeta)]=[B]\in\cOl_A$ which vanishes at all
normal main hook intersections and satisfies:
\begin{enumerate}
\item[(i)] $B_{rs}=\beta_{rs}$ for
$(r,s)\in\cH$. In particular,
$B_{rs}=\beta_{rs}=0$ for
$(r,s)\in\cH^\n\cup\hH^\fa$.

\item[(ii)] The entries $B_{rs}$ at positions
$(r,s)\in\bN\cup\hH^\fri$ are uniquely determined by
Equation~\ref{entries row a}.

\item[(iii)] $B_{rs}=A_{rs}$ for all other positions $(r,s)\in J$.
\end{enumerate}
The set $\fE^\fri$ is precisely the set of extraspecial linear characters
$[B]\in\cOl_A$ obtained in this way. It contains $q^\iota$ distinct
extraspecial linear characters with pairwise different normal kernels.
\hfill$\square$
\end{Prop}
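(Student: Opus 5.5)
The plan is to specialize Theorem~\ref{extspecial global} to the vectors described in the statement, with $\cW=\cN^\fri$ as in Notation~\ref{part choices}. First I would check that the prescribed values are consistent with $[A]$. Since $[A]$ is a normal template, it vanishes at every main hook intersection, so $A_{rs}=0$ for $(r,s)\in\cH^\n\cup\hH^\fa$. Hence every $\bbeta$ in the statement satisfies $\beta_{rs}=A_{rs}$ on $\cH\setminus\cN^\fri$, and the set of such $\bbeta$ is exactly the index set defining $\fE^\fri$.

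Theorem~\ref{extspecial global} then gives a unique extraspecial $[B]=[A(\bbeta)]\in\cOl_A$ for each such $\bbeta$, and properties (i)--(iii) are read off directly. Part (i) of that theorem gives $B_{rs}=\beta_{rs}$ on $\cH$; in particular $B$ vanishes on $\cH^\n$, which is the full set of normal main hook intersections by Definition~\ref{Hook inters}. Part (ii) gives the entries on $\bN\cup\hH^\fri$ via Equation~\ref{entries row a}. Part (iii) gives $B_{rs}=A_{rs}$ elsewhere. Uniqueness, and the identification of $\fE^\fri$ with exactly this set, follow from the uniqueness clause of Theorem~\ref{extspecial global}.

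The counting claim is that there are $q^{|\cN^\fri|}=q^\iota$ such characters, using $|\cN^\fri|=\iota$ from Remark~\ref{order hook inters}. The characters are distinct since $\bbeta\mapsto[A(\bbeta)]$ is injective. For pairwise different normal kernels, note that $\cN^\fri$ consists of normal positions, namely normal supplementary conditions. Two different choices of $\bbeta$ differ at some $(r,s)\in\cN^\fri$, and $B_{rs}=\beta_{rs}$ there, so the normal kernels differ at that normal column.

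The only point requiring care is that the entries on $\bN$, computed by Equation~\ref{entries row a}, are also normal and depend on $\bbeta$. They could therefore in principle coincide for different $\bbeta$, but this does not affect the argument, because the kernels already differ on $\cN^\fri$. I expect no genuine obstacle: the statement is a bookkeeping specialization of Theorem~\ref{extspecial global}, and the normal-kernel distinctness is immediate from (i).
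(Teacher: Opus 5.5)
Your proposal is correct and follows the paper's route. The paper states this proposition as a direct consequence of Theorem~\ref{extspecial global} specialized to $\cW=\cN^\fri$, using that a normal template vanishes at all main hook intersections, which is exactly your argument. Your explicit justification of the pairwise different normal kernels (the positions in $\cN^\fri$ lie in normal columns and carry the freely chosen values $\beta_{rs}$) spells out a step the paper leaves implicit.
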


We now set
$\cH^\bfy=\cH\cup\bfy
=\cH^\n\cup\cN^\fri\cup\hH^\fa\cup\bfy(\p)$.
Applying Theorem~\ref{extspecial global} and
Proposition~\ref{normalenough}, we make suitable choices for the
scalars $\beta_{rs}\in\F_q$, for $(r,s)\in\cH^\bfy$, and formulate
our main result on the left orbits contained in $\cObi_A$.
Essentially, this result was already proved in [\cite{DG3}, 6.22],
after applying the mirror map \ref{mirror}. Here, however, we include
a much simplified proof. We retain the notation and definitions
introduced in \ref{row bases} and recall the notation introduced in
\ref{Q sets}.

Consider the right orbits $\cOr_B$ for $[B]\in\fE^\fri$. Since $[B]$
is special, each $\cOr_B$ is a special right orbit. Moreover, all its
members vanish at the normal main hook intersections, since $[B]$
does and normal kernels are constant on right orbits by
\ref{normalfixedfromright}. Recall from \ref{extraspecial exists} that every special right orbit
$\cOr_B$ contains a unique extraspecial linear character $[E^B]$
which, in addition, vanishes at all nonnormal main hook intersections
$\hH^\fri\cup\hH^\fa$. Thus $[E^B]$ vanishes at all main hook
intersections and hence is a normal template.

Recall the notation introduced in \ref{Q sets}. In particular,
$\fT_A$ denotes the set of templates in $\cObi_A$. Define
\begin{equation}\label{Iso scalars}
\cI
=
\{\bbeta=(\beta_{rs})\in\F_q^{\cH^\bfy}
\mid \beta_{rs}=0\text{ for all }
(r,s)\in\cH^\n\cup\hH^\fa\}.
\end{equation}
We now obtain the following decomposition of $\cObi_A$ into left
orbits:

\begin{Theorem}\label{main3}
Let $[A]\in\V$ be a normal template with main condition set
$\p=\p[A]$. Then, for every $\bbeta=(\beta_{rs})\in\cI$, there exists
a unique template $[T]=[T(\bbeta)]\in\fT_A$ such that
$T_{rs}=\beta_{rs}$ for all $(r,s)\in\cH^\bfy$.

More precisely, $\cObi_A$ decomposes into the disjoint union of left
orbits
\[
\cObi_A
=
\bigcup_{[T]\in\fT_A}\cOl_T
=
\bigcup_{\bbeta\in\cI}\cOl_{T(\bbeta)}.
\]
Thus $\cI$ parametrizes the left orbits contained in $\cObi_A$, and
$
|\cI|=q^{\iota+\upsilon}=\kappa.
$
\end{Theorem}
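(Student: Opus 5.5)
The plan is to build the templates $[T(\bbeta)]$ in two stages. We first produce the normal templates of $\cObi_A$ from the extraspecial characters in $\fE^\fri$. We then fill in the $y$-conditions by Proposition~\ref{normalenough}.

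Write $\bbeta\in\cI$ as $\bbeta=(\bbeta',\bbeta'')$, with $\bbeta'\in\F_q^{\cN^\fri}$ and $\bbeta''\in\F_q^{\bfy}$. The remaining coordinates on $\cH^\n\cup\hH^\fa$ are zero.

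\textbf{Step 1.} Apply Proposition~\ref{praemain3} to $\bbeta'$. This gives $[B]\in\fE^\fri\subseteq\cOl_A$. It is extraspecial, vanishes at all normal main hook intersections, and satisfies $B_{rs}=\beta_{rs}$ on $\cN^\fri$. By Theorem~\ref{nker suff}, $[N]=\cT_A([B])=[E^B]\in\cOr_B$ is a normal template in $\cObi_A$. Moreover $\nker[N]=\nker[B]$ by Remark~\ref{normalfixedfromright}.

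Step 2 is the key point, and it is where I expect the main difficulty. I want to show that $[B]\mapsto\cT_A([B])$ is a bijection from $\fE^\fri$ onto $\fTn_A$.

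\emph{Injectivity.} Since $\nker[\cT_A(B)]=\nker[B]$ and the $q^\iota$ elements of $\fE^\fri$ have pairwise different normal kernels (Proposition~\ref{praemain3}), the map is injective.

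\emph{Surjectivity.} Take any normal template $[N]\in\cObi_A$. Theorem~\ref{nker suff} gives an extraspecial $[C]\in\cOl_A$ vanishing on $\cH^\n$ with $\cT_A([C])=[N]$, so $\nker[C]=\nker[N]$. By Theorem~\ref{extspecial global}, $[C]=[A(\bbeta_C)]$ with $\bbeta_C$ zero on $\cH^\n$. Let $[B]\in\fE^\fri$ be the character with the same $\cN^\fri$-entries as $[C]$.

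It remains to show $\cT_A([B])=\cT_A([C])$. I would argue that $[C]$ and $[B]$ differ only through the choice of the $\hH^\fa$-entries. Changing those entries amounts to passing through $\cOrl$: by Theorem~\ref{main1} applied to the normal template $[N]$ (or via the right action $[C]\mapsto[C].y$), varying the $\hH^\fa$ coordinates stays inside one right orbit. The way to see this is through Equation~\ref{entries row a}. With the $\cH^\n\cup\cN^\fri$ entries fixed, the normal entries on $\bN$ do not depend on the $\hH^\fa$ choices, because $\gamma_{st}=0$ for $s\in\bC$, $t\in\hC^\fa$. Hence $\nker[B]=\nker[C]$.

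Both $[B]$ and $[C]$ are special with the same main conditions (Proposition~\ref{mainpreserved}) and the same normal kernel. Corollary~\ref{extraspecial exists} and the definition of $[C^\fe]$ in Notation~\ref{Q sets} then give $[E^B]=[E^C]$, since the extraspecial element of a special right orbit is determined by the main entries and the normal kernel. So $\cT_A([B])=[N]$, which proves surjectivity.

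\textbf{Step 3.} By Proposition~\ref{normalenough}, for the normal template $[N]=\cT_A([B])$ and each choice $\bbeta''\in\F_q^\bfy$, there is a unique template $[T(\bbeta)]\in\fT(\cOr_N)$ with $y$-entries $\bbeta''$. It agrees with $[N]$ elsewhere, and in particular on $\cN^\fri$ (normal positions) and on $\cH^\n\cup\hH^\fa$ (zero, as $[N]$ is a template). So $T_{rs}=\beta_{rs}$ on all of $\cH^\bfy$. Uniqueness follows from the disjointness of the union~\ref{templates fT} together with injectivity in Step 2, because a template determines its normal kernel and hence its $\cN^\fri$-entries.

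Since $\fT_A$ is that disjoint union, every template in $\cObi_A$ is obtained. By Theorem~\ref{unique one} the left orbits in $\cObi_A$ correspond bijectively to $\fT_A$, which yields the stated decomposition. Finally, $|\cI|=q^{|\cN^\fri|+|\bfy|}=q^{\iota+\upsilon}=\kappa$ by Remark~\ref{order hook inters}.
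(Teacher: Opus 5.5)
Your proof is correct, but it shows that the constructed templates exhaust $\fT_A$ by a different route from the paper.

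\textbf{The paper's route.} The paper shows that $\{[E^B]\mid [B]\in\fE^\fri\}$ consists of $q^\iota$ distinct normal templates. It then fills in the $y$-conditions via Proposition~\ref{normalenough}, obtaining $q^{\iota+\upsilon}$ distinct templates in $\cObi_A$. Finally it concludes that these are all the templates, because the number of left orbits in $\cObi_A$ is already known to be $\kappa=|\cOr_A|/|\cOrl_A|=q^{\iota+\upsilon}$. That count comes from Facts~\ref{gen orbit facts}, Theorem~\ref{main1} and Remark~\ref{order hook inters}, and surjectivity onto $\fTn_A$ then follows for free.

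\textbf{Your route.} You prove surjectivity of $\fE^\fri\to\fTn_A$ directly, by computing the fibres of $\cT_A$. Two facts do the work:
\begin{enumerate}
\item[(1)] The normal kernel of $[A(\bbeta)]$ depends only on the coordinates of $\bbeta$ on $\cH^\n\cup\cN^\fri$, because $\gamma_{st}=0$ for $s\in\bC$, $t\in\hC^\fa$.
\item[(2)] By Proposition~\ref{rplacebij}, $[E^C]$ is determined by $\main[C]$ and $\nker[C]$.
\end{enumerate}
This is exactly the fibre analysis the paper announces at the start of the section and then sidesteps by counting.

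\textbf{Comparison.} Your route does not use $|\cOrl_A|=q^\alpha$ or the orbit-counting formula to get the decomposition. It produces $|\fT_A|=|\cI|=q^{\iota+\upsilon}$, and hence the number $\kappa$ of left orbits, as a consequence rather than an input. You could therefore derive the final equality $|\cI|=\kappa$ instead of citing Remark~\ref{order hook inters}. The paper's route is shorter because it reuses counts already established.

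\textbf{Minor point.} Your parenthetical appeal to Theorem~\ref{main1} ``applied to the normal template $[N]$'' is misdirected. The sets $\hH^\fa$ are defined relative to $[A]$, and $[B],[C]$ lie in $\cOl_A$, which is not $\cOl_N$ unless $[N]=[A]$. It is also unnecessary: the normal-kernel computation together with Proposition~\ref{rplacebij} already gives $[E^B]=[E^C]$, and even $[C]\in\cOr_B$.
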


\begin{proof}
By Proposition~\ref{praemain3}, the set $\fE^\fri$ consists of $q^\iota$
distinct extraspecial linear characters with pairwise distinct normal
kernels, all of which vanish at the normal main hook intersections.
Each right orbit $\cOr_B$ with
$[B]\in\fE^\fri$ contains by \ref{extraspecial exists} a unique normal template $[E^B]$. Hence
\begin{equation}\label{normal all}
\{[E^B]\mid [B]\in\fE^\fri\}
=
\{[T(\bbeta)]\mid
\bbeta\in\cI,\ 
\beta_{rs}=0\text{ for all }(r,s)\in\bfy\}
\subseteq\fTn_A.
\end{equation}

By Proposition~\ref{normalenough}, all templates in $\cObi_A$ are
obtained by filling the $y$-conditions of the normal templates in
$\fTn_A$ with arbitrary entries from $\F_q$. Hence we obtain
$q^\iota q^\upsilon=q^{\iota+\upsilon}=\kappa$ distinct templates in
$\cObi_A$. By \ref{gen orbit facts}, this is precisely the number of
left orbits contained in $\cObi_A$. Therefore these templates form a
complete set of representatives for the left orbits, we have equality in \ref{normal all}, and the asserted
decomposition of $\cObi_A$ into left orbits follows.
\end{proof}

\section{Decomposing $\cObi_A$ into Right Orbits}

In \ref{normalenough} we proved that inserting arbitrary entries at the
$y$-conditions of a normal template in $\cObi_A$ produces all templates
and hence all left orbits in $\cObi_A$. The key ingredient is the
existence of a unique template in each left orbit, as stated in
\ref{unique one}. As mentioned earlier, for right orbits we lack a
corresponding distinguished object. Nevertheless, using a different
approach, we shall show that a similar construction works for right
orbits as well. We first need the following:

\begin{Defn}\label{w positions}
Define $\bfw=\bfw(\p)$ to be the set of positions in $J$ on the main
hook legs that are not main hook intersections. The elements of $\bfw$
are called {\bf $w$-conditions}.
\hfill$\square$
\end{Defn}

Note that normal main hook intersections are $\ell$-places as well and
have to be treated similarly to $w$-conditions. The reason is that the
entries at normal main hook intersections can be changed by restricted
row operations, but not by restricted column operations. Thus distinct
entries at these positions keep right orbits apart and hence behave,
with respect to the right action of $U$ on $\V$, like normal
supplementary conditions. We have:

\begin{Lemma}\label{w and y}
Let $\p=\p[A]$. Then
$|\bfw(\p)|=|\bfy(\p)|-|\cH^\n|$.
\end{Lemma}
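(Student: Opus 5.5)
The plan is to compare two descriptions of the common orbit size $|\cOl_A|=|\cOr_A|$: one counts $\ell$-places, the other counts $r$-places. By Remark~\ref{OrOl bij}, the flip maps give a bijection $\f:\lPl(\p)\to\rPl(\p)$, so $|\lPl(\p)|=|\rPl(\p)|$. By Remark~\ref{order hook inters}, $\rPl(\p)$ is the disjoint union $\hH^\fa\cup\hH^\fri\cup\bfy$, hence
\[
|\rPl(\p)|=\alpha+\iota+\upsilon .
\]
It therefore suffices to prove
\[
|\lPl(\p)|=|\bfw(\p)|+|\cH^\n|+\alpha+\iota ,
\]
after which subtraction gives $|\bfw|=\upsilon-|\cH^\n|$.

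For the decomposition of $\lPl(\p)$, note first that it is the union of the main hook legs $h_{ij}^l$, $(i,j)\in\p$. These legs are pairwise disjoint because main conditions lie in distinct columns, and each leg lies in the column of its main condition. By Definition~\ref{w positions}, $\lPl(\p)$ is the disjoint union of $\bfw(\p)$ and the set of main hook intersections lying on main hook legs. Every main hook intersection is of the form $h^a_{ab}\cap h^l_{ef}$ with $(a,b),(e,f)\in\p$, so it lies on a leg and in $J$. Consequently the second set is exactly the set of all main hook intersections, and each of them is counted once.

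Next I identify the set of all main hook intersections with $\hH^\fh=\cH^\n\cup\hH^\fa\cup\hH^\fri$, taken over all main conditions as in Definition~\ref{Hook inters}. Fix $(a,b)\in\p$ and an intersection $(a,f)$ of its arm with the leg of $(e,f)\in\p$. Then $e<a<f<b$, so $e\in\cR$. Moreover $A_{ef}\neq0$, so $f\in\hC$ and $\fc_f(\hG)$ contains a main condition. By Remark~\ref{row bas prop}, these columns are precisely those indexed by $\cC^\n\cup\hC^\fa\cup\hC^\fri$, and the corresponding positions on row $a$ are $\hH^\fh_\mu=\cH^\n_\mu\cup\hH^\fa_\mu\cup\hH^\fri_\mu$, a disjoint union. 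Taking the union over $\mu$ (rows $i_\mu$ are distinct, so there is no overlap) gives $|\hH^\fh|=|\cH^\n|+\alpha+\iota$, which is the required count.

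The step needing the most care is this identification of all main hook intersections with $\hH^\fh$, in particular the converse inclusion and the disjointness of $\cH^\n,\hH^\fa,\hH^\fri$. The inclusion $\hH^\fh\subseteq\{\text{main hook intersections}\}$ holds by construction in Definition~\ref{row bases}. I expect the main check to be the converse argument above, namely that the row $e$ of the main condition never falls outside $\cR$. This follows directly from $e<a<f<b$ together with the definition of $\cR$.
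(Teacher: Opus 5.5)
Your proposal is correct and follows essentially the paper's proof. Both arguments use $|\lPl(\p)|=|\rPl(\p)|$ from Remark~\ref{OrOl bij} together with the decompositions $\bfy=\rPl\setminus(\hH^\fri\cup\hH^\fa)$ and $\bfw=\lPl\setminus(\hH^\fri\cup\hH^\fa\cup\cH^\n)$, and then subtract; your check that the main hook intersections are exactly $\cH^\n\cup\hH^\fa\cup\hH^\fri$, each on exactly one main hook leg, just makes explicit what the paper takes for granted.
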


\begin{proof}
By part iii) of \ref{template}, the $y$-conditions are the positions
in $J$ on the reduced main hook arms with respect to $\p$ that are not
main hook intersections. Thus, by \ref{rplaces},
$
\bfy
=
\rPl\setminus(\hH^\fri\cup\hH^\fa).
$
Similarly, \ref{w positions} and \ref{lplaces} imply
$
\bfw
=
\lPl\setminus(\hH^\fri\cup\hH^\fa\cup\cH^\n).
$
Since $|\rPl|=|\lPl|$ by \ref{OrOl bij}, it follows that
$|\bfw|=|\bfy|-|\cH^\n|$, proving the lemma.
\end{proof}

As we shall see, inserting different entries at the $w$-conditions of
extraspecial linear characters in $\cOr_A$ produces linear characters
in $\cOr_A$ that generate isomorphic but pairwise distinct left orbits.
As in our analysis of the left orbits in $\cObi_A$, extraspecial linear
characters in $\cOl_A$ play a central role in our investigation of the
right orbits. Here, however, the extraspecial linear characters
$[B]\in\cOl_A$ need not vanish at the normal main hook intersections
in $\cH^\n$. Allowing nonzero entries at these positions compensates
precisely for the difference between the cardinalities of $\bfy$ and
$\bfw$.

The following lemma is an immediate consequence of \ref{HomSpace},
Theorem~\ref{main1}, and the fact that isomorphic modules have
isomorphic endomorphism rings:

\begin{Lemma}\label{Auts are iso} Let $[B]\in\cObi_A$. Then $|\cOrl_B|=|\cOrl_A| = q^\alpha$. 
\hfill$\square$\end{Lemma}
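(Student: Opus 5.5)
The plan is to transport the computation from $[A]$ to $[B]$ through module isomorphisms. By \ref{HomSpace}, the set $\cOrl_B=\cOr_B\cap\cOl_B$ indexes a $\C$-basis of $\End_{\C U}(\C\cOr_B)$. Hence
\[
|\cOrl_B|=\dim_\C\End_{\C U}(\C\cOr_B),
\]
and the same identity holds for $A$ in place of $B$.

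First, since $[B]\in\cObi_A$, we have $\cObi_B=\cObi_A$. By Corollary~\ref{iso orbits}, this gives $\C\cOr_B\cong\C\cOr_A$ as right $\C U$-modules; explicitly, writing $[B]=u.[A].v$, left multiplication by $u$ (up to the monomial scalar) yields the isomorphism, as in the proof of \ref{iso orbits}.

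Second, isomorphic modules have isomorphic endomorphism algebras. Therefore
\[
\dim_\C\End_{\C U}(\C\cOr_B)=\dim_\C\End_{\C U}(\C\cOr_A),
\]
and consequently $|\cOrl_B|=|\cOrl_A|$.

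Finally, Theorem~\ref{main1} gives $|\cOrl_A|=q^\alpha$ with $\alpha=|\hH^\fa|$. Here we use the standing assumption that $[A]$ is a normal template, which is harmless: by Corollary~\ref{normaltempl}, every biorbit contains a normal template.

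No step is a genuine obstacle. The only point requiring care is that \ref{HomSpace} identifies $\End_{\C U}(\C\cOr_B)$ with $\C(\cOl_B\cap\cOr_B)$ as a vector space with that basis, so that the dimension really is the cardinality $|\cOrl_B|$. This holds because $\C\V$ has $\V$ as a monomial basis, so the orbit submodules are spanned by their (linearly independent) orbit elements, and Remark~\ref{basicLA} then applies to the intersection.
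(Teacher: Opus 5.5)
Your proposal is correct and follows the paper's own justification: $\cObi_B=\cObi_A$ gives $\C\cOr_B\cong\C\cOr_A$ by Corollary~\ref{iso orbits}, so the endomorphism algebras are isomorphic. Remark~\ref{HomSpace} then identifies $\dim_\C\End_{\C U}(\C\cOr_B)$ with $|\cOrl_B|$, and Theorem~\ref{main1} yields $|\cOrl_A|=q^\alpha$.
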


 Recall the definition of $\cA$ in \ref{auto vectors} and that
$
\cObi_A=\bigcup_{[B]\in\cOl_A}\cOr_B
$
by \ref{biorbitdec}. Thus it suffices to consider the right orbits
$\cOr_B$ with $[B]\in\cOl_A$.

\begin{Lemma}\label{left right intersection I}
Let $[B]=b.[A]\in\cOl_A$ with $b\in U$. Then, for every $u\in U$,
\[
[A].u\in\cOrl_A
\quad\text{if and only if}\quad
[B].u\in\cOrl_B.
\]
Consequently,
\[
b.(\cOrl_A)
=
\{b.[A(\bbeta)]\mid\bbeta\in\cA\}
=
\cOrl_B.
\]
\end{Lemma}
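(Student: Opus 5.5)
The plan is to use only that the left and right actions commute, i.e.\ $(x.[C]).y=x.([C].y)$ for all $x,y\in U$ and $[C]\in\V$. This is the biaction property from Lemma~\ref{dotstar}.

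First, the equivalence. Write $\cOrl_A=\cOr_A\cap\cOl_A$. For every $u\in U$ we have $[A].u\in\cOr_A$ and $[B].u\in\cOr_B$ automatically, so only membership in the left orbits needs checking. Since $[B]=b.[A]$, the left orbits coincide: $\cOl_B=U.[B]=U.b.[A]=\cOl_A$.

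Suppose $[A].u\in\cOl_A$, say $[A].u=x.[A]$ with $x\in U$. Then
\[
[B].u=(b.[A]).u=b.([A].u)=b.(x.[A])=(bx).[A]\in\cOl_A=\cOl_B ,
\]
so $[B].u\in\cOrl_B$. Conversely, suppose $[B].u\in\cOl_B=\cOl_A$. Then
\[
[A].u=b^{-1}.([B].u)\in\cOl_A ,
\]
so $[A].u\in\cOrl_A$. This proves the equivalence.

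Next, the set equality $b.(\cOrl_A)=\cOrl_B$. Every element of $\cOrl_A$ has the form $[A].u$ with $[A].u\in\cOl_A$. Its image under $b$ is $b.([A].u)=[B].u$, which lies in $\cOrl_B$ by the equivalence. Conversely, every element of $\cOrl_B$ has the form $[B].u$ with $u$ satisfying the condition. By the equivalence, $[A].u\in\cOrl_A$, and $[B].u=b.([A].u)$.

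Hence the map $[C]\mapsto b.[C]$ sends $\cOrl_A$ onto $\cOrl_B$, and it is injective because it is a group action. Theorem~\ref{main1} gives $\cOrl_A=\{[A(\bbeta)]\mid\bbeta\in\cA\}$, which yields the displayed description $\cOrl_B=\{b.[A(\bbeta)]\mid\bbeta\in\cA\}$. As a consistency check, this also recovers $|\cOrl_B|=q^\alpha$ from Lemma~\ref{Auts are iso}.

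I expect no real obstacle here. The only point needing care is to use the permutation (dot) action throughout rather than the monomial action, so that no scalar factors appear, and to invoke that left and right actions commute.
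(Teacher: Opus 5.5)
Your proof is correct and follows the same route as the paper. Both rely on the commutation of the left and right dot actions, prove the equivalence in both directions, derive $b.(\cOrl_A)=\cOrl_B$ from it, and then invoke Theorem~\ref{main1}. The one cosmetic difference is that you use $\cOl_B=\cOl_A$ directly, whereas the paper rewrites $(bc).[A]$ as $(bcb^{-1}).[B]$ to show membership in $\cOl_B$.
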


\begin{proof}
Let $u\in U$ with $[A].u\in\cOrl_A$. Then
$[A].u=c.[A]$ for some $c\in U$. Hence
\[
[B].u
=
b.[A].u
=
(bc).[A]
=
(bcb^{-1}).b.[A]
=
(bcb^{-1}).[B]\in\cOl_B.
\]
Since $[B].u\in\cOr_B$, it follows that
$[B].u\in\cOrl_B$.

Conversely, let $u\in U$ be such that
$[B].u=d.[B]\in\cOrl_B$ for some $d\in U$. Then
\[
b.[A].u
=
[B].u
=
d.[B]
=
(db).[A],
\]
and hence
\[
[A].u=(b^{-1}db).[A]\in\cOl_A.
\]
Since $[A].u\in\cOr_A$, we obtain
$[A].u\in\cOrl_A$. This proves the first assertion.

Now let $[C]\in\cOrl_B$. Then
$[C]=[B].u=b.[A].u$ for some $u\in U$. By the first assertion,
$[A].u\in\cOrl_A$, and therefore
$[C]\in b.(\cOrl_A)$. Thus
\[
\cOrl_B\subseteq b.(\cOrl_A).
\]
Conversely, if $[C]\in b.(\cOrl_A)\subseteq\cOr_A$, then
$[C]=b.[A].u=[B].u$ for some $u\in U$ with
$[A].u\in\cOrl_A$. Again by the first assertion,
$[B].u\in\cOrl_B$, and hence
$
b.(\cOrl_A)\subseteq\cOrl_B.
$
Therefore
$
b.(\cOrl_A)=\cOrl_B.
$
Finally, Theorem~\ref{main1} gives
$
\cOrl_A
=
\{[A(\bbeta)]\mid\bbeta\in\cA\},
$
and consequently
\[
\cOrl_B
=
b.(\cOrl_A)
=
\{b.[A(\bbeta)]\mid\bbeta\in\cA\},
\]
as desired.
\end{proof}

Note that the column group $X_{\col(i)}$ for a normal column $i$ acts
trivially on $\V$ by the left dot action. Therefore, we set $u_i=1$
without further notice whenever row $i$ is not contained in $J$. Note in particular, $u_n=1$.
Moreover, \ref{startfacts} implies that every $u\in U=U_J$ can be
written uniquely as
$
u=u_2u_3\cdots u_{n-1},
$
where, for $2\leq i\leq n-1$, $u_i$ is a uniquely determined element
of the abelian column subgroup $X_{\col(i)}$ (see
\ref{column group}), and $u_i=1$ if row $i$ is not contained in $J$.
Each $u_i$ can in turn be written uniquely as
\[
u_i=\prod_{d=1}^{i-1}x_{di}(\zeta_{di})\in X_{\col(i)},
\]
where $\zeta_{di}\in\F_q$ is uniquely determined for $(d,i)\in J$
and $\zeta_{di}=0$ otherwise.

We now proceed similarly to Lemma~\ref{key lin alg}. The left dot
action of $u_i$ on any $[C]\in\V$ can be illustrated as follows,
where main conditions are indicated by $\circ$:

\begin{equation}\label{Deltamatrix}
\begin{picture}(120,120)
\put(130,120){\line(0,-1){120}}
\put(130,120){\line(-1,0){120}}
\put(10,120){\line(1,-1){120}}

\put(90,105){$\circ$}
\put(91,108){\line(-1,0){69}}
\put(93,106){\line(0,-1){69}}
\put(22,108){\circle*{3}}
\put(13,106){\makebox{$b$}}

\dottedline[\tiny x]{9}(84,67)(120,67)
\dottedline[\tiny x]{9}(76,67)(76,117)
\dottedline[\tiny x]{9}(84,117)(127,117)
\dottedline[\tiny x]{9}(127,117)(127,67)

\put(142,120){\makebox{$\Delta_i[C]$}}
\put(140,122){\vector(-1,-1){25}}

\put(93,37){\circle*{3}} 
\put(84,34){\makebox{$c$}}

\put(69,61){\circle*{3}} 
\put(60,57){\makebox{$i$}}

\dottedline{.2}(100,30)(130,30)
\multiput(100,25)(7.8,0){4}{\line(1,2){5}}

\put(118,58){$\circ$}
\put(121,59){\line(0,-1){49}}
\put(68,61){\line(1,0){51}}

\put(121,9){\circle*{3}} 
\put(113,3){$j$}

\put(114,16){\circle*{3}} 
\put(106,12){$e$}

\put(111,85){$\circ$}
\put(112,88){\line(-1,0){69}}
\put(114,86){\line(0,-1){69}}
\put(42,88){\circle*{3}}
\put(32,83){\makebox{$d$}}

\end{picture}
\end{equation}

Define, for $2\leq i\leq n-1$,
$
\Delta_i[C]=\Delta_i
=
C_{\{1,\ldots,i-1\}\times\{i+1,\ldots,n\}},
$
the matrix obtained from $C$ by deleting rows $i,\ldots,n$ and
columns $1,\ldots,i$. 

The left
dot action of $u_i\in X_{\col(i)}$ on $[C]$ amounts to subtracting from row $i$ the
linear combination of the rows of $\Delta_i$ with coefficients
$\zeta_{di}\in\F_q$, for $d=1,\ldots,i-1$. Thus,  we have
\begin{equation}\label{use Delta}
\fr_i(u_i.[C])
=
\fr_i(C)-\sum_{d=1}^{i-1}\zeta_{di}\fr_d(\Delta_i),
\end{equation}
where the rows of $\Delta_i$ are understood to be extended by zeros.

Suppose now that $[C]$ is extraspecial and that $(b,c),(i,j)\in\p[C]$. 
Consider the nonnormal main hook
intersection $(i,c)\in J$. If $C_{dc}\neq0$ for some $1\leq d<i$, then
$b<d$, and $(d,c)\in h_{bc}^l$ is a nonnormal main hook intersection
strictly between the main condition $(b,c)$ and the main hook
intersection $(i,c)$.

Let $\hH(i,c)$ denote the set of main hook intersections on  $h_{bc}^l$ strictly to the north of $(i,c)$. Then
\begin{equation}\label{use Delta2}
D_{ic}
=
C_{ic}-\sum_{d=1}^{i-1}\zeta_{di}C_{dc}=
C_{ic}
-
\left(
\zeta_{bi}C_{bc}
+
\sum_{(d,c)\in\hH(i,c)}
\zeta_{di}C_{dc}
\right).
\end{equation}

Observe that 
\begin{equation}\label{use Delta3}
\Delta_t[C] = \Delta_t[D] \text{ for all } 2\leq t\leq i,
\end{equation}
since $[C]$ and $[D]$ differ only on row $i$. 
\hfill$\square$

Lemma~\ref{left right intersection I} provides the tools to prove the following proposition:

\begin{Prop}\label{premain 4}
Let $[B]\in\cOl_A$. Then, for any choice of $\xi_{ij}\in\F_q$ for
$(i,j)\in\hH^\fa$, there exists a unique $[C]\in\cOrl_B$ such that
$\nker[C]=\nker[B]$ and $C_{ij}=\xi_{ij}$ for all
$(i,j)\in\hH^\fa$. Moreover, every linear character in $\cOrl_B$
arises in this way.
\end{Prop}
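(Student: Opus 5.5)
By Remark~\ref{normalfixedfromright}, normal kernels are constant on right orbits. Hence every $[C]\in\cOrl_B\subseteq\cOr_B$ automatically satisfies $\nker[C]=\nker[B]$, and that condition in the statement costs nothing. By Lemma~\ref{left right intersection I} and Theorem~\ref{main1}, writing $[B]=b.[A]$, we have
\[
\cOrl_B=\{b.[A(\bbeta)]\mid\bbeta\in\cA\},
\qquad |\cOrl_B|=|\cOrl_A|=q^\alpha=|\F_q^{\hH^\fa}|,
\]
the latter also by Lemma~\ref{Auts are iso}. Since $\bbeta\in\cA$ is determined by its restriction to $\hH^\fa$, the claim is that the map
\[
\Psi:\F_q^{\hH^\fa}\longrightarrow\F_q^{\hH^\fa},\qquad \bbeta|_{\hH^\fa}\longmapsto\bigl((b.[A(\bbeta)])_{ij}\bigr)_{(i,j)\in\hH^\fa}
\]
is a bijection. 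Because domain and codomain have the same finite cardinality, it suffices to show that $\Psi$ is injective. The plan is to prove that $\Psi$ is affine with a unitriangular linear part.

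First, I will write $b=u_2u_3\cdots u_{n-1}$ with $u_i\in X_{\col(i)}$, as in the discussion before \ref{Deltamatrix}. The factor $u_{n-1}$ acts first, then $u_{n-2}$, and so on. By \eqref{use Delta}, $u_i$ alters only row $i$, and it does so using the rows of $\Delta_i$, which lie strictly above row $i$. These rows have not yet been modified when $u_i$ acts, because only the rows below $i$ have been changed so far (this is the content of \eqref{use Delta3}). Hence, setting $D=b.[A(\bbeta)]$, for each $(i,c)\in J$ we get
\[
D_{ic}=A(\bbeta)_{ic}-\sum_{d<i}\zeta_{di}\,A(\bbeta)_{dc},
\]
with the coefficients $\zeta_{di}$ depending only on $b$, not on $\bbeta$.

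Second, take $(i,c)\in\hH^\fa$ and use \eqref{use Delta2}. The only possibly nonzero entries of the extraspecial character $[A(\bbeta)]$ in column $c$ above row $i$ are the main condition $A_{bc}$, which is independent of $\bbeta$, and entries at main hook intersections $(d,c)$ with $d<i$. By Theorem~\ref{main1} and Equation~\ref{entries row a}, an entry of $[A(\bbeta)]$ at a position in row $d$ of $\hH^\fa\cup\hH^\fri$ equals $A_{dc}$ plus a linear form in the $\beta_{dt}$ with $(d,t)\in\hH^\fa$, that is, a form involving only the variables of row $d$. Therefore
\[
D_{ic}=\beta_{ic}+\bigl(\text{linear form in the }\beta_{dt},\ (d,t)\in\hH^\fa,\ d<i\bigr)+\text{const}.
\]
Ordering $\hH^\fa$ by rows from top to bottom, $\Psi$ is a translate of a unitriangular linear map. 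It is therefore injective, hence bijective, which gives both existence and uniqueness of $[C]$. The final assertion, that every element of $\cOrl_B$ arises in this way, holds because $\cOrl_B$ is exactly the image set $\{b.[A(\bbeta)]\}$.

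The main obstacle is the second step. One must check carefully that the entries feeding into $D_{ic}$, namely those at $\hH^\fri$ positions and at the main hook intersections above $(i,c)$ on $h_{bc}^l$, depend only on $\beta$'s from strictly higher rows and never on $\beta_{ic}$ itself or on variables from row $i$ or lower. I expect this to follow from the row-by-row structure of Equation~\ref{entries row a}: each row $d$ of $[A(\bbeta)]$ is governed by its own block of variables $\beta_{d\,\cdot}$. If some subtlety arises, for instance a coefficient $\zeta_{di}$ coupling back to row $i$, I would fall back on proving surjectivity directly. That would mean prescribing the $\hH^\fa$-entries of $D$ row by row from top to bottom, each time solving for $\beta_{i\cdot}$ once the higher rows are fixed.
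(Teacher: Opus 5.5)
Your proposal is correct and follows essentially the same route as the paper. You write $b=u_2u_3\cdots u_{n-1}$ with $u_i\in X_{\col(i)}$ and note that rows are altered from bottom to top, so that $D_{ic}=\beta_{ic}-\cK(i,c)$, where $\cK(i,c)$ involves only the main condition $A_{bc}$ and entries of $[A(\bbeta)]$ in rows $d<i$; by Equation~\ref{entries row a} those entries depend only on the variables of row $d$. You then conclude by counting with $|\cOrl_B|=q^\alpha$ from Lemma~\ref{Auts are iso}. Your explicit observation that the linear part is unitriangular with respect to the row ordering makes precise the paper's terser claim that independence of $\cK$ from $\beta_{ic}$ yields a bijection.
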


\begin{proof} 
Let $[B]=u.[A]$ for some $u\in U$. Then  
$
\cOrl_B
=
\{u.[A(\bbeta)]\mid\bbeta\in\cA\}
$
by
Lemma~\ref{left right intersection I}.

Fix $\bbeta=(\beta_{rs})\in\cA$ and set
$
[B(\bbeta)]=u.[A(\bbeta)].
$
Write
$
u=u_2u_3\cdots u_{n-1},
$
where $u_i\in X_{\col(i)}$ is uniquely determined. Set
$[A^n]=[A(\bbeta)]$ and, for $i=n-1,n-2,\ldots,2$, define
\[
[A^i]
=
u_i.[A^{i+1}]
=
u_i.u_{i+1}\cdots u_{n-1}.[A(\bbeta)]\in\cOl_A.
\]
Thus
$
[B(\bbeta)]
=
u.[A(\bbeta)]
=
[A^2].
$

By construction, the rows are changed from bottom to top, while the
submatrix $\Delta_i=\Delta_i[A(\bbeta)]$ lies entirely to the north
of row $i$. Hence the changes made in rows $i,i+1,\ldots,n-1$ do not
affect $\Delta_i$. Thus, comparing with \ref{use Delta3}, we have
\begin{equation}\label{use Delta4}
\Delta_i[A^a]
=
\Delta_i[A(\bbeta)]
=
\Delta_i
\qquad\text{for all }a\geq i.
\end{equation}

Acting by $u_i$ on $[A^{i+1}]$ changes only row $i$ of
$[A^{i+1}]$, and this row is not changed again by the subsequent
actions of $u_{i-1},\ldots,u_2$. Consequently,
\begin{equation}\label{use Delta5}
\fr_i(A^i)
=
\fr_i(A^{i-1})
=
\ldots
=
\fr_i(A^2)
=
\fr_i(u.[A(\bbeta)])
=
\fr_i(B(\bbeta)).
\end{equation}
On the other hand, the preceding actions of
$u_{n-1},\ldots,u_{i+1}$ do not change row $i$, and hence
\begin{equation}\label{use Delta6}
\fr_i(A^{i+1})
=
\fr_i(A^{i+2})
=
\ldots
=
\fr_i(A^{n-1})
=
\fr_i(A(\bbeta)).
\end{equation}
 
Now let $u_i = x_{1i}(\zeta_{1i})x_{2i}(\zeta_{2i})\cdots x_{i-1,i}(\zeta_{i-1,i})$ with $\zeta_{di}\in\F_q$ for $d=1,\ldots,i-1$ setting $\zeta_{di}=0$, if row $d$ is not contained in $J$. Then
\begin{equation}\label{use Delta7}
\fr_i(B(\bbeta)) = \fr_i(A^i)
 = \fr_i(A^{i+1}) - \sum_{d=1}^{i-1}\zeta_{di}\fr_d(\Delta_i[A(\bbeta)])
=\fr_i(A(\bbeta)) - \sum_{d=1}^{i-1}\zeta_{di}\fr_d(\Delta_i[A(\bbeta)])
\end{equation}
applying \ref{use Delta5}, \ref{use Delta6}, \ref{use Delta} and \ref{use Delta4}.

We are interested only in the entries $B(\bbeta)_{rs}$ at nonnormal
main hook intersections $(r,s)\in\hH^\fa$ of $[B(\bbeta)]$. Thus let
$
(i,c)=h_{ij}^a\cap h_{bc}^l
$
for some 
$
b<i<c<j<n,
$
as in Illustration~\ref{Deltamatrix}.

Thus $A(\bbeta)_{dc}\neq0$ implies either $d=b$ or
\[
(d,c)=h_{de}^a\cap h_{bc}^l
\in\hH^\fa\cup\hH^\fri
\]
is a nonnormal main hook intersection. In the latter case, $b<d<c<e\leq n$. Moreover, if $(d,c)\in\hH^\fri$, we define $\beta_{dc}=A(\bbeta)_{dc}$ and recall that Proposition~\ref{extspecial local} expresses $\beta_{dc}$ as a linear combination 
\begin{equation}\label{coeff dc}
\beta_{dc}
=
A_{dc}
+
\sum_{(d,s)\in\hH^\fa}
\gamma_{ds}(\beta_{ds}-A_{ds}),
\end{equation}
observing that $\bbeta\in\cA$ implies
$\beta_{ds}=A_{ds}$ for
$(d,s)\in\cH^\n\cup\cN^\fri$.
In particular, the coefficients $\gamma_{ds}$ in
\ref{coeff dc} depend only on the normal template $[A]$ we started
with.

Now \ref{use Delta7} and \ref{use Delta2} imply $\xi_{ic} = B(\bbeta)_{ic} = \beta_{ic} - \cK$
where
\begin{equation}\label{use Delta9}
\cK=\cK(i,c)= \sum_{d=b}^{i-1}\zeta_{di}A(\bbeta)_{dc}=
\zeta_{bi}A(\bbeta)_{bc}+
\sum_{(d,c)\in\hH^\fa\cup\hH^\fri} 
\zeta_{di}\beta_{dc}
\end{equation}

Observe that $\cK$ depends on the fixed $\zeta_{di}$, the
$\beta_{dc}$ for $d<i$, and the coefficients $\gamma_{ds}$ in
\ref{coeff dc}, all of which are independent of $\beta_{ic}$.
Consequently, $\xi_{ic}$ runs through $\F_q$ as $\beta_{ic}$ runs
through $\F_q$.

Applying this procedure to all main hook intersections
$(i,c)\in\hH^\fa$, proceeding from bottom to top along the main hook
arms, we obtain a vector
\begin{equation}\label{xxi}
\xxi(\bbeta) = \xxi = (\xi_{ic})\in\F_q^{\hH^\fa}
\end{equation}
such that $B(\bbeta)_{ic}=\xi_{ic}$ for all $(i,c)\in\hH^\fa$. Moreover, the correspondence 
\[
\F_q^{\hH^\fa}\longrightarrow\F_q^{\hH^\fa}:\bbeta\longmapsto\xxi
\]
is a bijection. Hence, for every prescribed
$\xxi=(\xi_{ic})\in\F_q^{\hH^\fa}$, we have constructed a unique
linear character $[B(\bbeta)]\in\cOrl_B$ satisfying $B(\bbeta)_{ic} = \xi_{ic}$ for all $(i,c)\in\hH^\fa$. In particular, these linear characters 
are pairwise distinct. Thus we obtain
$
q^{|\hH^\fa|}=q^\alpha
$  many, 
pairwise distinct linear characters in $\cOrl_B$. By
Lemma~\ref{Auts are iso}, these are all the elements of $\cOrl_B$. Finally, since $[B(\bbeta)]\in\cOr_B$, we have $\nker[B(\bbeta)] = \nker[B]$ by Remark~\ref{normalfixedfromright}.
\end{proof}

\begin{Remark}\label{nker not preserved}
Inspecting Equation~\ref{use Delta7}, we see that the action of $u$
on $[A(\bbeta)]$ in Proposition~\ref{premain 4} may also change
entries in normal columns. We have
\[
\nker[A(\bbeta)]=\nker[A]\quad\text{and}\quad \nker[B(\bbeta)]=\nker[B]
\]
for $[B(\bbeta)]=u.[A(\bbeta)]$ as above. However,
$\nker[B]$ may differ from $\nker[A]$.
\hfill$\square$
\end{Remark}

Proposition~\ref{premain 4} now immediately implies the following
main result:

\begin{Theorem}\label{main 4}
Let $[A]\in\V$ be a normal template. Define
\[
\fI_A
=
\{[B]\in\cOl_A\mid
B_{ts}=0\text{ for all }(t,s)\in\hH^\fa\}.
\]
Then
\[
\{\cOr_B\mid [B]\in\fI_A\}
\]
is the complete set of pairwise disjoint right orbits in $\cObi_A$.
\end{Theorem}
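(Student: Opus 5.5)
We must show two things: every right orbit in $\cObi_A$ contains some $[B]\in\fI_A$, and two distinct elements of $\fI_A$ lie in distinct right orbits. The plan is to reduce both to Proposition~\ref{premain 4}, using the decomposition $\cObi_A=\bigcup_{[B]\in\cOl_A}\cOr_B$ from \ref{biorbitdec}, and then to check the count against Remark~\ref{order hook inters}.

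\emph{Exhaustion.} Let $\cO$ be a right orbit in $\cObi_A$. By \ref{biorbitdec} we have $\cO=\cOr_{B'}$ for some $[B']\in\cOl_A$. Apply Proposition~\ref{premain 4} to $[B']$ with all $\xi_{ij}=0$. This gives $[C]\in\cOrl_{B'}$ with $C_{ij}=0$ for all $(i,j)\in\hH^\fa$. Since $[C]\in\cOl_{B'}=\cOl_A$, we get $[C]\in\fI_A$. Since $[C]\in\cOr_{B'}$, we get $\cOr_C=\cO$.

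\emph{Disjointness.} Suppose $[B],[B']\in\fI_A$ with $\cOr_B=\cOr_{B'}$. Then $[B']\in\cOr_B$. Also $[B']\in\cOl_A=\cOl_B$. Hence $[B']\in\cOrl_B$. Both $[B]$ and $[B']$ lie in $\cOrl_B$ and both have zero entries at all positions of $\hH^\fa$. The uniqueness part of Proposition~\ref{premain 4}, applied to $[B]$ with $\xi=0$, then forces $[B']=[B]$. The orbits $\cOr_B$, $[B]\in\fI_A$, are therefore pairwise distinct, hence pairwise disjoint as orbits, and they exhaust $\cObi_A$.

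\emph{Consistency check.} Proposition~\ref{premain 4} partitions $\cOl_A$ into the sets $\cOrl_B$, each of size $q^\alpha$, and each such set meets $\fI_A$ in exactly one element. So $|\fI_A|=q^{\alpha+\iota+\upsilon}/q^\alpha=q^{\iota+\upsilon}=\kappa$, which agrees with \ref{place count}.

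\emph{Main obstacle.} The only delicate point is the uniqueness in Proposition~\ref{premain 4}. It rests on the bijection $\bbeta\mapsto\xxi$ together with $|\cOrl_B|=q^\alpha$ from Lemma~\ref{Auts are iso}, and both are already established. The remaining subtlety is the condition $\nker[C]=\nker[B]$ in that proposition. It holds automatically for every element of $\cOrl_B$, by Remark~\ref{normalfixedfromright}, so it imposes no additional restriction.
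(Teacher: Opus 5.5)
Your proposal is correct and follows the paper's proof essentially step for step. Exhaustion comes from \ref{biorbitdec} together with Proposition~\ref{premain 4} applied with all $\xi_{ij}=0$, and distinctness comes from the uniqueness assertion of Proposition~\ref{premain 4}, with $\nker[B]=\nker[B']$ supplied automatically by Remark~\ref{normalfixedfromright}; your closing count $|\fI_A|=q^{\iota+\upsilon}=\kappa$ is the same computation the paper carries out right after the theorem.
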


\begin{proof}
Let $[B],[C]\in\fI_A$. Then
$\cOl_B=\cOl_A=\cOl_C$. Suppose that $\cOr_B=\cOr_C$. Since
$[C]\in\cOl_B$ and $[C]\in\cOr_C=\cOr_B$, we have
$[C]\in\cOrl_B$. By \ref{normalfixedfromright},
$
\nker[B]=\nker[C].
$
Moreover, $B$ and $C$ both vanish at all positions
$(t,s)\in\hH^\fa$. Hence the uniqueness assertion of
Proposition~\ref{premain 4} implies that $[B]=[C]$. Thus distinct
elements of $\fI_A$ generate distinct right orbits.

It remains to show that every right orbit in $\cObi_A$ is obtained
in this way. By \ref{biorbitdec}, every such right orbit is of the
form $\cOr_D$ for some $[D]\in\cOl_A$. Applying
Proposition~\ref{premain 4} to $[D]$, with
$\xi_{ij}=0$ for all $(i,j)\in\hH^\fa$, yields a unique
$[B]\in\cOrl_D$ such that
$
B_{ij}=0
\text{ for all }(i,j)\in\hH^\fa.
$
Thus $[B]\in\fI_A$, and since $[B]\in\cOr_D$, we have
$\cOr_B=\cOr_D$. Hence every right orbit in $\cObi_A$ has a
representative in $\fI_A$. Together with the uniqueness established
above, this proves the theorem.
\end{proof}

We can now count the right orbits contained in $\cObi_A$ directly.
Set
\[
\omega=|\bfw|,\qquad
\sigma=|\cH^\n|,\qquad
\iota=|\hH^\fri|=|\cN^\fri|,
\qquad
\alpha=|\hH^\fa|.
\]
By \ref{place count} and \ref{w and y}, we have
$
|\cOl_A|
=
q^{\omega+\sigma+\iota+\alpha}.
$
Since $|\cOrl_A|=q^\alpha$ by Theorem~\ref{main1}, it follows that
\[
|\fI_A|
=
q^{\omega+\sigma+\iota}
=
q^{\upsilon+\iota}
=
\kappa,
\]
where, as before, $\upsilon=|\bfy|$ and $\kappa$ denotes the number
of one-sided orbits contained in $\cObi_A$; here we have used
\ref{w and y} and part~(3) of \ref{gen orbit facts}.

Moreover, as in the proof of Theorem~\ref{main1}, the uniqueness
assertion of Theorem~\ref{main 4} yields the following left-hand
analogue of \ref{normalenough}:

\begin{Cor}\label{w cond rev} Let $[B],[C]\in\cOl_A$ with $B_{ts}=C_{ts}$ for all $(t,s)\in\hH^\fa$ and $\nker[B]=\nker[C]$. Suppose that $B_{ij}\neq C_{ij}$ for some $(i,j)\in\bfw$. Then $\cOr_B\cap\cOr_C = \emptyset$.
\hfill$\square$\end{Cor}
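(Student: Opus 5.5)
We argue by contradiction, using the uniqueness part of Proposition~\ref{premain 4} in the same way as in the proof of Theorem~\ref{main 4}. Suppose that $\cOr_B\cap\cOr_C\neq\emptyset$. Then $\cOr_B=\cOr_C$, and since $[C]\in\cOl_A=\cOl_B$, we obtain $[C]\in\cOr_B\cap\cOl_B=\cOrl_B$.

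First we apply Proposition~\ref{premain 4} to $[B]\in\cOl_A$, with the prescribed values $\xi_{ts}=B_{ts}$ for $(t,s)\in\hH^\fa$. This yields a unique $[D]\in\cOrl_B$ with $\nker[D]=\nker[B]$ and $D_{ts}=B_{ts}$ on $\hH^\fa$. The character $[B]$ itself satisfies both conditions, since $[B]\in\cOrl_B$ trivially. Hence $[D]=[B]$.

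Next we check that $[C]$ satisfies the same two conditions. It lies in $\cOrl_B$, as shown above. By hypothesis, $\nker[C]=\nker[B]$ and $C_{ts}=B_{ts}$ for all $(t,s)\in\hH^\fa$. Uniqueness then forces $[C]=[D]=[B]$, so $C_{ij}=B_{ij}$ at every position. This contradicts the assumption that $B_{ij}\neq C_{ij}$ for some $(i,j)\in\bfw$, and hence $\cOr_B\cap\cOr_C=\emptyset$.

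The main point to verify is that the normal-kernel hypothesis is applied exactly as Proposition~\ref{premain 4} requires. That proposition only asserts that its elements share the normal kernel of $[B]$, and the condition $\nker[C]=\nker[B]$ here is a hypothesis, not a consequence. The uniqueness we use is genuine because $|\cOrl_B|=q^\alpha$ by Lemma~\ref{Auts are iso}, and the $\hH^\fa$-entries already separate these $q^\alpha$ elements. The argument never uses that $(i,j)\in\bfw$ specifically. So the corollary holds for a difference at any position; restricting it to $w$-conditions only identifies which free entries produce distinct right orbits, in analogy with the $y$-conditions in Proposition~\ref{normalenough}.
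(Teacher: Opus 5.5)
Your proof is correct and follows the paper's route: the paper also derives the corollary from the uniqueness statement of Proposition~\ref{premain 4}, exactly as in the first paragraph of the proof of Theorem~\ref{main 4}. One small aside: under your contradiction hypothesis $[C]\in\cOr_B$, so $\nker[C]=\nker[B]$ already follows from Remark~\ref{normalfixedfromright}. Contrary to your closing remark, that hypothesis is therefore a consequence after all, and like the restriction to $\bfw$ it is not actually needed.
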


Finally, we collect our main results into the following summary:

\begin{Summary}\label{summ} Let $[A]\in\V$ be a normal template. Keep the notation introduced in the preceding sections. In particular, recall from \ref{Hook inters} the notation for the various types of main hook intersections  and normal supplementary conditions.

Combining Propositions \ref{placebij} and \ref{rplacebij} with Theorem \ref{extspecial global} we conclude that for each choice of 
$\bbeta=(\beta_{ij})\in\F_q^{\cH^\bfy}$ there exists a unique special linear character $[S]=[S(\bbeta)]\in\cObi_A$ with 
$\main[A]=\main[S]$ 
 such that $S_{ij}=\beta_{ij}$ for $(i,j)\in\cH$ and $S_{ij}$ is determined by Equation \ref{entries row a} for $(i,j)\in\hH^\fri\cup\bN$. By specializing 
$\bbeta$, fixing its entries on various parts of $\cH = \cH^\n\cup\cN^\fri\cup\hH^\fa\cup\bfy$ we obtain our main results on the coadjoint orbits of $\V$.
\begin{enumerate}
\item[1)] The main hook intersections $\hH^\fa$ parametrize the intersection 
$\cOrl_A$. Indeed, setting $\beta_{rs} = 0$ for $(r,s)\in\cH^\n\cup\bfy$, $\beta_{rs}=A_{rs}$ for 
$(r,s)\in\cN^\fri$, and choosing
$\beta_{rs}\in\F_q$ arbitrary for $(r,s)\in\hH^\fa$, we obtain
precisely the elements of $\cOrl_A$. Thus $\fE^\fa$ determines bases of 
$\End_{\C U}(\C\cOr_A)$ and $\End_{\C U}(\C\cOl_A)$; compare with Theorem \ref{main1}.
\item[2)] The normal suplementary conditions in $\cN^\fri$ together with the $y$-conditions $\bfy$ parametrize the left orbits contained in $\cObi_A$. Indeed, setting
\[
\beta_{rs}=0\text{  for all } (r,s)\in\cH^\n\cup\hH^\fa\cup\bfy,
\]
and choosing $\beta_{rs}\in\F_q$ arbitrarily for
$(r,s)\in\cN^\fri$, clearing nonzero entries from nonnormal hook intersections, and filling $y$-conditions entreis from $\F_q$ we obtain all templates in $\cObi_A$. These generate the left orbits in $\cObi_A$; compare with \ref{main3}. 
\item[3)]  The right orbits contained in $\cObi_A$ are parametrized by
the linear characters in $\cOl_A$ which vanish at all nonnormal main
hook intersections in $\hH^\fa$ by Theorem \ref{main 4}.
\end{enumerate} 
Moreover, we determine necessary and sufficient conditions for $\C\cO_A$ for being irreducible.
\hfill$\square$\end{Summary}

Using the mirror map \ref{mirror}, one immediately obtains the
corresponding classification of the left $U_{\bar J}$-orbits on
$\V_{\bar J}$ by decomposing the biorbits. Thus the classification
for column closed pattern subgroups of $U_n(q)$ follows as well.
Since this amounts only to translating the results obtained here
via the mirror map, we leave the details to the reader.

\section*{Acknowledgement}This work was supported by the National Natural Science Foundation of China (Grant No. 11601338), and the first author was grateful to the Institute of Algebra and Number Theory at University of Stuttgart  for hosting her research visit, where part of this work was carried out.

\providecommand{\bysame}{\leavevmode ---\ }
\providecommand{\og}{``} \providecommand{\fg}{''}
\providecommand{\smfandname}{and}
\providecommand{\smfedsname}{\'eds.}
\providecommand{\smfedname}{\'ed.}
\providecommand{\smfmastersthesisname}{M\'emoire}
\providecommand{\smfphdthesisname}{Th\`ese}

\end{document}